\definecolor{dgreen}{rgb}{0,.8,.3}
\definecolor{lblue}{rgb}{.2,.3,.7}
\newtheorem{assumption}{Assumption}
\newtheorem{theorem}{Theorem}
\newtheorem{lemma}{Lemma}
\newtheorem{proposition}{Proposition}
\numberwithin{equation}{section}
\numberwithin{lemma}{section}
\numberwithin{theorem}{section}
\newcommand{\beq}{\begin{equation}}
\newcommand{\eeq}{\end{equation}}
\newcommand{\beqa}{\begin{eqnarray}}
\newcommand{\eeqa}{\end{eqnarray}}
\newcommand{\beqas}{\begin{eqnarray*}}
\newcommand{\eeqas}{\end{eqnarray*}}
\newcommand{\ba}{\begin{array}}
\newcommand{\ea}{\end{array}}
\newcommand{\bi}{\begin{itemize}}
\newcommand{\ei}{\end{itemize}}
\newcommand{\gap}{\hspace*{2em}}
\newcommand{\sgap}{\hspace*{1em}}
\newcommand{\nn}{\nonumber}
\def\arg{{\rm arg}}
\def\Arg{{\rm Arg}}
\def\bA{{\bar A}}
\def\bb{{\bar b}}
\def\bF{{\bar F}}
\def\c{{\rm c}}
\def\cC{{\cal C}}
\def\cg{{\rm cg}}
\def\cI{{\cal I}}
\def\cH{{\cal H}}
\def\cK{{\cal K}}
\def\cP{{\cal P}}
\def\cS{{\cal S}}
\def\dist{{\rm dist}}
\def\eps{{\epsilon}}
\def\bcH{{\bar \cH}}
\def\hA{{\hat A}}
\def\hb{{\hat b}}
\def\hell{{\hat \ell}}
\def\heps{{\hat \epsilon}}
\def\heta{{\hat \eta}}
\def\hf{{\hat f}}
\def\hF{{\hat F}}
\def\hp{{\hat p}}
\def\hr{{\hat r}}
\def\hcH{{\hat \cH}}
\def\hx{{\hat x}}
\def\low{{\rm low}}
\def\range{{\rm Range}}
\def\rank{{\rm rank}}
\def\sgn{{\rm sgn}}
\def\Span{{\rm span}}
\def\tc{{\rm tc}}
\def\tF{{\tilde F}}
\def\tk{{k}}
\def\tx{{\tilde x}}
\def\CG{{\rm GCG}}
\def\TPCG{{\rm TPCG}}
\def\v{{\rm v}}
\def\vp{{v^{\rm p}}}
\def\bcH{{\bar \cH}}
\def\L{{\cal L}}
\def\cK{{\cal K}}
\title{Generalized Conjugate Gradient Methods for $\ell_1$ Regularized
Convex Quadratic Programming with Finite Convergence}
\author{
\ Zhaosong Lu\thanks{Department of Mathematics, Simon Fraser University, Canada. Email: {\tt zhaosong@sfu.ca}. This author's work was supported in part by NSERC.} \ and Xiaojun Chen\thanks{Department of Applied Mathematics, The Hong Kong Polytechnic University, Hung Hom, Kowloon, Hong Kong. Email: {\tt maxjchen@polyu.edu.hk}. The author's work was supported in part by Hong Kong Research Council Grant PolyU153001/14p.}}
\date{November 24, 2015 (Revised: February 12, 2016)}
\begin{document}

\maketitle

\begin{abstract}
The conjugate gradient (CG) method is an efficient iterative method for
solving large-scale strongly convex quadratic programming (QP). In this paper we  
 propose some generalized CG (GCG) methods for solving the $\ell_1$-regularized 
(possibly not strongly) convex QP that terminate at an optimal solution in a finite 
number of iterations. At each iteration, our methods first identify a face of an orthant and then either perform an exact line search along the direction of the negative projected minimum-norm subgradient of the objective function or execute a CG subroutine that conducts a sequence of CG iterations until a CG iterate crosses the boundary of this face or an approximate minimizer of over this face or a subface is found.  We determine which type of step should be taken by comparing the magnitude of some components of the minimum-norm subgradient of  the objective function to that of its rest components.  Our analysis on finite convergence of these methods makes use of an error bound result and some key properties of the aforementioned exact line search and the CG subroutine. We also show that the proposed methods are capable of finding an approximate solution of the problem by allowing some inexactness on the execution of the CG subroutine. The overall arithmetic operation cost of our GCG methods for finding an $\eps$-optimal solution depends on $\eps$ in $O(\log(1/\eps))$, which is superior to the accelerated proximal gradient method \cite{Beck,Nest13} that depends on $\eps$ in $O(1/\sqrt{\eps})$. In addition, our GCG methods can be extended straightforwardly to solve box-constrained convex QP with finite convergence. Numerical results demonstrate that our methods are very favorable for solving ill-conditioned problems. 

\end{abstract}

\vskip14pt
\noindent {\bf Keywords}: conjugate gradient method, convex quadratic programming, $\ell_1$-regularization, sparse optimization, finite convergence

\vskip14pt
\noindent {\bf AMS subject classifications}: 65C60, 65K05, 65Y20, 90C06, 90C20, 90C25

\section{Introduction}
\label{intro}

The conjugate gradient (CG) method is an efficient numerical method for solving {\it strongly} convex quadratic programming (QP) in the form of 
\beq \label{CG-qp}
\min\limits_{x\in\Re^n} \frac12 x^T B x - c^T x,
\eeq
or equivalently, the linear system $B x=c$,  where $B \in \Re^{n\times n}$ is a symmetric {\it positive definite} matrix and $c\in \Re^n$. It terminates at  the unique      
 optimal solution of \eqref{CG-qp} in a finite number of iterations. Moreover, it is suitable for solving large-scale problems since it only requires 
matrix-vector multiplications per iteration (e.g., see \cite{NoWr06} for details). The CG method has also been generalized to minimize a convex quadratic function over a box or a ball (e.g., see \cite{Dostal1,Dostal2,Oleary,Steihaug,Toint}).

In this paper we are interested in generalizing the CG method to solve the $\ell_1$ regularized convex QP:
\beq \label{l1-qp}
F^* = \min\limits_{x\in\Re^n} F(x) := \frac12 x^T A x - b^T x + \tau \|x\|_1,
\eeq
where $A\in\Re^{n\times n}$ is a symmetric {\it positive semidefinite} matrix, $b\in \Re^n$ and $\tau \ge 0$ is a regularized parameter. Throughout this paper we make the following assumption for problem \eqref{l1-qp}.
\begin{assumption} \label{assump}
The set of optimal solutions of problem \eqref{l1-qp}, denoted by $\cS^*$, is nonempty. \footnote{Since the objective function of
\eqref{l1-qp} is a convex piecewise quadratic function, problem \eqref{l1-qp} has at least an
optimal solution if and only if its objective function is bounded below.}
\end{assumption}

Over the last decade, a great deal of attention has been focused on problem \eqref{l1-qp} due to numerous applications in image sciences, machine learning, signal processing and statistics (e.g., see  \cite{ChDoSa98,HaTiFr01,BDE,Gold,WrNoFi09,SrNoWr11} and the references therein). Considerable effort has been devoted to developing efficient algorithms for solving \eqref{l1-qp} (e.g., see \cite{Beck,Nest13,WrNoFi09,Toh,HaYiZh07,JYang,XiZh13,Ulbrich}). These methods are iterative methods and capable of producing an approximate solution to \eqref{l1-qp}. Nevertheless, they generally cannot terminate at an 
optimal solution of \eqref{l1-qp}.  Recently, Byrd et al.\ \cite{Byrd} proposed a method called iiCG to solve \eqref{l1-qp} that combines the iterative soft-thresholding algorithm  (ISTA) \cite{Beck,Dono95,WrNoFi09} with the CG method. Under the assumption that $A$ is symmetric positive definite, it was shown in \cite{Byrd} that the sequence generated by iiCG converges to the unique optimal solution of \eqref{l1-qp}, and if additionally this solution satisfies strict complementarity, iiCG terminates in a finite number of iterations. Its convergence is, however, unknown when $A$ is positive 
semidefinite (but not definite), which is typical for many instances of \eqref{l1-qp} arising in applications. 

In this paper we propose some generalized CG ($\CG$) methods for solving \eqref{l1-qp}  that terminate at an optimal solution of \eqref{l1-qp} in a finite number of iterations with no additional assumption. At each iteration, 
our methods first identify a certain face of some orthant and then either perform an exact line search along the direction of the negative projected minimum-norm subgradient of $F$ or execute a CG  subroutine that conducts a sequence of CG iterations until a CG iteration crosses the boundary of this face or an approximate minimizer of $F$ over this face or a subface is found. The purpose of the exact line search step is to release some zero components of the current iterate so that the value of $F$ is sufficiently reduced. The aim of executing a CG routine is to update the nonzero components of the current iterate, which also results in a reduction on $F$.  We determine which type of step should be taken by comparing the magnitude of some components of the minimum-norm subgradient of $F$ to that of its rest components.  Our methods are 
substantially different from the iiCG method \cite{Byrd}. In fact, at each iteration,  iiCG either performs a {\it proximal gradient} step or executes a {\it single} CG iteration. It determines  which type of step should be conducted by comparing the magnitude of some components of a {\it proximal gradient} of $F$ to that of its rest components. 

In order to analyze the convergence of our GCG methods, we establish some error bound results for problem \eqref{l1-qp}. We also conduct some exclusive 
analysis on the aforementioned exact line search and the CG subroutine. Using these results, we show that the $\CG$ methods terminate at an optimal solution of \eqref{l1-qp} in a finite number of iterations. To the best of our knowledge, the GCG methods are the first methods for solving (\ref{l1-qp}) with finite convergence.  We also show that our methods are capable of finding an approximate solution of \eqref{l1-qp}  by allowing some inexactness on the execution of the CG subroutine. 
The overall arithmetic operation cost of our GCG methods for finding an $\eps$-optimal solution depends on $\eps$ in $O(\log(1/\eps))$, which is superior to the accelerated proximal gradient method \cite{Beck,Nest13} that depends on $\eps$ in $O(1/\sqrt{\eps})$. In addition, it shall be mentioned that these methods can be extended 
to solve the following box-constrained convex QP with finite convergence:
\beq \label{bQP}
\min\limits_{l \le x \le u} \frac12 x^T A x - b^T x, 
\eeq
where $A\in\Re^{n\times n}$ is symmetric positive semidefinite, $b\in \Re^n$, $l,u\in\bar\Re^n$ with $\bar\Re=[-\infty,\infty]$. As for finite convergence, 
the existing CG type methods \cite{Dostal1,Dostal2} for \eqref{bQP}, however, require that $A$ be symmetric positive definite. The extension of our methods to problem \eqref{bQP} is not included in this paper 
due to the length limitation.

The rest of the paper is organized as follows. In Section \ref{err-bdds}, we establish some results on error bound for problem \eqref{l1-qp}. In Section \ref{algorithm},  we propose several $\CG$ methods for solving problem \eqref{l1-qp} and establish 
their finite convergence. In Section \ref{l1-ls-prob}, we discuss the application of our  $\CG$ methods to solve the $\ell_1$ regularized least-squares problems and develop a practical termination criterion for them.  We conduct numerical experiments in Section \ref{result} to compare the performance of our $\CG$ methods with some 
state-of-the-art algorithms for solving problem \eqref{l1-qp}. 
In Section \ref{conclude} we present some concluding remarks. Finally, in the appendix we study some convergence properties of the standard CG method for solving (possibly not strongly) convex QP. 
 
\subsection{Notation and terminology} \label{notation}

For a nonzero symmetric positive semidefinite matrix $A$, we define a generalized condition number 
of $A$ as
\beq \label{kappa}
\kappa(A) = \|A\|\|A^+\| = \frac{\lambda_{\max}(A)}{\lambda^+_{\min}(A)},
\eeq
where $A^+$ is the Moore-Penrose pseudoinverse of $A$, $\lambda_{\max}(A)$ is the largest eigenvalue of $A$
and $\lambda^+_{\min}(A)$ is the smallest {\it positive} eigenvalue of $A$. Clearly, it reduces to 
the standard condition number when $A$ is symmetric positive definite. In addition, for any index set $J\in\{1,\ldots,n\}$, $|J|$ is the cardinality of $J$ and $A_{JJ}$ is the submatrix of $A$ formed by its rows and columns indexed by $J$. Analogously, $b_J$ is the subvector of $b\in\Re^n$ formed by its components indexed by $J$. In addition, the range space and rank of a matrix $B$ are denoted by $\range(B)$ and $\rank(B)$, 
respectively. 

Let $\sgn: \Re^n \to \{-1,0,1\}^n$ be  the standard sign operator, which is conventionally defined as follows
\[
[\sgn(x)]_i = \left\{
\ba{ll}
1 & \mbox{if} \ x_i >0; \\
0 & \mbox{if} \ x_i =0; \\
-1 & \mbox{if} \ x_i <0,
\ea
\right. \quad i=1,\dots. n.
\] 
Let $F$ be defined in \eqref{l1-qp} and 
\beq \label{f}
f(x) = \frac12 x^T A x - b^T x.
\eeq 
Let $v(x)$ be the {\it minimum-norm subgradient} of $F$ at $x$, which is 
the projection of the zero vector onto the subdifferential of $F$ at $x$. It follows that
\beq \label{v}
v_i(x) = \left\{
\ba{ll}
\nabla_i f(x)+ \tau \ \sgn(x_i) & \ \mbox{if} \ x_i \neq 0; \\ [5pt]
\min\left(\nabla_i f(x) + \tau, \max(0, \nabla_i f(x)-\tau)\right) &  \ \mbox{if} \ x_i=0,
\ea
\right.  \quad i=1,\ldots, n,
\eeq
where $\nabla_i f(x)$ denotes the $i$th partial derivative of $f$ at $x$. 
It is known that $x$ is an optimal solution of problem \eqref{l1-qp} if and only if $0\in \partial F(x)$, where $\partial F$ denotes the subdifferential of $F$.  Since $0\in \partial F(x)$
is equivalent to $v(x)=0$, $x$ is an optimal solution of \eqref{l1-qp} if and only if $v(x)=0$. 

For any $x\in\Re^n$, we define
\beq \label{index-I}
\ba{l}
I_-(x) = \{i: x_i<0\},  \quad  I_+(x) = \{i: x_i>0\}, \\ [4pt]
 I_0(x) = \{i: x_i=0\}, \quad  I^\c_0(x) =  \{i: x_i \neq 0\}, 
\ea
\eeq
and also define
\beq \label{Fxs}
\cH(x) = \{y\in\Re^n:  y_i=0, \ i\in I_0(x)\}, \quad\quad F^*_x = \min\{F(y): y \in \cH(x)\}.
\eeq
 In addition, given any closed set $S\subseteq \Re^n$, ${\rm dist}(x,S)$ denotes the distance from $x$ to $S$, and $\cP_S(x)$ denotes the projection of $x$ to $S$. Finally, we define 
\beq
\cI^* = \{J\subseteq I_0(x^*): x^*\in \cS^*\},  \quad \quad \L(n) = \max\left\{\ell: 
\cC_i \notin \cI^*, \ i=1,\ldots, \ell,  \ \mbox{are distinct subsets in} \  \{1,\ldots,n\}  
\right\}+1.\label{L}
\eeq


%
\section{Error bound results}
\label{err-bdds}

In this section we develop some error bound results for problem \eqref{l1-qp}.  
To proceed, let $\cS(\delta) := \{x: F(x)-F^* \le \delta\}$ for any $\delta \ge 0$, where  $F$ and $F^*$ are defined in \eqref{l1-qp}. We first bound the gap between $F(x)$ and $F^*$ by $\|v(x)\|$ for all $x\in \cS(\delta)$.

\begin{theorem}  \label{errbdd-F}
Let $F$, $F^*$ and $v$ be defined in \eqref{l1-qp} and \eqref{v}, respectively.  Then for any $\delta \ge 0$, there exists some 
$\eta>0$ (depending on $\delta$) such that 
\[
F(x) - F^* \le \eta \|v(x)\|^2, \quad\quad \forall x  \in \cS(\delta).
\] 
\end{theorem}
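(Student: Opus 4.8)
The plan is to reduce the claimed objective-gap bound to a Luo--Tseng type distance error bound for the solution set, and then close the argument by convexity. The easy half is the passage from the gap to the distance: since $v(x)$ is by definition a subgradient of $F$ at $x$, the subgradient inequality gives, for any optimal $x^*$, that $F^*=F(x^*)\ge F(x)+v(x)^T(x^*-x)$, hence $F(x)-F^*\le v(x)^T(x-x^*)\le \|v(x)\|\,\|x-x^*\|$. Choosing $x^*=\cP_{\cS^*}(x)$ yields
\[
F(x)-F^* \le \|v(x)\|\,\dist(x,\cS^*),
\]
so it suffices to establish a local error bound $\dist(x,\cS^*)\le c\,\|v(x)\|$ with $c$ uniform over the relevant region and then substitute.

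Next I would record the polyhedrality of $\cS^*$. Fixing $x^*\in\cS^*$ and setting $\bar g:=Ax^*-b$, one checks $\bar g$ is common to all optimal solutions: if $x^*,y^*\in\cS^*$ then $F$ is constant on the joining segment, and since each of $f$ and $\tau\|\cdot\|_1$ is convex while their sum is affine there, each summand is affine on the segment; affineness of $f$ forces $(y^*-x^*)^TA(y^*-x^*)=0$, so $A(y^*-x^*)=0$ and $\nabla f(y^*)=\nabla f(x^*)=\bar g$. As $|\bar g_i|\le\tau$ at optimality, the indices split into $P=\{i:\bar g_i=-\tau\}$, $N=\{i:\bar g_i=\tau\}$ and $Z=\{i:|\bar g_i|<\tau\}$, and the optimality conditions identify
\[
\cS^* = \{x:\ Ax=Ax^*,\ x_i\ge 0\ (i\in P),\ x_i\le 0\ (i\in N),\ x_i=0\ (i\in Z)\},
\]
a polyhedron.

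The main obstacle is the error bound $\dist(x,\cS^*)\le c\|v(x)\|$, valid for $x\in\cS(\delta)$ with $\|v(x)\|$ below a fixed threshold. By Hoffman's lemma applied to the polyhedral description above, $\dist(x,\cS^*)$ is bounded by a constant multiple of the constraint residuals: $\|A(x-x^*)\|$ together with the sign/support violations $(x_i)_-$ $(i\in P)$, $(x_i)_+$ $(i\in N)$ and $|x_i|$ $(i\in Z)$. The delicate point — and the reason positive semidefiniteness rather than definiteness of $A$ matters — is bounding $\|A(x-x^*)\|$ by $\|v(x)\|$. Writing $A=E^TE$, one checks that $Ex^*$ takes a common value $\bar z$ over $\cS^*$ (because $Ax^*=\bar g+b$ is constant and $E^T$ is injective on $\range(E)$), after which the Luo--Tseng error bound for minimizing the strongly convex composite $\frac12\|Ex\|^2-b^Tx$ plus the polyhedral term $\tau\|x\|_1$ controls $\|Ex-\bar z\|$, and hence $\|A(x-x^*)\|=\|E^T(Ex-\bar z)\|$, by $\|v(x)\|$. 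With $\nabla f(x)$ thereby forced close to $\bar g$, the explicit formula \eqref{v} shows each sign/support violation is itself of order $\|v(x)\|$: a nonzero $x_i$ with $i\in Z$, or a wrong-signed $x_i$ with $i\in P\cup N$, would otherwise make $|v_i(x)|$ bounded away from zero. This is the Luo--Tseng mechanism adapted to the minimum-norm subgradient residual.

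Finally I would assemble the bound on all of $\cS(\delta)$ by splitting into two regimes. If $\|v(x)\|\ge t_0$ for the fixed threshold $t_0>0$, the trivial estimate $F(x)-F^*\le\delta\le(\delta/t_0^2)\|v(x)\|^2$ holds; if $\|v(x)\|<t_0$, the error bound of the previous step combines with the convexity inequality to give $F(x)-F^*\le c\|v(x)\|^2$. Taking $\eta=\max\{c,\delta/t_0^2\}$ — which depends on $\delta$ precisely through this split — proves the theorem. Everything outside the third step is routine; the semidefinite error bound there is the genuinely hard ingredient.
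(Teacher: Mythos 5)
Your proposal is correct in outline, but it takes a genuinely different route from the paper's, which is much shorter: the paper simply quotes Li's error bound for convex piecewise quadratic programs \cite{Li95} to get $\dist(x,X^*)\le\sqrt{\eta}\,\sqrt{F(x)-F^*}$ on $\cS(\delta)$, feeds this into the very subgradient inequality you start from, $F(x)-F^*\le\|v(x)\|\,\dist(x,X^*)$, and lets the factor $\sqrt{F(x)-F^*}$ cancel --- no threshold split, no structural analysis of $\cS^*$. You instead aim for the stronger Lipschitzian (Luo--Tseng) bound $\dist(x,\cS^*)\le c\,\|v(x)\|$ in the small-residual regime, assembled from the polyhedral characterization of $\cS^*$ (your argument that $\nabla f$, and hence $Ex$, is constant over $\cS^*$ is sound, modulo the degenerate case $\tau=0$, where $P$ and $N$ coincide and one should instead use $\cS^*=\{x:Ax=b\}$ directly, for which Lemma~\ref{gap-relation} of the paper already gives a global bound), Hoffman's lemma, and a large-/small-residual split, which correctly accounts for the $\delta$-dependence of $\eta$. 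Two caveats on what this costs. First, the one genuinely hard ingredient --- bounding $\|Ex-\bar z\|$ by the residual when $A$ is merely positive semidefinite --- is precisely the kernel of the Luo--Tseng error bound, and your sketch invokes it rather than proves it; that is legitimate as a citation (the class ``strongly convex function of $Ex$ plus linear term plus polyhedral regularizer'' is exactly the setting of Luo and Tseng, 1992), and is structurally parallel to the paper citing Li, but you should state explicitly that this step is quoted, since your Hoffman wrapper otherwise reads as self-contained when it is not. Second, the literature states that bound for the proximal residual $r(x)=x-\mathrm{prox}_{\tau\|\cdot\|_1}\bigl(x-\nabla f(x)\bigr)$, not for the minimum-norm subgradient $v(x)$; the transfer is a one-liner --- since $x=\mathrm{prox}_{\tau\|\cdot\|_1}(x+u)$ for any $u\in\tau\partial\|x\|_1$, nonexpansiveness of the prox gives $\|r(x)\|\le\|\nabla f(x)+u\|$, and minimizing over $u$ yields $\|r(x)\|\le\|v(x)\|$ --- but your phrase ``adapted to the minimum-norm subgradient residual'' elides it and it should be written out. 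What your heavier route buys is the intermediate bound $\dist(x,\cS^*)\le c\,\|v(x)\|$, strictly more information than the theorem asks for; the paper's route buys brevity, needing only the weaker square-root error bound plus the cancellation trick.
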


\begin{proof}
Let $X^*$ denote the set of optimal solutions of \eqref{l1-qp}. Notice that $F$ is a convex 
piecewise quadratic function. By \cite[Theorem 2.7]{Li95}, there exists some $\eta>0$ such 
that 
\beq \label{err-bdd-F}
\dist(x,X^*) \le \ \sqrt{\eta} \sqrt{F(x)-F^*}, \quad\quad \forall x  \in \cS(\delta).
\eeq
Let $x^*\in X^*$ be such that $\|x-x^*\|=\dist(x,X^*)$. By $v(x) \in \partial F(x)$ and the convexity of $F$, one has 
\[
 F(x) - F^* = F(x)-F(x^*) \le  \langle v(x), x -x^* \rangle \le \|v(x)\| \|x-x^*\| = \|v(x)\| \dist(x,X^*),
\] 
which together with \eqref{err-bdd-F} implies that the conclusion holds.
\end{proof}

\gap

We next bound the gap between $F(x)$ and $F^*_x$ by the magnitude of some components of $v(x)$ for all $x\in \cS(\delta)$.

\begin{theorem}\label{errbdd-Fx}
Let $F$ and $F^*_x$ be defined in \eqref{l1-qp} and \eqref{Fxs}, respectively. Then for any $\delta \ge 0$, 
there exists some $\hat \eta>0$ (depending on $\delta$) such that 
\[
F(x) - F^*_x \le \hat \eta \|[v(x)]_J\|^2, \quad\quad \forall x \in \cS(\delta),
\]
where $J=I^\c_0(x)$.
\end{theorem}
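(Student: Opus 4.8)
The plan is to reduce the statement to Theorem~\ref{errbdd-F} applied to the lower-dimensional $\ell_1$-regularized QP obtained by restricting $F$ to the face $\cH(x)$. Fix an index set $J\subseteq\{1,\ldots,n\}$ and introduce
\[
G_J(z)=\frac12 z^T A_{JJ} z - b_J^T z + \tau\|z\|_1 \quad (z\in\Re^{|J|}),\qquad G_J^*=\min_z G_J(z).
\]
Since $A_{JJ}$ is a principal submatrix of the symmetric positive semidefinite matrix $A$, it is itself symmetric positive semidefinite, so $G_J$ is an instance of \eqref{l1-qp}. Moreover $G_J$ coincides with $F$ on the subspace $\{y:y_i=0,\ i\notin J\}$, on which $F$ is bounded below by $F^*$; hence $G_J$ is a convex piecewise quadratic function that is bounded below and, by the footnote to Assumption~\ref{assump}, attains its minimum. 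Therefore Theorem~\ref{errbdd-F} applies to $G_J$, producing for each $\delta\ge 0$ a constant $\hat\eta_J>0$ with $G_J(z)-G_J^*\le \hat\eta_J\|w_J(z)\|^2$ whenever $G_J(z)-G_J^*\le\delta$, where $w_J$ denotes the minimum-norm subgradient of $G_J$.

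Next I would record the identities linking the two problems for a point $x$ with $J=I^\c_0(x)$. Because every component of $x_J$ is nonzero while the components of $x$ outside $J$ vanish, one checks directly that $F(x)=G_J(x_J)$ and, since $\cH(x)$ is precisely the subspace on which $G_J$ is defined, $F^*_x=G_J^*$. For the subgradients, formula \eqref{v} gives $v_i(x)=\nabla_i f(x)+\tau\,\sgn(x_i)$ for $i\in J$, and a short computation shows $\nabla_i f(x)=(A_{JJ}x_J-b_J)_i$ and $\sgn(x_i)=\sgn((x_J)_i)$ for such $i$; hence $[v(x)]_J$ equals $w_J(x_J)$, the minimum-norm subgradient of $G_J$ at $x_J$.

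Finally I would verify that $x_J$ lies in the relevant sublevel set and assemble the constant. If $x\in\cS(\delta)$, then using $F^*_x\ge F^*$ we get $G_J(x_J)-G_J^*=F(x)-F^*_x\le F(x)-F^*\le\delta$, so the bound from Theorem~\ref{errbdd-F} applies at $z=x_J$ and yields $F(x)-F^*_x\le\hat\eta_J\|[v(x)]_J\|^2$. Setting $\hat\eta=\max_{J\subseteq\{1,\ldots,n\}}\hat\eta_J$, a finite maximum since there are finitely many index sets, gives a single constant valid for all $x\in\cS(\delta)$; the case $J=\emptyset$, i.e.\ $x=0$, is trivial since then $F^*_x=F(x)$. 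The main obstacle is precisely that the constant delivered by Theorem~\ref{errbdd-F} depends on the active face $J$, which varies with $x$; the resolution is the finiteness of the collection of possible index sets, which permits taking a maximum over the finitely many face-dependent constants.
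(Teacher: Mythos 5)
Your proposal is correct and takes essentially the same route as the paper's own proof: restrict $F$ to the face indexed by $J=I^\c_0(x)$, observe that $F(x)=G_J(x_J)$, $F^*_x=G_J^*$ and $[v(x)]_J$ is the minimum-norm subgradient of $G_J$ at $x_J$, verify $G_J(x_J)-G_J^*\le F(x)-F^*\le\delta$ via $F^*_x\ge F^*$, invoke the first error bound theorem of Section \ref{err-bdds} on the restricted problem, and finish by maximizing the face-dependent constants over the finitely many index sets, with the case $J=\emptyset$ handled trivially. Your explicit check that each restricted problem attains its minimum (bounded below by $F^*$ on the face, hence solvable by the footnote to Assumption \ref{assump}) is a small point the paper leaves implicit, and is a welcome addition rather than a deviation.
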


\begin{proof}
Let $x\in\cS(\delta)$ be arbitrarily chosen, $I=I_0(x)$ and $J=I^\c_0(x)$. If $J=\emptyset$, it is clear that $x=0$ and hence $F^*_x=F(x)$. Also, by convention $\|[v(x)]_J\|=0$. These imply the conclusion holds. We now assume $J\neq\emptyset$. Consider the problem 
\beq \label{hFJs}
\hF^*_J = \min\limits_{z\in \Re^{|J|}} \hF_J(z) := \frac12  z^T A_{JJ} z - b^T_J z
+ \tau \|z\|_1.
\eeq 
  In view of the definitions of $F^*_x$, $\hF_J$, $\hF^*_J$, $F$, $F^*$ and $J$, one can observe that 
\[
\hF_J(x_J)=F(x), \quad\quad \hF^*_J=F^*_x \ge F^*.
\]
This together with $x\in\cS(\delta)$ implies that $\hF_J(x_J)-\hF^*_J  \le  F(x)-F^* \le \delta$.  By \eqref{v}, \eqref{hFJs} and the definition of $J$, we also observe that $[v(x)]_J$ is the minimum-norm subgradient of $\hF_J$ at $x_J$. In addition, notice that problem \eqref{hFJs} is in the same form as \eqref{l1-qp}. By these facts and applying Theorem \ref{errbdd-F} to  problem \eqref{hFJs}, there exists some $\eta_J>0$ (depending on $\delta$ and $J$) such that 
\beq \label{F-bdd-J}
F(x)-F^*_x =  \hF_J(x_J) - \hF^*_J \le \eta_J \|[v(x)]_J\|.
\eeq 
Let $\hat\eta = \max\{\eta_J: J=I^\c_0(x), \ x\in\cS(\delta)\}$, which is finite due to the fact that all possible choices of $J$ are finite. The conclusion immediately follows from this and \eqref{F-bdd-J}.
\end{proof}

\gap


The error bound presented in Theorem \ref{errbdd-Fx} is a local error bound as it depends on $\delta$. In addition, Theorem  \ref{errbdd-Fx} only ensures the existence of some parameter $\eta$ for the error bound, but its actual value is generally unknown. We next derive a global error bound with a known $\eta$ for problem \eqref{l1-qp} when $A$ is symmetric positive definite. To proceed, we first establish a lemma as follows.

\begin{lemma} \label{gap-relation}
Suppose $A \neq 0$ and $b\in\range(A)$. Let $f(x)$ be defined in \eqref{f} and 
$f^*=\min_{x\in\Re^n} f(x)$. Then there holds:
\[
\frac{1}{2\|A\|} \|\nabla f(x)\|^2  \le f(x) - f^* \le \frac{\|A^+\|}{2} \|\nabla f(x)\|^2, \quad\quad \forall x\in\Re^n.
\]
\end{lemma}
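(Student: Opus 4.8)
The plan is to reduce the claim to a purely spectral two-sided inequality for the symmetric positive semidefinite matrix $A$ and then prove that by diagonalizing.

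First I would exploit the hypothesis $b\in\range(A)$. It guarantees that the linear system $Ax=b$ is solvable, so there is a point $x^*$ with $\nabla f(x^*)=Ax^*-b=0$; since $f$ is convex, $x^*$ is a global minimizer and $f^*=f(x^*)$. A direct expansion using $b=Ax^*$ (equivalently, completing the square) then yields, for every $x\in\Re^n$, the two identities $\nabla f(x)=A(x-x^*)$ and $f(x)-f^*=\tfrac12(x-x^*)^T A (x-x^*)$. Writing $d=x-x^*$, the lemma becomes equivalent to the algebraic statement
\[
\frac{1}{\|A\|}\,\|Ad\|^2 \ \le\ d^T A d \ \le\ \|A^+\|\,\|Ad\|^2, \qquad \forall d\in\Re^n.
\]

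Next I would diagonalize. Since $A$ is symmetric positive semidefinite, I fix an orthonormal eigenbasis $u_1,\dots,u_n$ with eigenvalues $\lambda_i\ge 0$ and expand $d=\sum_i c_i u_i$, so that $d^T A d=\sum_i \lambda_i c_i^2$ and $\|Ad\|^2=\sum_i \lambda_i^2 c_i^2$. For the left inequality I use the termwise bound $\lambda_i^2\le \lambda_{\max}(A)\,\lambda_i$ (valid because $0\le\lambda_i\le\lambda_{\max}(A)=\|A\|$), which gives $\|Ad\|^2\le \|A\|\,d^T A d$. For the right inequality I observe that every index with $\lambda_i=0$ contributes nothing to either sum, while for $\lambda_i>0$ one has $\lambda_i\le \lambda_i^2/\lambda^+_{\min}(A)$; summing over the positive part of the spectrum and recalling $\|A^+\|=1/\lambda^+_{\min}(A)$ yields $d^T A d\le \|A^+\|\,\|Ad\|^2$. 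Substituting back $f(x)-f^*=\tfrac12 d^T A d$ and $\nabla f(x)=Ad$ delivers both halves of the claimed chain of inequalities.

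The argument is essentially routine once this reduction is in place; the only places that genuinely use the hypotheses are the existence of the minimizer $x^*$, which requires $b\in\range(A)$, and the correct handling of $\Null(A)$. In the upper bound the null-space components of $d$ are harmless because they are annihilated by $A$ in both $d^T A d$ and $\|Ad\|^2$, so the per-coordinate estimate $\lambda_i\le \lambda_i^2/\lambda^+_{\min}(A)$ only needs to hold on the strictly positive spectrum. This is precisely why $\|A^+\|$, defined through the smallest \emph{positive} eigenvalue rather than a (possibly zero) smallest eigenvalue, is the right constant. I expect this bookkeeping of the zero eigenvalues to be the sole subtlety, with no real difficulty beyond it.
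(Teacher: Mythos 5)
Your proof is correct and follows essentially the same route as the paper: both arguments pick $x^*$ with $Ax^*=b$, expand $x-x^*$ in an orthonormal eigenbasis of $A$, write $f(x)-f^*=\tfrac12\sum_i \lambda_i\alpha_i^2$ and $\|\nabla f(x)\|^2=\sum_i \lambda_i^2\alpha_i^2$, and sandwich the former between $\frac{1}{\lambda_1}$ and $\frac{1}{\lambda_\ell}$ times the latter, with the zero eigenvalues dropping out of both sums. The only cosmetic difference is that the paper indexes the positive spectrum via $\ell=\rank(A)$ and works with the sums directly, whereas you phrase the comparison as termwise inequalities $\lambda_i^2\le\|A\|\lambda_i$ and $\lambda_i\le\|A^+\|\lambda_i^2$ on the positive eigenvalues.
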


\begin{proof}
Let $\lambda_1 \ge \lambda_2 \ge \cdots \ge \lambda_n \ge 0$ be all eigenvalues of $A$ and $\{u_i\}^n_{i=1}$ the corresponding orthonormal eigenvectors. In addition, let $x^*$  be
an optimal solution of the problem $\min_{x\in\Re^n} f(x)$. Clearly,  $Ax^*=b$. Moreover, for any $x\in\Re^n$,  we have $ x-x^* = \sum^n_{i=1} \alpha_i u_i$ for some $\{\alpha_i\}^n_{i=1}$. These imply
\beq \label{expand-g}
\nabla f(x) = Ax-b = A(x-x^*) = \sum^n_{i=1} \lambda_i \alpha_i u_i.
\eeq
 Let $\ell=\rank(A)$. It follows that $\lambda_i=0$ for all $i>\ell$. In view of this and \eqref{expand-g}, we have
\[
\|\nabla f(x)\|^2 = \sum^n_{i=1} \lambda_i^2 \alpha^2_i =  \sum^\ell_{i=1} \lambda_i^2 \alpha^2_i.
\]
This together with the fact $\lambda_1 \ge \cdots \ge \lambda_\ell >0$ yields
\[
\frac{1}{\lambda_1} \|\nabla f(x)\|^2 = \frac{1}{\lambda_1} \sum^\ell_{i=1} \lambda^2_i \alpha^2_i \ \le \   \sum^\ell_{i=1} \lambda_i \alpha^2_i
\ \le \ \frac{1}{\lambda_\ell} \sum^\ell_{i=1} \lambda^2_i \alpha^2_i = \frac{1}{\lambda_\ell} \|\nabla f(x)\|^2.
\]
Using the definitions of $f$ and $x^*$, \eqref{expand-g}, $x-x^* = \sum^n_{i=1} \alpha_i u_i$ and $\lambda_i=0$ for all $i>\ell$, one can observe that
\[
f(x) - f^* = \frac12 (x-x^*)^T A(x-x^*) = \frac12 \sum^n_{i=1} \lambda_i \alpha^2_i =  \frac12 \sum^\ell_{i=1} \lambda_i \alpha^2_i.
\]
The conclusion then immediately follows from the last two relations and 
the fact that $\lambda_1=\|A\|$ and $\lambda_\ell=1/\|A^+\|$. 
\end{proof}

\begin{theorem}  \label{errbdd-F}
Let $F$ and $F^*_x$ be defined in \eqref{l1-qp} and \eqref{Fxs}, respectively. 
Suppose that $A$ is symmetric positive definite. Then there holds:
\[
F(x) - F^*_x \le \frac{\|A^{-1}\|}{2} \|[v(x)]_J\|^2,  \quad\quad \forall x\in\Re^n,
\]
where $J=I^\c_0(x)$.
\end{theorem}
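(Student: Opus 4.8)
The plan is to mirror the strategy used in the proof of Theorem~\ref{errbdd-Fx}: reduce to the problem supported on $J=I^\c_0(x)$ and then invoke the sharp two-sided bound of Lemma~\ref{gap-relation}. The twist is that Lemma~\ref{gap-relation} applies only to a smooth quadratic, whereas the reduced objective $\hF_J$ from \eqref{hFJs} still carries the $\ell_1$ term; the idea is to \emph{linearize} that term along the fixed sign pattern of $x_J$ and keep careful track of the inequality direction. I first dispose of the trivial case $J=\emptyset$, where $x=0$, $F^*_x=F(x)$ and $\|[v(x)]_J\|=0$ by convention, so the claim reads $0\le 0$. Assume henceforth $J\neq\emptyset$ and set $s=\sgn(x_J)\in\{-1,1\}^{|J|}$, which is well defined since $x_i\neq 0$ for all $i\in J$.

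The key construction is the smooth quadratic
\[
g(z)=\tfrac12 z^T A_{JJ} z-(b_J-\tau s)^T z,\qquad z\in\Re^{|J|},
\]
with $g^*=\min_z g(z)$. Two facts drive the argument. Since $s=\sgn(x_J)$ gives $s^T x_J=\|x_J\|_1$, we have $g(x_J)=\hF_J(x_J)=F(x)$, and a direct computation yields $\nabla g(x_J)=A_{JJ}x_J-b_J+\tau s=[v(x)]_J$, using that $[A_{JJ}x_J-b_J]_i=\nabla_i f(x)$ and $\tau s_i=\tau\,\sgn(x_i)$ for $i\in J$, in accordance with \eqref{v}. At the same time, since $s^T z\le\|z\|_1$ for every $z$, we have $g(z)\le\hF_J(z)$ pointwise, hence $g^*\le\hF^*_J=F^*_x$. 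Combining these, $F(x)-F^*_x=g(x_J)-\hF^*_J\le g(x_J)-g^*$, which is exactly the smooth gap that Lemma~\ref{gap-relation} controls.

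Because $A$ is positive definite, so is its principal submatrix $A_{JJ}$; in particular $A_{JJ}\neq 0$ and $b_J-\tau s\in\range(A_{JJ})=\Re^{|J|}$, so Lemma~\ref{gap-relation} applies to $g$ and gives $g(x_J)-g^*\le\tfrac12\|A_{JJ}^{-1}\|\,\|\nabla g(x_J)\|^2=\tfrac12\|A_{JJ}^{-1}\|\,\|[v(x)]_J\|^2$. The final step is to replace $\|A_{JJ}^{-1}\|$ by $\|A^{-1}\|$: embedding any $z\in\Re^{|J|}$ into $\Re^n$ by zero-padding shows $z^T A_{JJ}z\ge\lambda_{\min}(A)\|z\|^2$, whence $\lambda_{\min}(A_{JJ})\ge\lambda_{\min}(A)$ and therefore $\|A_{JJ}^{-1}\|\le\|A^{-1}\|$. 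Chaining the inequalities delivers the claim.

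The main thing to get right is the passage from $g$ to $\hF_J$. The unconstrained minimizer of $g$ may lie in an orthant other than that of $x_J$, so in general $g^*<F^*_x$; what rescues the argument is that this is precisely the inequality $g^*\le F^*_x$ needed to \emph{upper} bound the gap $F(x)-F^*_x$ by $g(x_J)-g^*$. Everything else is routine bookkeeping, namely the identity $\nabla g(x_J)=[v(x)]_J$ and the submatrix norm bound $\|A_{JJ}^{-1}\|\le\|A^{-1}\|$.
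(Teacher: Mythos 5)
Your proof is correct and follows essentially the same route as the paper: you construct the same linearized quadratic $\tF_J(z)=\frac12 z^TA_{JJ}z+(-b_J+\tau\,\sgn(x_J))^Tz$ (your $g$), verify $g(x_J)=F(x)$, $\nabla g(x_J)=[v(x)]_J$ and $g^*\le F^*_x$, and invoke Lemma~\ref{gap-relation} on the positive definite submatrix $A_{JJ}$. The only differences are cosmetic ones in your favor: you spell out the verification $\|A_{JJ}^{-1}\|\le\|A^{-1}\|$ via the Rayleigh-quotient/zero-padding argument, which the paper uses without comment.
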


\begin{proof}
Let $x\in\Re^n$ be arbitrarily chosen and let $J=I^\c_0(x)$. If $J=\emptyset$, it is clear that $x=0$ and hence $F^*_x=F(x)$. Also, by convention $\|[v(x)]_J\|=0$. These imply the conclusion holds. We now assume $J\neq\emptyset$. Consider the problem 
\[
\tF^*_J = \min\limits_{z\in \Re^{|J|}} \tF_J(z) := \frac12  z^T A_{JJ} z + (-b_J+\sgn(x_J))^T z.
\]
Since $A$ is positive definite, so is $A_{JJ}$. It then follows that 
$\sgn(x_J)-b_J\in\range(A_{JJ})$. By applying Lemma \ref{gap-relation} to this problem,  we obtain that 
\beq \label{gap-rel}
\tF_J(x_J) - \tF^*_J \le \frac{\|(A_{JJ})^{-1}\|}{2} \|\nabla \tF_J(x_J)\|^2.
\eeq 
In addition, by the definitions of $F$, $\tF_J$ and $J$, one can observe that $\tF_J(y_J) \le F(y)$ for all $y\in \cH(x)$, 
where $\cH(x)$ is defined in \eqref{Fxs}. This together with the definitions of $\tF^*_J$ and $F^*_x$ implies $\tF^*_J \le F^*_x$. Also, we observe that $\tF_J(x_J)=F(x)$ and $[v(x)]_J = \nabla \tF_J(x_J)$. 
Using these relations and \eqref{gap-rel}, we have
\[
F(x)-F^*_x \ \le\ \tF_J(x_J) - \tF^*_J  \ \le \  \frac{\|(A_{JJ})^{-1}\|}{2} \|\nabla \tF_J(x_J)\|^2 \ \le \ 
\frac{\|A^{-1}\|}{2} \|[v(x)]_J\|^2,
\]
and hence the conclusion holds.
\end{proof}

\section{Generalized conjugate gradient  methods for \eqref{l1-qp}} 
\label{algorithm}

In this section we propose several $\CG$ methods for solving problem \eqref{l1-qp}, which terminate at an optimal solution in a finite number iterations.  A key ingredient of these methods is to apply a truncated projected CG (TPCG) method to a sequence of convex 
QP over certain faces of some orthants in $\Re^n$.  

\subsection{Truncated projected conjugate gradient methods} 

In this subsection we present two TPCG methods for finding an (perhaps very roughly) approximate solution to a convex QP on a face of some orthant in $\Re^n$ in the form of
\beq \label{sub-qp}
\ba{ll}
\min\limits_x & q(x) := f(x) + c^T x \\
\mbox{s.t.} & x_j = 0, \quad j \in J_0, \\
& x_j \le 0,  \quad j \in J_-, \\
& x_j \ge 0,  \quad  j \in J_+, \\
\ea
\eeq
where $f$ is defined in \eqref{f}, $c\in \Re^n$, and $J_-,J_0,J_+ \subseteq \{1,\ldots,n\}$ form a partition of
$\{1,\ldots,n\}$. For convenience of presentation, we denote by  $\Omega$ the feasible region of \eqref{sub-qp}. 

For the first TPCG method, each iterate is obtained by applying the standard projected CG (PCG) method   
\footnote{The PCG method applied to problem \eqref{sub-qp-eq} is equivalent to the CG method applied to the problem $\min q(x_{J^\c_0},0)$, where $J^\c_0$ is the complement of $J_0$ in $\{1,\ldots,n\}$.}  to the problem
\beq \label{sub-qp-eq}
\min\limits_x \{q(x): x_j = 0, \ j \in J_0\}
\eeq
until an approximate solution of \eqref{sub-qp-eq} is found or a PCG iterate crosses the boundary of $\Omega$.
In the former case, the method outputs the resulting approximate solution. But in the latter case, it outputs
the intersection point between the boundary of $\Omega$ and 
the line segment joining the last two PCG iterates.
Let $x^0$ be an arbitrary feasible point of problem \eqref{sub-qp} and $\eps\ge 0$ be given.  We now present the first TPCG method for problem \eqref{sub-qp}.

\gap

\vspace{.1in}

\noindent {\bf Subroutine 1:} $y = \TPCG1(A,b,c,J_0,J_-,J_+,x^0,\eps)$

\vspace{.1in}

\noindent{\bf Input:} $A$, $b$, $c$, $J_0$, $J_-$, $J_+$, $x^0$, $\eps$.

\vspace{.1in}

\noindent Set $r^0 = Ax^0-b+c$, $\cH = \{x\in\Re^n: x_j = 0, \ j \in J_0\}$, $p^0=\cP_\cH(r^0)$, $d^0=-p^0$, $k=0$.

\vspace{.1in}

\noindent {\bf Repeat}
\bi
\item[1)] $\alpha_k = \min\{\alpha^\cg_k,\alpha^\tc_k\}$, where
\[
\ba{l}
\alpha^\cg_k = \frac{\|p^k\|^2}{(d^k)^TAd^k},\footnotemark \quad
\alpha^{\rm tc}_k = \max\{\alpha: x^k + \alpha d^k \in \Omega\}.
\ea
\]
 \footnotetext{If $\|p^k\| \neq 0$ and $(d^k)^TAd^k=0$, we set $\alpha^{\cg}_k=\infty$.}
\item[2)] $x^{k+1} = x^k + \alpha_k d^k$.
\item[3)] If $\alpha^\cg_k > \alpha_k$, return $y=x^{k+1}$ and terminate.
\item[4)] $r^{k+1} = r^k + \alpha_k A d^k$.
\item[5)] $p^{k+1} = \cP_\cH(r^{k+1})$. If $\|p^{k+1}\|_\infty \le \eps$, return
$y=x^{k+1}$ and terminate.
\item[6)] $d^{k+1} = -p^{k+1} + \frac{\|p^{k+1}\|^2}{\|p^k\|^2}d^k$.
\item[7)] $k \leftarrow k+1$.
\ei
{\bf Output:} $y$.

\gap

{\bf Remark 1:}  The iterations of the above TPCG method are almost 
identical to those of PCG applied to problem \eqref{sub-qp-eq} 
except that the step length $\alpha_k$ is chosen to be an intermediate 
one when an iterate of PCG crosses the boundary of $\Omega$. In addition,  if $\alpha^\cg_k > \alpha_k$ holds at some $k$, 
the output $y$ is on the boundary of $\Omega$. If $\|p^{k+1}\|_\infty \le \eps$ holds at some $k$, 
the output $y$ is an approximate optimal solution of problem \eqref{sub-qp}.

\gap

We next show that under a mild assumption the above method terminates 
in a finite number of iterations.

\begin{theorem} \label{TPCG1-converge}
Assume that problem \eqref{sub-qp} has at least an optimal solution. Suppose that $x^0$ is a feasible point of problem \eqref{sub-qp} and $\eps \ge 0$. Let 
$B=A_{J^\c_0J^\c_0}$, where $J^\c_0$ is the complement of $J_0$ in $\{1,\ldots,n\}$. The following  statements hold:
\bi
\item[(i)] If problem \eqref{sub-qp-eq} is bounded below, Subroutine 1 terminates in at most $\min(M,\rank(B))$ \footnote{By convention, we define $\log 0 = -\infty$. It follows  that $M=-\infty$ and hence $\min(M,\rank(B))=\rank(B)$ when $\eps=0$.} iterations, where   
\[
M = \left\lceil\frac{\log\eps - \log(2\sqrt{\kappa(B)}\|p^0\|)}{\log(\kappa(B)-1)-\log(\kappa(B)+1)}\right\rceil. 
\]
\item[(ii)] If problem \eqref{sub-qp-eq} is unbounded below, Subroutine 1 terminates in at most $\rank(B)+1$ iterations.
\ei
\end{theorem}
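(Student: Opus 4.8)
The plan is to reduce Subroutine~1 to standard CG on a lower-dimensional quadratic and then import the finite-termination and convergence-rate properties of CG. Writing $q(x)=\tfrac12 x^TAx-(b-c)^Tx$ and letting $z=x_{J^\c_0}$, $g=(b-c)_{J^\c_0}$, the footnote identity shows that so long as the truncation branch (step~3) is not triggered, the iterates of Subroutine~1 coincide with those of PCG for \eqref{sub-qp-eq}, equivalently standard CG for $\min_z\phi(z):=\tfrac12 z^TBz-g^Tz$. Since every iterate keeps $x^k_{J_0}=0$, one checks $(r^k)_{J^\c_0}=Bz^k-g$, so the projected residual $p^k=\cP_\cH(r^k)$ is exactly the reduced residual $Bz^k-g$ (zero-padded) and $\|p^k\|=\|Bz^k-g\|$. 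The structural observation driving everything is that step~3, when it fires, terminates the subroutine at once; hence the number of iterations is controlled by how long pure CG can run before reaching the exact solution or driving $\|p^{k+1}\|_\infty\le\eps$.

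For part~(i), boundedness below of \eqref{sub-qp-eq} forces $g\in\range(B)$, so $\phi$ attains its minimum at some $z^*$ with $Bz^*=g$ and $p^k=B(z^k-z^*)$. I would then combine two CG facts for this possibly singular system. The first is finite termination: CG reaches $z^*$ within $\rank(B)$ iterations, where $p^k=0$ so step~5 fires; this yields the $\rank(B)$ bound. The second is the rate estimate. Starting from the standard bound $\|z^k-z^*\|_B\le 2\big(\tfrac{\sqrt{\kappa(B)}-1}{\sqrt{\kappa(B)}+1}\big)^k\|z^0-z^*\|_B$, I would pass to the residual by restricting to $\range(B)$ and using $\lambda^+_{\min}(B)\,\|w\|_B^2\le\|Bw\|^2\le\lambda_{\max}(B)\,\|w\|_B^2$ for $w\in\range(B)$, which gives $\|p^k\|\le 2\sqrt{\kappa(B)}\big(\tfrac{\sqrt{\kappa(B)}-1}{\sqrt{\kappa(B)}+1}\big)^k\|p^0\|$. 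The elementary inequality $\tfrac{\sqrt\kappa-1}{\sqrt\kappa+1}\le\tfrac{\kappa-1}{\kappa+1}$ then upgrades this to $\|p^k\|\le 2\sqrt{\kappa(B)}\big(\tfrac{\kappa(B)-1}{\kappa(B)+1}\big)^k\|p^0\|$, and imposing $\|p^k\|\le\eps$ (which implies $\|p^k\|_\infty\le\eps$) shows step~5 must fire once $k\ge M$. Taking the smaller of the two bounds, and noting an earlier step~3 only shortens the run, gives termination in at most $\min(M,\rank(B))$ iterations (when $\eps=0$ or $\|p^0\|=0$ only the $\rank(B)$ bound is operative).

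For part~(ii), unboundedness below of \eqref{sub-qp-eq} is equivalent to $g\notin\range(B)$. By the CG analysis for singular inconsistent systems, within at most $\rank(B)+1$ iterations CG produces an index $k$ with $p^k\neq0$ but $(d^k)^TAd^k=(d^k_{J^\c_0})^TBd^k_{J^\c_0}=0$, so $\alpha^\cg_k=\infty$ by the footnote convention. Because $d^k$ is a CG descent direction and has zero curvature, $q$ decreases linearly without bound along it; since \eqref{sub-qp} has an optimal solution, $q$ is bounded below on $\Omega$, so $d^k$ cannot be a recession direction of $\Omega$ and hence $\alpha^\tc_k<\infty$. Then $\alpha^\cg_k>\alpha_k=\alpha^\tc_k$ and step~3 terminates the subroutine, establishing the $\rank(B)+1$ bound.

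The part I expect to be delicate is the rate conversion in the semidefinite setting: verifying that the CG iterates and residuals remain in $\range(B)$, relating $\|p^k\|$ to $\|z^k-z^*\|_B$ through the restricted spectrum of $B$, and invoking the correct finite-termination statements for CG on singular consistent and inconsistent systems --- exactly the properties the appendix is designed to supply. Everything else is bookkeeping: the passage from the $B$-norm error bound to the residual bound with constant $2\sqrt{\kappa(B)}$ and the sharpened rate $\tfrac{\kappa(B)-1}{\kappa(B)+1}$, via $\tfrac{\sqrt\kappa-1}{\sqrt\kappa+1}\le\tfrac{\kappa-1}{\kappa+1}$, is what produces the stated closed form for $M$.
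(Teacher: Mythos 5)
Your proposal is correct and takes essentially the same route as the paper: reduce Subroutine 1 to (P)CG on the reduced problem $\min_z \tfrac12 z^TBz-g^Tz$, invoke the appendix results (Theorem \ref{CG-convergence}(iii) for the rate, Theorem \ref{cg-unbdd} for finite termination and the unbounded-direction case), convert to residual norms, and in case (ii) use the existence of an optimal solution of \eqref{sub-qp} to force $\alpha^{\rm tc}_k<\infty$ so that step 3 fires. Your restricted-spectrum inequality $\lambda^+_{\min}(B)\|w\|^2_B \le \|Bw\|^2 \le \lambda_{\max}(B)\|w\|^2_B$ on $\range(B)$ is exactly the paper's Lemma \ref{gap-relation} in disguise (since $q(z)-q^*=\tfrac12\|z-z^*\|^2_B$), and the weakening $\frac{\sqrt{\kappa}-1}{\sqrt{\kappa}+1}\le\frac{\kappa-1}{\kappa+1}$ that you make explicit to obtain the stated $M$ is the step the paper leaves implicit in ``one can easily conclude.''
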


\begin{proof}
(i) Assume that problem \eqref{sub-qp-eq} is bounded. Suppose for contradiction that Subroutine 1 does not terminate in $\min(M,\rank(B))$ iterations. Then the iterates $x^k$, $k=1,\ldots,\min(M,\rank(B))$ of Subroutine 1 are identical to those generated by the PCG method 
applied to problem \eqref{sub-qp-eq}. Let $q^*$ denote the optimal value of \eqref{sub-qp-eq}. It follows from Theorem \ref{CG-convergence} (iii) that 
for $k=1,\ldots,\min(M,\rank(B))$, 
\[
q(x^k)-q^* \le 4 \left(\frac{\sqrt{\kappa(B)}-1}{\sqrt{\kappa(B)}+1}\right)^{2k}\left(q(x^0)-q^*\right).
\]
By the definition of $p^k$ and Lemma \ref{gap-relation}, 
we have 
\[
\|p^{k}\|^2  \le 2 \|B\| (q(x^k) - q^*), \quad\quad  q(x^0)-q^* \le \|B^+\|\|p^0\|^2/2.
\]
Using these relations, we obtain that 
\[
 \|p^{k}\|^2  \le 4\kappa(B) \left(\frac{\sqrt{\kappa(B)}-1}{\sqrt{\kappa(B)}+1}\right)^{2k}\|p^0\|^2.
\]
In view of this and Theorem \ref{cg-unbdd} (i), one can easily conclude that 
the PCG method must terminate at $x^k$ satisfying $\|p^k\|_\infty \le \eps$ for some $0 \le k \le \min(M,\rank(B))$. This contradict the above supposition.

(ii) Assume that problem \eqref{sub-qp-eq} is unbounded. Suppose for contradiction that Subroutine 1 does not terminate in $\rank(B)+1$ iterations. Then the iterates $x^k$, $k=1,\ldots,\rank(B)+1$, of Subroutine 1 are identical to those generated by the PCG method applied to problem \eqref{sub-qp-eq}. By Theorem \ref{cg-unbdd} (ii), there must exist some $0\le i \le \rank(B)+1$ such that $q(x^i+\alpha d^i) \to -\infty$ as $\alpha \to 
\infty$. Recall that $x^l$ is in $\Omega$ and problem \eqref{sub-qp} has at 
least an optimal solution. Thus there exists a least $\alpha \ge 0$ such that $x^{i+1} =x^i+\alpha d^i$ lies on the boundary of $\Omega$ and Subroutine 1 thus terminates at iteration $i$, which is a contradiction to the above supposition. 
\end{proof}

\vspace{.1in}

{\bf Remark 2:} It follows from Theorem \ref{TPCG1-converge} that when $\eps=0$, $\TPCG1$ executes at most (but possibly much less than) $n+1$ PCG iterations. On the other hand, when $\eps>0$, the number 
of PCG iterations executed in $\TPCG1$ depends on $\eps$ 
in $O(\log(1/\eps))$.

\vspace{.03in}

As seen from step 3) of Subroutine 1, it immediately terminates once an iterate crosses the boundary of $\Omega$. In this case, the output $y$ may be a rather poor approximate solution to problem \eqref{sub-qp}. In order to improve the quality of $y$, we resort an active set approach by iteratively applying Subroutine 1 to minimize $q$ over a decremental subset of $\Omega$, which is formed by incorporating the active constraints of the iterate obtained from the immediately preceding execution of Subroutine 1. 
Let $x^0$ be an arbitrary feasible point of problem \eqref{sub-qp} and $\eps\ge 0$ be given. We now present this improved TPCG method for problem \eqref{sub-qp} as follows.

\gap


\noindent {\bf Subroutine 2: $y = \TPCG2(A,b,c,J_0,J_-,J_+,x^0,\eps)$}

\vspace{.1in}


\noindent{\bf Input:} $A$, $b$, $c$, $J_0$, $J_-$, $J_+$, $x^0$, $\eps$.

\vspace{.1in}

\noindent Set $\cH_0 = \{x\in\Re^n: x_j = 0, \ j \in J_0\}$, $J^0_0=J_0$, $J^0_-=J_-$, $J^0_+=J_+$, $k=0$.

\vspace{.1in}

\noindent {\bf Repeat}
\bi
\item[1)] If $\|\cP_{\cH_k}(Ax^k-b+c)\|_\infty \le \eps$, return
$y=x^k$ and terminate.
\item[2)] $x^{k+1} = \TPCG1(A,b,c,J^k_0,J^k_-,J^k_+,x^k,\eps)$.
\item[3)] $J^{k+1}_0=I_0(x^{k+1})$, $J^{k+1}_-=I_-(x^{k+1})$, $J^{k+1}_+=I_+(x^{k+1})$, $\cH_{k+1}=\cH(x^{k+1})$.
\item[4)] $k \leftarrow k+1$.
\ei
{\bf Output:} $y$.

\gap

We next show that under some suitable assumptions, Subroutine 2 
 terminates in a finite number of iterations.

\begin{theorem} \label{TPCG2-converge}
Assume that problem \eqref{sub-qp} has at least an optimal solution. 
Let $\cH_0 = \{x\in\Re^n: x_j=0, \ j\in J_0\}$ and $\Omega$ be the feasible 
region of problem \eqref{sub-qp}. Suppose that $x^0$ is a feasible point \eqref{sub-qp} and $\eps \ge 0$.  Then the following statements hold:
\bi
\item[(i)] Subroutine 2 is well defined.
\item[(ii)] Subroutine 2 terminates in at most $n+1-|J_0|$ iterations. Moreover, its output $y$ satisfies $y\in\Omega$ and $\|\cP_{\cH(y)}(Ay-b+c)\|_\infty \le \eps$,
where $\cH(\cdot)$ is defined in \eqref{Fxs}.
\item[(iii)] Suppose additionally that $\|\cP_{\cH_0}(Ax^0-b+c)\|_\infty > \eps$ and $x^0-\alpha \cP_{\cH_0}(Ax^0-b+c) \in \Omega$ for sufficiently small $\alpha>0$. Then $q(y) < q(x^0)$, where $q$ is defined in \eqref{sub-qp}.
\ei
\end{theorem}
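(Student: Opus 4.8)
The backbone of the argument is that the faces visited by Subroutine 2 form a nested decreasing chain. Write $\Omega_k$ for the feasible set of problem \eqref{sub-qp} with the index sets $J^k_0,J^k_-,J^k_+$, and $q(x)=f(x)+c^Tx$. For $k\ge 1$ these index sets are exactly the sign pattern of $x^k$ recorded in step 3), so $x^k\in\Omega_k$; moreover any point of $\Omega_{k}$ meets the (weaker) sign/zero constraints defining $\Omega_{k-1}$, whence $\Omega=\Omega_0\supseteq\Omega_1\supseteq\cdots$. This already settles well-definedness (i): $x^k$ is a feasible initial point for the $k$-th call, and since $q$ is a convex quadratic that is bounded below on $\Omega_k\subseteq\Omega$ it attains its minimum there (Frank--Wolfe), so the hypothesis of Theorem \ref{TPCG1-converge} is met and each $\TPCG1$ call terminates. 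Nesting also gives $y\in\Omega$, because $y$ is produced inside some $\Omega_k\subseteq\Omega$.

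For (ii) the plan is to show that every outer iteration that is \emph{not} the last strictly enlarges the zero set $J^k_0=I_0(x^k)$. First I would note the residual identity $r^k=Ax^k-b+c$ maintained by $\TPCG1$, so that both the test in step 5) of $\TPCG1$ and the test in step 1) of Subroutine 2 are applied to the projected gradient $\cP_{\cH_k}(Ax^k-b+c)$, and that $\cH_k=\{x:x_j=0,\ j\in J^k_0\}$ is the same subspace used internally by the $k$-th call. The key dichotomy is how that call exits: either (a) through its step 5), giving $\|\cP_{\cH_k}(Ax^{k+1}-b+c)\|_\infty\le\eps$; or (b) through its step 3), in which case $x^{k+1}$ lies on the boundary of $\Omega_k$, so some coordinate in $J^k_-\cup J^k_+$ vanishes and $J^{k+1}_0=I_0(x^{k+1})\supsetneq J^k_0$. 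In case (a) the very next outer check must terminate: since $J^{k+1}_0\supseteq J^k_0$, projecting onto the smaller subspace $\cH_{k+1}=\cH(x^{k+1})$ only zeroes out more coordinates, so $\|\cP_{\cH_{k+1}}(Ax^{k+1}-b+c)\|_\infty\le\|\cP_{\cH_k}(Ax^{k+1}-b+c)\|_\infty\le\eps$. Hence any call followed by another call is of type (b), which yields the strictly increasing chain $|J_0|=|J^0_0|<|J^1_0|<\cdots\le n$; its length is at most $n+1-|J_0|$, giving the iteration bound. The terminating step 1) check is, for $k\ge1$, exactly the assertion $\|\cP_{\cH(y)}(Ay-b+c)\|_\infty\le\eps$, and for the degenerate case of termination at $k=0$ it follows from $\cH(x^0)\subseteq\cH_0$.

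For (iii) the plan is to track $q$ through the whole run. The hypothesis $\|\cP_{\cH_0}(Ax^0-b+c)\|_\infty>\eps$ forces the first step 1) check to fail, so $\TPCG1$ is invoked at least once. I would then argue that $q$ never increases along any step: a full CG step is an exact line search minimizing $q$ along $d^k$, while a truncated step uses $\alpha_k=\alpha^{\tc}_k\le\alpha^{\cg}_k$ and therefore stops at or before the one-dimensional minimizer of the convex parabola $\alpha\mapsto q(x^k+\alpha d^k)$, so neither can raise $q$; chaining over all inner and outer steps gives $q(y)\le q(x^0)$. To make this strict I would look only at the first inner step from $x^0$: there $d^0=-\cP_{\cH_0}(r^0)=-p^0$ with $\langle\nabla q(x^0),d^0\rangle=-\|p^0\|^2<0$ since $p^0\neq0$, and the hypothesis that $x^0-\alpha p^0\in\Omega$ for small $\alpha>0$, together with boundedness of $q$ on $\Omega$, forces $0<\alpha_0=\min(\alpha^{\cg}_0,\alpha^{\tc}_0)<\infty$. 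Because that parabola is strictly below its value at $0$ throughout $(0,\alpha^{\cg}_0]$, we get $q(x^1)<q(x^0)$, and combined with monotonicity this gives $q(y)<q(x^0)$.

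The step I expect to be the main obstacle is the bookkeeping in (ii): one must verify that the residual tested \emph{inside} $\TPCG1$ (relative to $J^k_0$) and the residual tested by the \emph{outer} loop (relative to $\cH_{k+1}=\cH(x^{k+1})$) line up except on the newly zeroed coordinates, so that the type-(a)/type-(b) dichotomy is clean and the claimed strict growth of $|J^k_0|$ really holds at each non-final iteration. A secondary but necessary point, used in both (i) and (iii), is confirming that the subproblem over each $\Omega_k$ remains bounded below—so that $\alpha^{\tc}_k<\infty$ whenever a CG direction is one of descent—which I would derive from $\Omega_k\subseteq\Omega$ and the standing assumption that \eqref{sub-qp} has an optimal solution.
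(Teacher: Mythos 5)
Your proposal is correct and follows essentially the same route as the paper's proof: part (i) via the nested feasible regions $\Omega_k\subseteq\Omega$ and Theorem \ref{TPCG1-converge}, part (ii) via the dichotomy between a step-5) exit (small projected residual, hence termination at the next outer check by monotonicity of $\|\cP_{\cH}(\cdot)\|_\infty$ under nested subspaces) and a boundary exit (strict growth of the zero-index set, giving the $n+1-|J_0|$ bound and the same two-case verification of the output condition), and part (iii) via strict descent of the first projected-gradient line search plus monotonicity of $q$ along all subsequent steps. The only cosmetic difference is that you phrase the dichotomy directly and track the chain in terms of $J^k_0$ rather than the paper's contradiction argument on $I_0(x^k)$, which if anything handles the degenerate $k=0$ case (where $I_0(x^0)\supsetneq J_0$) slightly more cleanly.
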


\begin{proof}
(i) Observe that in step 2) of Subroutine 2, Subroutine 1 (namely, $\TPCG1$)
is applied to the problem
\beq \label{kth-subqp}
\ba{rl}
\min\limits_x & q(x) \\
\mbox{s.t.} & x_j = 0, \quad  j \in J^k_0,\\
& x_j \le 0,  \quad  j \in J^k_-, \\
& x_j \ge 0,  \quad  j \in J^k_+,
\ea
\eeq
where $q$ is defined in \eqref{sub-qp}.  Let $\Omega_k$ denote the feasible region of \eqref{kth-subqp}. In view of the updating scheme of Subroutine 2 and the definitions of $J^k_0$, $J^k_-$ and $J^k_+$,  it is not hard to observe that
$\emptyset \neq \Omega_k \subseteq \Omega$. By the assumption that \eqref{sub-qp} has at least an
optimal solution, so does \eqref{kth-subqp}. It then follows from Theorem \ref{TPCG1-converge} that
$x^{k+1}$ shall be successfully generated by Subroutine 1. Using this observation and an inductive argument, we can conclude that Subroutine 2 is well defined.  

(ii) Suppose for contradiction that Subroutine 2 does not terminate in $K=n+1-|J_0|$ iterations. Then  $\|\cP_{\cH_{k+1}}(Ax^{k+1}-b+c)\|_\infty > \eps$ for 
all $0 \le k \le K$. Since $\{x^k\}_{k=0}^{K}$ are generated by Subroutine 1, one can observe that $I_0(x^k) \subseteq I_0(x^{k+1})$ 
and hence $\cH_k \supseteq \cH_{k+1}$ for every $0\le k \le K$.  It then follows from these and the definition of $\cH(\cdot)$  that for all $0\le k \le K$, 
\[
\|\cP_{\cH_k}(Ax^{k+1}-b+c)\|_\infty \ge \|\cP_{\cH_{k+1}}(Ax^{k+1}-b+c)\|_\infty  > \eps.
\]
This implies that when Subroutine 1 is applied to \eqref{kth-subqp}, it  
terminates at a boundary point $x^{k+1}$ of the feasible region of \eqref{kth-subqp}. It then follows that 
\[
I_0(x^0) \subsetneq  I_0(x^1) \subsetneq \cdots \subsetneq I_0(x^K).
\]
Thus $\{|I_0(x^k)|\}^{K}_{k=0}$ is strictly increasing, which along with $K=n+1-|J_0|$ and $|I_0(x^0)| \ge |J_0|$ leads to  $|I_0(x^{K})| \ge n+1$. This contradicts the trivial fact $|I_0(x^K)| \le n$. 
 Therefore, Subroutine 2 must terminate at some $y$ in at most $n+1-|J_0|$ iterations. Clearly, $y\in\Omega$. We now prove $\|\cP_{\cH(y)}(Ay-b+c)\|_\infty \le \eps$ by considering two separate cases as follows.

Case 1): $\|\cP_{\cH_0}(Ax^0-b+c)\|_\infty \le \eps$. In this case, Subroutine 2 terminates at $k=0$ and outputs $y=x^0$. By $x^0\in\Omega$ and the definition of $\cH(\cdot)$, one can see that $\cH(x^0) \subseteq \cH_0$ and hence
\[
\|\cP_{\cH(x^0)}(Ax^0-b+c)\|_\infty \le \|\cP_{\cH_0}(Ax^0-b+c)\|_\infty \le \eps,
\]
which together with $y=x^0$ implies $\|\cP_{\cH(y)}(Ay-b+c)\|_\infty \le \eps$.

Case 2): $\|\cP_{\cH_0}(Ax^0-b+c)\|_\infty > \eps$. In this case, Subroutine 2 must terminate at some iteration $k \ge 1$. It then follows that $\|\cP_{\cH_k}(Ax^k-b+c)\|_\infty \le \eps$ and $y=x^k$. In addition, we observe from the definitions of $\cH(\cdot)$ and $\cH_k$ that $\cH(x^k)=\cH_k$ for $k \ge 1$. It then immediately follows that $\|\cP_{\cH(y)}(Ay-b+c)\|_\infty \le \eps$.

(iii) We now prove statement (iii). Since $\|\cP_{\cH_0}(Ax^0-b+c)\|_\infty > \eps$, $x^1$ must be generated by calling the subroutine $\TPCG1(A,b,c,J_0,x^0,\eps)$, whose first iteration performs a
projected gradient step to find a point $x(\alpha^*)$, where
\[
\alpha^* = \arg\min\limits_{\alpha \ge 0} \{q(x(\alpha)): x(\alpha) \in \Omega\},
\]
and $x(\alpha) = x^0 - \alpha \cP_{\cH_0}(\nabla q(x^0))=x^0-\alpha \cP_{\cH_0}(Ax^0-b+c)$. By the 
assumption that $x^0-\alpha \cP_{\cH_0}(Ax^0-b+c) \in \Omega$ for sufficiently small $\alpha>0$, one can see that $\alpha^*>0$ and $q(x(\alpha^*))<q(x^0)$. We also observe that the value of $q$ is non-increasing along the subsequent iterates of the subroutine $\TPCG1(A,b,c,J_0,x^0,\eps)$. These observations and the definition of $x^1$ imply that $q(x^1)<q(x^0)$.
 In addition, $q$ is non-increasing along the iterates generated in Subroutine 1. Hence, $q(x^{k+1}) \le q(x^k)$ for all $k \ge 1$. It then follows $q(x^k) < q(x^0)$ for all $k \ge 1$. Notice that $y=x^k$
for some $k \ge 1$. Hence, $q(y) < q(x^0)$.
\end{proof}

\gap

{\bf Remark 3:} As seen from Theorem \ref{TPCG2-converge}, the subroutine $\TPCG1$ is executed in $\TPCG2$ at most (but possibly much less than) $n+1$ times. In view of this and Remark 2, one can see that when $\eps=0$, the number of PCG iterations executed in $\TPCG2$ is at most $(n+1)^2$. On the other hand, when 
$\eps>0$, its number of PCG iterations depends on $\eps$ in $O(\log(1/\eps))$.

\subsection{The first generalized conjugate gradient method for \eqref{l1-qp}}
\label{1st-cg}


In this subsection we propose a $\CG$ method for solving problem \eqref{l1-qp}.  We show that this method terminates at an optimal solution of \eqref{l1-qp} in a finite number of iterations. Before proceeding, we introduce some notations that will be used through the next several subsections.

Given any $x\in\Re^n$, we define
\beq \label{index-I0}
\ba{l}
I^0_0(x) = \{i\in I_0(x): 0 \in [\nabla_i f(x)-\tau, \nabla_i f(x)+\tau]\}, \\ [5pt]
I^+_0(x) = \{i\in I_0(x):  \nabla_i f(x)+\tau<0\}, \\ [5pt]
I^-_0(x) = \{i\in I_0(x): \nabla_i f(x)-\tau>0\},
\ea
\eeq
where $I_0(\cdot)$ is given in \eqref{index-I}. 
 Also, we define
$c(\cdot;\tau): \Re^n \to \{-\tau,0,\tau\}^n$ as follows:
\beq \label{cx}
c_i(x;\tau) = \left\{
\ba{ll}
\tau & \ \mbox{if} \ i\in I_+(x) \cup I^+_0(x); \\ [5pt]
0 & \ \mbox{if} \ i \in I^0_0(x); \\ [5pt]
-\tau &  \ \mbox{if} \ i \in I_-(x) \cup I^-_0(x),
\ea
\right. \quad i=1,\dots, n,
\eeq
where $I_-(\cdot)$ and $I_+(\cdot)$ are defined in \eqref{index-I}. 
It then follows from \eqref{v} and \eqref{cx} that
\beq \label{v-gQ}
v_i(x) = \nabla_i f(x) + c_i(x;\tau), \quad \forall i \notin I^0_0(x). 
\eeq
In addition, given  any $y\in\Re^n$, we define
\[
Q(x;y) = f(x) + c(y;\tau)^T x.
\]

The main idea of our $\CG$ method is as follows. Given a current iterate $x^k$, we check
to see whether $v(x^k)=0$ or not. If yes, then $x^k$ is an optimal solution of \eqref{l1-qp}. Otherwise, we find next iterate $x^{k+1}$ by applying Subroutine 2 with initial point $x^k$ and $\eps=0$ 
to the problem
\beq \label{kth-qp}
\ba{rl}
\min\limits_x & Q(x;x^k) \\
\mbox{s.t.} & x_j = 0, \quad  j \in J^k_0,\\
& x_j \le 0,  \quad  j \in J^k_-, \\
& x_j \ge 0,  \quad  j \in J^k_+,
\ea
\eeq
where $J^k_0=I^0_0(x^k)$, $J^k_-=I_-(x^k) \cup I^-_0(x^k)$ and $J^k_+=I_+(x^k) \cup I^+_0(x^k)$.
That is, $x^{k+1}$ is obtained by executing the subroutine
$
\TPCG2(A,b,c^k,J^k_0,J^k_-,J^k_+,x^k,0).
$
As later shown, such $x^{k+1}$ satisfies the following properties: 
\beqa
&& F(x^{k+1}) < F(x^k), \label{F-descent} \\ [5pt] 
&& x^{k+1} \in \Arg\min\{F(x): x_i=0, \ i \in I_0(x^{k+1})\}. \label{subspace-min} \footnotemark   
\eeqa
\footnotetext{By convention, the symbol $\Arg$ stands for the set of the solutions of the associated optimization problem. When this set is known to be a singleton, we use the symbol $\arg$ to stand for it instead.}
By these relations, one can observe that there is no repetition among $\{I_0(x^k): k\ge 1\}$.
Notice that $\{I_0(x): x\in \Re^n\}$ is a finite set. Thus the method must terminate
in a finite number of iterations. Moreover, we will show that it terminates at  an optimal solution of \eqref{l1-qp}. We now present our $\CG$ method as follows.

\gap

\noindent {\bf $\CG$ method 1 for problem \eqref{l1-qp}: $y = \CG1(A,b,\tau,x^0,\eps)$}

\vspace{.1in}

\noindent{\bf Input:} $A$, $b$, $\tau$, $x^0$, $\eps$.

\vspace{.1in}

\noindent Set $k=0$.

\vspace{.1in}

\noindent {\bf Repeat}
\bi
\item[1)] If $\|v(x^k)\|_\infty \le \eps$, return $y=x^k$ and terminate.
\item[2)] $J^k_0 = I^0_0(x^k)$, $J^k_-=I_-(x^k) \cup I^-_0(x^k)$, $J^k_+=I_+(x^k) \cup I^+_0(x^k)$, $c^k = c(x^k;\tau)$.
\item[3)] $x^{k+1} = \TPCG2(A,b,c^k,J^k_0,J^k_-,J^k_+,x^k,0)$.
\item[4)] $k \leftarrow k+1$.
\ei
{\bf Output:} $y$.

\gap

We next show that the above $\CG$ method terminates in a finite number of iterations, and moreover it finds an optimal solution of problem \eqref{l1-qp} when $\eps=0$.

\begin{theorem} \label{CG1-convergence}
 Under Assumption \ref{assump}, the following statements hold:
\bi
\item[(i)]
$\CG$ method 1 terminates in at most $\L(n)$ iterations, where $\L(n)$ is defined in \eqref{L}.
\item[(ii)] Let $y$ be the output of $\CG$ method 1. Then $s \in \partial F(y)$
for some $s$ with $\|s\|_\infty \le \eps$, and moreover, $y$ is an optimal solution of
problem \eqref{l1-qp} when $\eps=0$.
\ei
\end{theorem}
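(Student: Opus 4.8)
The plan is to establish both statements by showing that the iterates produced by $\CG$ method 1 obey the two key properties \eqref{F-descent} and \eqref{subspace-min} advertised just before the statement, and then to use a finiteness-of-supports argument for termination together with an optimality verification at the terminal iterate. The backbone of the argument is the observation that at iteration $k$, the subproblem \eqref{kth-qp} is precisely an instance of \eqref{sub-qp} with the linear term $c^k=c(x^k;\tau)$, so Theorem \ref{TPCG2-converge} applies verbatim to the call $\TPCG2(A,b,c^k,J^k_0,J^k_-,J^k_+,x^k,0)$ in step 3).

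First I would prove statement (i). The aim is to show that the index sets $I_0(x^k)$ are strictly nested or at least non-repeating across iterations, so that the iteration count is bounded by the combinatorial quantity $\L(n)$ in \eqref{L}. The crucial sub-step is to verify that $x^k$ is strictly feasible for the subproblem \eqref{kth-qp} in the descent direction, i.e.\ that the hypothesis $\|\cP_{\cH_0}(Ax^0-b+c)\|_\infty>\eps$ of Theorem \ref{TPCG2-converge}(iii) holds with $\eps=0$; here I would use \eqref{v-gQ}, which identifies $\nabla q(x^k)=Ax^k-b+c^k$ with $v(x^k)$ on all indices outside $I^0_0(x^k)$, so that $v(x^k)\neq 0$ forces a nonzero projected gradient on the relevant face. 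I would also have to check the second hypothesis of part (iii), that $x^k-\alpha\cP_{\cH_0}(\nabla q(x^k))\in\Omega$ for small $\alpha>0$, which follows from the way the active sets $J^k_-,J^k_+$ are split off the zero components via $I^\pm_0$ and $I^0_0$. Granting these, Theorem \ref{TPCG2-converge}(iii) gives $q(x^{k+1})<q(x^k)$, which I then translate into $F(x^{k+1})<F(x^k)$, property \eqref{F-descent}, by relating $Q(x;x^k)=f(x)+c(x^k;\tau)^Tx$ to $F(x)=f(x)+\tau\|x\|_1$ on the face where the signs are fixed. Strict decrease of $F$ precludes revisiting any support pattern $I_0(x^k)$, and since each such pattern not lying in $\cI^*$ can appear at most once, the number of iterations is bounded by $\L(n)$.

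Next I would prove statement (ii). By Theorem \ref{TPCG2-converge}(ii), the output $x^{k+1}$ of each $\TPCG2$ call satisfies $\|\cP_{\cH(x^{k+1})}(Ax^{k+1}-b+c^k)\|_\infty\le\eps$; with $\eps=0$ in step 3) this is an \emph{exact} subspace-minimizer condition, which yields property \eqref{subspace-min}. For the final conclusion I would examine the terminal iterate $y=x^k$ at which step 1) triggers, i.e.\ $\|v(x^k)\|_\infty\le\eps$. Writing $s=v(x^k)$, the definition \eqref{v} guarantees $s\in\partial F(y)$, and $\|s\|_\infty\le\eps$ gives the claimed approximate-stationarity. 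When $\eps=0$ the termination test reads $v(y)=0$, which by the characterization recorded after \eqref{v} (namely $v(x)=0\iff 0\in\partial F(x)\iff x\in\cS^*$) shows $y$ is an exact optimal solution of \eqref{l1-qp}.

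The main obstacle I anticipate is the bookkeeping that links the surrogate objective $Q(\cdot;x^k)$ used inside the subproblem to the true objective $F$. On the face selected by $J^k_0,J^k_-,J^k_+$, the linear term $c^k_i x_i$ must agree with $\tau|x_i|$ so that minimizing $q$ is genuinely minimizing $F$ there; the delicate indices are those in $I^\pm_0(x^k)$, where $x^k_i=0$ but the sign of $c^k_i$ is dictated by the gradient via \eqref{index-I0} and \eqref{cx}. I would need to check that $\TPCG2$ never moves such a component in the ``wrong'' direction — that is, the constraints $x_j\le 0$ for $j\in J^k_-$ and $x_j\ge 0$ for $j\in J^k_+$ keep $\operatorname{sgn}(x^{k+1}_i)$ consistent with $c^k_i$ — so that $Q(x^{k+1};x^k)=F(x^{k+1})$ and no spurious decrease is claimed. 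Establishing this sign-consistency cleanly, and arguing that releasing a zero component via the exact line search step is exactly what \eqref{index-I0} permits, is where the proof requires the most care.
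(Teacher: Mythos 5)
Your proposal follows the paper's own proof essentially step for step: you establish \eqref{F-descent} by verifying the hypotheses of Theorem \ref{TPCG2-converge}(iii) through \eqref{v-gQ} and the sign structure of $I^{+}_0(x^k)$, $I^{-}_0(x^k)$, establish \eqref{subspace-min} from Theorem \ref{TPCG2-converge}(ii) with $\eps=0$ together with the sign-consistency $c^k\in\tau\,\partial\|x^{k+1}\|_1$ on the face $\Omega_k$ (the very point you flagged as the delicate one, and which the paper handles via $c_i(x^k;\tau)x_i=\tau|x_i|$ on $\Omega_k$), count supports against $\L(n)$, and take $s=v(y)$ for part (ii) — all exactly as in the paper. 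The one spot to tighten is the counting: strict decrease of $F$ alone does not preclude repeated supports — you need \eqref{subspace-min} (which you have) so that $F(x^k)=F^*_{x^k}$ is strictly decreasing across iterations, and you should add the one-line observation that a support in $\cI^*$ would force $F^*_{x^k}=F^*$ and hence $v(x^k)=0$ and immediate termination, so only distinct supports outside $\cI^*$ occur at non-terminal iterates, which is precisely what the definition \eqref{L} of $\L(n)$ counts.
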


\begin{proof}
(i)  We first show that \eqref{F-descent} and \eqref{subspace-min} hold at iteration 
$k$ at which $\CG$ method 1 has not yet terminated, that is, 
$\|v(x^k)\|_\infty>\eps$.
Let $\Omega_k$ denote the feasible region of problem \eqref{kth-qp}. 
By \eqref{cx} and the definitions of $J^k_0$, $J^k_-$ and $J^k_+$, one can observe that $c_i(x^k;\tau)x_i = \tau|x_i|$ for all $x\in \Omega_k$. 
This along with the definitions of $Q(\cdot;\cdot)$, $f$ and $F$ yields 
\beq \label{FQ}
F(x) = Q(x;x^k), \quad \quad \forall x\in\Omega_k.
\eeq
It then follows from Assumption \ref{assump} that problem \eqref{kth-qp} is bounded below and
hence has at least an optimal solution. By this and Theorem \ref{TPCG2-converge}, $x^{k+1}$ shall
be successfully generated in step 3) by the subroutine $\TPCG2$. We next show that $x^{k+1}$ satisfies \eqref{F-descent} and \eqref{subspace-min}. To this end, let 
\beq \label{cHk}
\bcH = \{x\in\Re^n: x_i=0, \ i \in I^0_0(x^k)\}.
\eeq
By \eqref{v-gQ}, \eqref{cHk} and the definition of $c^k$, we have
\beq \label{proj-H0}
\cP_{\bcH}(Ax^k-b+c^k) = \cP_{\bcH}(\nabla f(x^k)+c(x^k;\tau)) = \cP_{\bcH}(v(x^k)).
\eeq
Observe from \eqref{v} and \eqref{index-I0} that $v_i(x^k) < 0$ if $i\in I^+_0(x^k)$; and $v_i(x^k) > 0$ if $i\in I^-_0(x^k)$. By \eqref{cHk}, one can see that $[\cP_{\bcH}(v(x^k))]_i=0$ if $i\in I^0_0(x^k)$ and $[\cP_{\bcH}(v(x^k))]_i=v_i(x^k)$ otherwise. These imply that $[\cP_{\bcH}(v(x^k))]_i= 0$ if $i\in I^0_0(x^k)$; $[\cP_{\bcH}(v(x^k))]_i< 0$ if $i\in I^+_0(x^k)$; and $[\cP_{\bcH}(v(x^k))]_i> 0$ if $i\in I^-_0(x^k)$. 
In view of these relations, \eqref{proj-H0} and the definition of $\Omega_k$, one can observe that
when $\alpha >0$ is sufficiently small,
\[
x^k - \alpha \cP_{\bcH}(Ax^k-b+c^k) = x^k - \alpha \cP_{\bcH}(v(x^k)) \in \Omega_k.
\]
In addition, we observe that $\|\cP_{\bcH}(v(x^k))\|_\infty = \|v(x^k)\|_\infty$, which along with \eqref{proj-H0} and $\|v(x^k)\|_\infty>\eps$ yields 
$\|\cP_{\bcH}(Ax^k-b+c^k)\|_\infty >\eps$. 
Recall that $x^{k+1}$ is resulted from the subroutine $\TPCG2$ when applied to problem \eqref{kth-qp} starting at $x^k$. It then follows from Theorem \ref{TPCG2-converge} that
$x^{k+1} \in\Omega_k$, $\cP_{\hcH}(Ax^{k+1}-b+c^k)=0$ and $Q(x^{k+1};x^k) < Q(x^k;x^k)$, where
\[
\hcH = \{x\in\Re^n: x_i=0, \ i \in I_0(x^{k+1})\}.
\]
In view of \eqref{FQ}, $Q(x^{k+1};x^k) < Q(x^k;x^k)$ and $x^k, \ x^{k+1}\in\Omega_k$, we see that \eqref{F-descent} holds. 
In addition, since $x^{k+1}\in\Omega_k$, the nonzero components of $x^{k+1}$ share the same sign as the corresponding ones of $x^k$. Using this fact, \eqref{cx}    and the definition of $c^k$, one can observe that $c^k=c(x^k;\tau)\in \tau \partial\|x^{k+1}\|_1$, which along with the definition of $F$ implies that  
$ Ax^{k+1}-b+c^k \in \partial F(x^{k+1})$. It then follows from this, 
$\cP_{\hcH}(Ax^{k+1}-b+c^k)=0$ and $x^{k+1} \in \hcH$ that 
\[
x^{k+1} \in \Arg\min\{F(x): x \in \hcH\}.
\]
This relation and the definition of $\hcH$ immediately imply that 
\eqref{subspace-min} holds. 

We are now ready to prove that $\CG$ method 1 terminates in at most $\L(n)$ 
iterations, where $\L(n)$ is defined in \eqref{L}. Suppose for contradiction that it does 
not terminate in $\L(n)$ iterations. Then this method generates $x^{k+1}$ satisfying \eqref{F-descent} and \eqref{subspace-min} for $k=0, \ldots, \L(n)$. It then follows that for $k=1, \ldots, \L(n)$, 
\[
\min\{F(x): x_i=0, \ i \in I_0(x^{k+1})\} \ < \ \min \{F(x): x_i=0, \ i \in I_0(x^{k})\}. 
\] 
This implies $I_0(x^i) \notin \cI^*$ and $I_0(x^i) \not\subseteq I_0(x^j)$ for all $i, j=1,\ldots,\L(n)$ and $j>i$, where $\cI^*$ is defined in \eqref{L}. This contradicts the definition of $\L(n)$. Thus the method must terminate in at most $\L(n)$ iterations.

(ii) Since $y$ is the output of $\CG$ method 1, one has $\|v(y)\|_\infty \le \eps$. We also know that $v(y) \in\partial F(y)$. Hence, statement (ii) holds with $s=v(y)$. Clearly, when $\eps=0$, we have $0\in \partial F(y)$ and thus $y$ is
an optimal solution of \eqref{l1-qp}.
\end{proof}

\sgap

{\bf Remark 4:} In view of Theorem \ref{CG1-convergence} and Remark 3,  one can observe that the number of PCG iterations executed within $\CG$ method 1 is at most $\L(n)(n+1)^2$.  

\subsection{The second generalized conjugate gradient method for \eqref{l1-qp}}
\label{2nd-cg}

The first $\CG$ method proposed in Subsection \ref{1st-cg} enjoys a nice theoretical property, that is, it terminates at an optimal solution of problem \eqref{l1-qp} in a finite number of iterations when its input parameter $\eps$ is set to $0$. Nevertheless, as observed from step 3) of that method, PCG is required to solve some associated optimization problems exactly. This is not an issue from a theoretical perspective 
due to the finite convergence of PCG. It is, however, generally hard to achieve that due to numerical errors. In this subsection we propose a $\CG$ method for \eqref{l1-qp} in which the involved PCG
is only required to find an approximate solution of the associated optimization problems, which makes the method more practical. To proceed, we introduce some notations and state several facts as follows.

Let $v(\cdot)$ and $I_0(\cdot)$ be defined in \eqref{v} and \eqref{index-I}, respectively. We define the projected minimum-norm subgradient $\vp: \Re^n \to \Re^n$ as follows:
\beq \label{vp}
(\vp(x))_i = \left\{
\ba{ll}
v_i(x) & \ \ \  \mbox{if} \ i \in I_0(x), \\
0 & \ \ \ \mbox{otherwise},
\ea
\right.
\quad\quad \forall x \in \Re^n,
\eeq
which is the projection of $v(x)$ onto the subspace $\{y\in\Re^n: y_i=0, \ i\notin I_0(x)\}$.  As later shown, the direction $-\vp(x)$ is a descent direction for $F$ at $x$ when $\vp(x) \neq 0$. We now assume $\vp(x) \neq 0$. The exact line search starting at $x$ along $-\vp(x)$ can be performed by computing
\beq \label{alpha}
\alpha^* = \arg\min\limits_{\alpha\ge 0} F\left(x-\alpha \vp(x)\right)
\eeq
and setting $x^+ = x-\alpha^* \vp(x)$. We can show that $\alpha^*$ has a closed-form expression. Indeed, by virtue of \eqref{v}, \eqref{index-I0} and
\eqref{vp}, one can observe that for all $\alpha \ge 0$,
\[
x_i - \alpha (\vp(x))_i = \left\{
\ba{ll}
\ge 0 & \ \mbox{if} \ i \in I^+_0(x), \\ [5pt]
\le 0 & \ \mbox{if} \ i \in I^-_0(x), \\ [5pt]
= x_i & \ \mbox{otherwise}.
\ea
\right.
\]
This together with \eqref{cx} implies that
\[
\tau \|x-\alpha \vp(x)\|_1 = c(x; \tau)^T(x-\alpha \vp(x)), \quad\quad \forall \alpha \ge 0.
\]
It follows from this and the definitions of $F$ and $f$ that
\beq \label{F-alpha}
F\left(x-\alpha \vp(x)\right) = f\left(x-\alpha \vp(x)\right) + c(x; \tau)^T(x-\alpha \vp(x)),
\quad\quad \forall \alpha \ge 0.
\eeq
Using this relation and \eqref{alpha}, we obtain that
\beq \label{1st-opt}
(\vp(x))^T [\nabla f(x-\alpha^* \vp(x)) + c(x;\tau)] = 0.
\eeq
Observe from \eqref{v} and \eqref{vp} that $[\vp(x)]_i=0$ for every $i\in I^0_0(x)$. This together with \eqref{v-gQ}, \eqref{vp},  \eqref{1st-opt} and the definition of $f$ implies that
\[
\ba{lcl}
0 &=& (\vp(x))^T [\nabla f(x) -\alpha^* A \vp(x) + c(x;\tau)] \\ [5pt]
& =&  (\vp(x))^T [v(x) -\alpha^* A \vp(x)]  \ = \ \|\vp(x)\|^2 -\alpha^* (\vp(x))^T A \vp(x).
\ea
\]
It follows from this and the assumption $\vp(x) \neq 0$ that $(\vp(x))^T A \vp(x) \neq 0$ and hence
\beq \label{alphas}
\alpha^* = \frac{\|\vp(x)\|^2}{(\vp(x))^T A \vp(x)} \ \ge \ 0.
\eeq

Given $x^0\in\Re^n$, one knows from Theorem \ref{errbdd-Fx} that there exists some 
$\eta>0$ that may depend on $x^0$ such that  
\beq \label{cg2-errbd}
F(x) - F^*_x \le  \frac{\eta}{2\|A\|} \|[v(x)]_{I^\c_0(x)}\|^2  \ \mbox{for  all} \ x \ \mbox{with} \ F(x) \le F(x^0),
\eeq
where $F^*_x$ is defined in \eqref{Fxs}. Especially, when $A$ is symmetric positive definite, one can see from Theorem  \ref{errbdd-F} that the above $\eta$ can be chosen as $\kappa(A)$, where $\kappa(A)$ is defined in \eqref{kappa}.  In general, the actual value of the above $\eta$ may be unknown. In what follows, we first assume that the $\eta$ associated with \eqref{cg2-errbd} is known. For the case where the $\eta$ is unknown, we can estimate it by executing a try-and-test strategy that will be discussed afterwards. 


We next propose the second $\CG$ method for problem \eqref{l1-qp}.  Unlike the first $\CG$ method that always performs TPCG2 (namely Subroutine 2), each iteration of the method presented below either executes the subroutine TPCG2 or performs the exact line search along the negative projected minimum-norm subgradient of $F$. Following a similar strategy proposed by Dostal 
and Sch\"oberl \cite{Dostal2} for solving a box-constrained convex QP, we 
determine which type of step should be taken by comparing the magnitude of some 
components of the minimum-norm subgradient of $F$ to that of its rest 
components.
 In particular, given a current iterate $x^k$, if
\beq \label{measure}
\left\|[v(x^k)]_{I_0(x^k)}\right\| > \sqrt{\eta} \left\|[v(x^k)]_{I^\c_0(x^k)}\right\|,
\eeq
where $\eta$ is given in \eqref{cg2-errbd},  it indicates that the zero components of $x^k$ are more far from being optimal compared to its nonzero components.  It is thus plausible to release some of zero components of $x^k$ by minimizing $F$ along the direction $-\vp(x^k)$ to obtain a new iterate $x^{k+1}$, that is,
\[
x^{k+1} = x^k-\alpha_k \vp(x^k),
\]
where $\alpha_k$ is computed by \eqref{alphas} with $x$ replaced by $x^k$.  Analogously, if \eqref{measure} is violated at $x^k$, it is more 
beneficial to improve the nonzero components of $x^k$, which can be made by the subroutine $\TPCG2$ to result in $x^{k+1}$.

Let  $\eta$ be  given in \eqref{cg2-errbd} and $\eps \ge 0$. The second $\CG$ method for problem \eqref{l1-qp} is presented in detail as follows.

\gap

\noindent {\bf $\CG$ method 2 for problem \eqref{l1-qp}: $y = \CG2(A,b,\tau,x^0,\eta,\eps)$}

\vspace{.1in}

\noindent{\bf Input:} $A$, $b$, $\tau$, $x^0$, $\eta$, $\eps$.

\vspace{.1in}

\noindent Set $k=0$.

\vspace{.1in}

\noindent {\bf Repeat}
\bi
\item[1)] If $\|v(x^k)\|_\infty \le \eps$, return $y=x^k$ and terminate.
\item[2)] $J^k_0 = I^0_0(x^k)$, $J^k_-=I_-(x^k) \cup I^-_0(x^k)$, $J^k_+=I_+(x^k) \cup I^+_0(x^k)$, $c^k = c(x^k;\tau)$.
\item[3)] If \eqref{measure} holds,
do
 \beq \label{step-21} 
 x^{k+1} = x^k-\alpha_k \vp(x^k), \quad \alpha_k = \frac{\|\vp(x^k)\|^2}{(\vp(x^k))^T A \vp(x^k)};
\eeq 
else
\beq \label{step-22}
x^{k+1} = \TPCG2\left(A,b,c^k,J^k_0,J^k_-,J^k_+,x^k,\frac{\eps}{\max(\sqrt{n\eta},1)}\right).
\eeq
\item[4)] $k \leftarrow k+1$.
\ei
{\bf Output:} $y$.

\gap

Before establishing convergence of $\CG$ method 2, we study two sufficient descent properties of $F$ at $x$ along the direction of $-\vp(x)$.

\begin{lemma} \label{lem:descent1}
Suppose that $x\in\Re^n$ satisfies $\vp(x) \neq 0$. Let $x^+ = x-\alpha^* \vp(x)$, where
$\alpha^*$ be defined in \eqref{alphas}. Then there holds:
\beq \label{descent}
F(x^+) \ = \ F(x) - \frac{\|\vp(x)\|^4}{2(\vp(x))^T A \vp(x)}
\ \le \ F(x) - \frac{\|\vp(x)\|^2}{2\|A\|}.
\eeq

\end{lemma}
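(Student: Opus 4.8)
The plan is to regard $F(x-\alpha\vp(x))$ as an explicit one-dimensional quadratic in the step length $\alpha$, read off its exact unconstrained minimum to obtain the equality, and then bound a Rayleigh quotient to pass from that equality to the stated inequality. First I would invoke \eqref{F-alpha}, which writes $F(x-\alpha\vp(x)) = f(x-\alpha\vp(x)) + c(x;\tau)^T(x-\alpha\vp(x))$ for all $\alpha\ge 0$. Since $f$ is quadratic with $\nabla f(x)=Ax-b$, expanding and collecting powers of $\alpha$ yields
\[
F(x-\alpha\vp(x)) = F(x) - \alpha\,[\nabla f(x)+c(x;\tau)]^T\vp(x) + \frac{\alpha^2}{2}(\vp(x))^T A\vp(x),
\]
where I have used $F(x)=f(x)+c(x;\tau)^Tx$, i.e.\ \eqref{F-alpha} evaluated at $\alpha=0$.

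The key simplification is that the linear coefficient equals $\|\vp(x)\|^2$, which is precisely the identity already exploited in deriving \eqref{alphas}: by \eqref{vp} one has $[\vp(x)]_i=0$ for $i\notin I_0(x)$, and by \eqref{v} together with \eqref{index-I0} also $[\vp(x)]_i=0$ for $i\in I^0_0(x)$; on the remaining indices \eqref{v-gQ} gives $\nabla_i f(x)+c_i(x;\tau)=v_i(x)=[\vp(x)]_i$, so $[\nabla f(x)+c(x;\tau)]^T\vp(x)=(\vp(x))^Tv(x)=\|\vp(x)\|^2$. Hence $F(x-\alpha\vp(x))=F(x)-\alpha\|\vp(x)\|^2+\frac{\alpha^2}{2}(\vp(x))^TA\vp(x)$, a strictly convex quadratic in $\alpha$ since $(\vp(x))^TA\vp(x)>0$ was established just before \eqref{alphas}. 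Its unique minimizer is exactly the $\alpha^*$ of \eqref{alphas}, and substituting this value and simplifying produces the claimed equality $F(x^+)=F(x)-\|\vp(x)\|^4/(2(\vp(x))^TA\vp(x))$.

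Finally, the inequality follows by cancelling the positive factor $\|\vp(x)\|^2/2$, which reduces it to $(\vp(x))^TA\vp(x)\le\|A\|\,\|\vp(x)\|^2$; this is immediate from $A$ being symmetric positive semidefinite with $\|A\|=\lambda_{\max}(A)$. I do not expect any genuine obstacle: the only step requiring care is the identification of the linear coefficient as $\|\vp(x)\|^2$, and since this computation has effectively already been carried out in the derivation of \eqref{alphas}, the argument amounts to a clean assembly of facts the paper has already established.
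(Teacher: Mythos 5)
Your proof is correct and takes essentially the same route as the paper: both treat $F$ along the ray $x-\alpha \vp(x)$ as the one-dimensional quadratic \eqref{F-alpha}, evaluate the exact line-search decrease at $\alpha^*$ from \eqref{alphas}, and obtain the final inequality from the Rayleigh-quotient bound $(\vp(x))^T A \vp(x) \le \|A\|\,\|\vp(x)\|^2$. The only cosmetic difference is that you expand around $x$ and identify the linear coefficient as $\|\vp(x)\|^2$ via \eqref{v-gQ}, whereas the paper expands around $x^+$ and kills the cross term using the optimality condition \eqref{1st-opt}; the two computations are equivalent.
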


\begin{proof}
In view of \eqref{cx}, one has $c(x; \tau)^T x = \tau \|x\|_1$ and hence
\[
F(x) = f(x) + c(x; \tau)^T x.
\]
Using this, \eqref{alpha}-\eqref{1st-opt} and the definition of $f$, we have
\[
\ba{lcl}
F(x) &=& f(x) + c(x; \tau)^T x  \ = \  f\left(x-\alpha^* \vp(x)\right) + c(x; \tau)^T(x-\alpha^* \vp(x)) \\ [8pt]
& & +\  \alpha^* (\vp(x))^T [\nabla f(x-\alpha^* \vp(x)) + c(x;\tau)] + \frac12 (\alpha^*)^2 (\vp(x))^T A \vp(x)
\\ [8pt]
&=&  F(x-\alpha^* \vp(x)) + \frac12 (\alpha^*)^2 (\vp(x))^T A \vp(x) \\ [8pt]
&=& F(x^+) + \frac{\|\vp(x)\|^4}{2(\vp(x))^T A \vp(x)} \ \ge \ F(x^+) + \frac{\|\vp(x)\|^2}{2\|A\|},
\ea
\]
where the last inequality follows from $z^T A z \le \|A\| \|z\|^2$ for all $z\in\Re^n$.
\end{proof}

\gap

\begin{lemma} \label{lem:descent2}
Let $\eta$ be given in \eqref{cg2-errbd}. Suppose that $x\in\Re^n$ satisfies $F(x) \le F(x^0)$ and 
\beq \label{measure-x}
\left\|[v(x)]_{I_0(x)}\right\| > \sqrt{\eta} \left\|[v(x)]_{I^\c_0(x)}\right\|.
\eeq
Let $F^*_x$ be defined in \eqref{Fxs} and $x^+ = x-\alpha^* \vp(x)$, where $\alpha^*$ is defined in \eqref{alphas}.
Then $F(x^+) < F^*_x$. 
\end{lemma}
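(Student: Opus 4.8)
The plan is to chain together the sufficient-descent estimate from Lemma \ref{lem:descent1}, the error bound \eqref{cg2-errbd}, and the selection criterion \eqref{measure-x}, all three of which carry the common factor $1/(2\|A\|)$; that shared factor is precisely what makes the threshold $\sqrt{\eta}$ appearing in \eqref{measure-x} the correct one. First I would record the elementary identity $\|\vp(x)\|^2 = \|[v(x)]_{I_0(x)}\|^2$, which is immediate from the definition \eqref{vp} of $\vp(x)$ as the restriction of $v(x)$ to the index set $I_0(x)$. Since the right-hand side of \eqref{measure-x} is nonnegative, the hypothesis forces $\|[v(x)]_{I_0(x)}\| > 0$, hence $\vp(x) \neq 0$; this legitimizes the formula \eqref{alphas} for $\alpha^*$ and, in turn, the application of Lemma \ref{lem:descent1}.

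Next I would apply Lemma \ref{lem:descent1} together with the identity above to obtain
\[
F(x^+) \ \le \ F(x) - \frac{\|\vp(x)\|^2}{2\|A\|} \ = \ F(x) - \frac{\|[v(x)]_{I_0(x)}\|^2}{2\|A\|}.
\]
Then, because $F(x) \le F(x^0)$ by assumption, the error bound \eqref{cg2-errbd} applies and gives $F(x) - F^*_x \le (\eta/(2\|A\|))\,\|[v(x)]_{I^\c_0(x)}\|^2$. Subtracting $F^*_x$ from the displayed inequality and substituting this bound yields
\[
F(x^+) - F^*_x \ \le \ \frac{1}{2\|A\|}\left(\eta\,\|[v(x)]_{I^\c_0(x)}\|^2 - \|[v(x)]_{I_0(x)}\|^2\right).
\]

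Finally I would invoke the selection criterion \eqref{measure-x} in its squared form, $\|[v(x)]_{I_0(x)}\|^2 > \eta\,\|[v(x)]_{I^\c_0(x)}\|^2$, to conclude that the bracketed term is strictly negative, and therefore $F(x^+) < F^*_x$. I do not anticipate a genuine obstacle here: the argument collapses to a one-line combination once the right quantities are aligned. The only points demanding care are (i) matching the norm of the search direction $\vp(x)$ to the $I_0(x)$-block of $v(x)$, so that the descent \emph{gain} and the error-bound \emph{loss} are expressed in the very same two quantities, and (ii) preserving the strict inequality, which comes entirely from the strictness in \eqref{measure-x}, the two supporting estimates being needed only as non-strict inequalities.
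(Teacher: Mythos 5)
Your proof is correct and follows essentially the same route as the paper's: chaining the descent estimate of Lemma \ref{lem:descent1} with the error bound \eqref{cg2-errbd} and the identity $\|\vp(x)\|=\|[v(x)]_{I_0(x)}\|$, then letting the strict inequality \eqref{measure-x} force the combined bound below $F^*_x$. Your explicit observation that \eqref{measure-x} guarantees $\vp(x)\neq 0$ (so that $\alpha^*$ in \eqref{alphas} and Lemma \ref{lem:descent1} apply) is a small point the paper leaves implicit, but otherwise the two arguments coincide.
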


\begin{proof}
Let $J = I^\c_0(x)$. 
It follows from  \eqref{descent} and \eqref{cg2-errbd} that
\[
\ba{lcl}
F(x^+) & \le &  F(x) - \frac{\|\vp(x)\|^2}{2\|A\|} \ \le \  F^*_x + \frac{\eta}{2\|A\|} \|(v(x))_J\|^2 - \frac{\|\vp(x)\|^2}{2\|A\|}, \\ [10pt]
&=& F^*_x + \frac{\|\vp(x)\|^2}{2\|A\|} \left(\frac{\eta\|(v(x))_J\|^2}{\|\vp(x)\|^2}-1\right).
\ea
\]
The conclusion follows from this inequality, $\|\vp(x)\|=\|[v(x)]_{I_0(x)}\|$ and \eqref{measure-x}.
\end{proof}

\gap

We next show that $\CG$ method 2 terminates in a finite number of iterations, and moreover it terminates at an optimal solution of problem \eqref{l1-qp} when $\eps=0$.

\begin{theorem} \label{CG2-convergence}
Under Assumption \ref{assump}, the following statements hold:
\bi
\item[(i)]
 $\CG$ method 2 terminates in at most $2\L(n)$ iterations, where $\L(n)$ is defined in \eqref{L}. 
\item[(ii)] Let $y$ be the output of $\CG$ method 2. Then $s \in \partial F(y)$
for some $s$ with $\|s\|_\infty \le \eps$, and moreover, $y$ is an optimal solution of
problem \eqref{l1-qp} if $\eps=0$.
\ei
\end{theorem}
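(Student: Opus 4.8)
The plan is to establish finite convergence of $\CG$ method 2 by combining the termination argument already used for $\CG$ method 1 with the two descent lemmas just proved. The method at each iteration does one of two things depending on whether \eqref{measure} holds: either it performs an exact line search along $-\vp(x^k)$ (a line-search step), or it calls $\TPCG2$ (a subroutine step). I would first argue, exactly as in the proof of Theorem \ref{CG1-convergence}, that each subroutine step produces an $x^{k+1}$ satisfying a strict descent $F(x^{k+1}) < F(x^k)$ together with the subspace-minimality property \eqref{subspace-min} (now only approximate, since $\eps$ may be positive; but when $\eps=0$ it is exact). For the line-search steps, Lemma \ref{lem:descent1} immediately gives strict descent $F(x^{k+1}) < F(x^k)$ whenever $\vp(x^k)\neq 0$, which is guaranteed because \eqref{measure} holding forces $[v(x^k)]_{I_0(x^k)}\neq 0$ and hence $\vp(x^k)\neq 0$. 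So both step types are descent steps.

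The heart of the bound $2\L(n)$ is the observation that a line-search step cannot be wasted: I would use Lemma \ref{lem:descent2}, which says that when \eqref{measure} holds (and $F(x^k)\le F(x^0)$, maintained inductively by monotone descent), the line-search step achieves $F(x^{k+1}) < F^*_{x^k}$. Since $F^*_{x^k}$ is the minimum of $F$ over the face $\cH(x^k)$ determined by the current zero set $I_0(x^k)$, dropping strictly below it forces $I_0(x^{k+1}) \not\supseteq I_0(x^k)$, so the line-search step strictly enlarges the support (releases at least one zero component) in the sense that $I_0(x^{k+1})$ is not a superset of $I_0(x^k)$. Combining this with the subroutine-step property from Theorem \ref{TPCG2-converge}, namely that $\TPCG2$ only shrinks the support so $I_0(x^k)\subseteq I_0(x^{k+1})$ and yields \eqref{subspace-min}, I would argue that two consecutive steps cannot both leave the collection $\{I_0(x^k)\}$ unchanged in a way that repeats. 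More precisely, I would track the quantities $F^*_{x^k}$ and the index sets $I_0(x^k)$ and show that there is no repetition among $\{I_0(x^k)\}$ across subroutine steps, while each line-search step strictly decreases $F$ below the current face minimum; charging at most one line-search step and one subroutine step to each distinct admissible index set gives the factor of $2$ and the bound $2\L(n)$ via the counting argument behind the definition of $\L(n)$ in \eqref{L}.

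For part (ii), the argument is the clean part. Upon termination the stopping test gives $\|v(y)\|_\infty \le \eps$, and since $v(y)\in\partial F(y)$ one takes $s = v(y)$; when $\eps=0$ this yields $0\in\partial F(y)$, i.e. $y\in\cS^*$. The only subtlety is that when a subroutine step is taken with the inexact tolerance $\eps/\max(\sqrt{n\eta},1)$ in \eqref{step-22}, the output only approximately minimizes over its face, so I must verify that the final stopping criterion $\|v(y)\|_\infty\le\eps$ is genuinely met rather than merely the looser $\TPCG2$ tolerance. I would close this gap by using Theorem \ref{TPCG2-converge}(ii), which controls $\|\cP_{\cH(y)}(Ay-b+c^k)\|_\infty$ by the internal tolerance, and then bounding the full $\|v(y)\|_\infty$ by combining that projected-residual bound on the $I_0$ block with the error-bound relation \eqref{cg2-errbd}; the factor $\max(\sqrt{n\eta},1)$ in \eqref{step-22} is precisely chosen to absorb the $\sqrt{n}$ and $\sqrt{\eta}$ losses in passing from the $\infty$-norm projected residual to the full subgradient norm.

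The main obstacle I anticipate is the counting argument for the $2\L(n)$ bound: unlike $\CG$ method 1 where every iterate satisfies the exact subspace-minimality \eqref{subspace-min} and the index sets $I_0(x^k)$ never repeat, here the interleaving of the two step types breaks the monotone structure of the supports, so the key work is to show that line-search steps (which enlarge support) and subroutine steps (which shrink it) cannot cycle. I would handle this by verifying that each line-search step is immediately ``useful'' in the sense of Lemma \ref{lem:descent2} (driving $F$ strictly below the current face optimum, so the previously visited face can never be revisited as a minimizing face), and that consequently the sequence of faces on which subroutine steps attain their minima is still non-repeating, with each such face preceded by at most one line-search step.
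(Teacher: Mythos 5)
Your handling of the line-search steps matches the paper: Lemma \ref{lem:descent2} gives $F(x^{k+1})<F^*_{x^k}$ whenever \eqref{measure} holds, and since by monotone descent $F^*_{x^j}\le F(x^j)\le F(x^{i+1})<F^*_{x^i}$ for any two line-search iterations $i<j$, the index sets $I_0(x^k)$ attached to line-search steps are pairwise non-nested and lie outside $\cI^*$, so there are at most $\L(n)$ such steps by \eqref{L}. Part (ii), however, needs none of the machinery you build for it: GCG method 2 only returns in step 1), after the test $\|v(x^k)\|_\infty\le\eps$ has been checked explicitly, so $s=v(y)$ works immediately and there is no gap between the internal $\TPCG2$ tolerance and the final stopping criterion to close.

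The genuine gap is the factor $2$. The paper's mechanism is a sharp claim you never establish: \eqref{step-22} cannot be executed at two consecutive iterations. If it were executed at $k$ and $k+1$, then \eqref{measure} fails at $x^{k+1}$; Theorem \ref{TPCG2-converge}(ii), together with the sign agreement $c_i(x^{k+1};\tau)=c^k_i$ for $i\in I^\c_0(x^{k+1})$, gives $\|[v(x^{k+1})]_{I^\c_0(x^{k+1})}\|_\infty\le \eps/\max(\sqrt{n\eta},1)$, and combining this with the \emph{negation of \eqref{measure}} --- not with \eqref{cg2-errbd}, which bounds $F(x)-F^*_x$ and says nothing about the $I_0$-block of $v$ --- yields $\|[v(x^{k+1})]_{I_0(x^{k+1})}\|\le \sqrt{n\eta}\,\eps/\max(\sqrt{n\eta},1)\le\eps$, so $\|v(x^{k+1})\|_\infty\le\eps$ and the method would have terminated before taking a second subroutine step. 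Hence subroutine steps interleave with line-search steps and $2\L(n)$ follows. Your substitute arguments fail on both counts: (a) the claim that $\TPCG2$ only shrinks support, $I_0(x^k)\subseteq I_0(x^{k+1})$, is false for this method, because the call \eqref{step-22} fixes only $J^k_0=I^0_0(x^k)$, so zero components indexed by $I^+_0(x^k)\cup I^-_0(x^k)$ are free to leave zero; (b) non-repetition of the faces visited by subroutine steps is not available when $\eps>0$, since the subroutine only approximately minimizes over its face, so strict descent of $F$ does not preclude revisiting a face --- and your charging scheme (``each such face preceded by at most one line-search step'') presupposes exactly the interleaving it is meant to deliver. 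Notably, you did identify the right computation --- that $\max(\sqrt{n\eta},1)$ is calibrated to absorb the $\sqrt{n}$ and $\sqrt{\eta}$ losses --- but you deployed it in part (ii), where nothing needs proving, instead of in part (i), where it is the crux.
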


\begin{proof}
(i) We first claim that $F(x^{k+1}) \le F(x^k)$ for all $k \ge 0$. Indeed, $x^{k+1}$ is generated by \eqref{step-21} or \eqref{step-22}. 
If it is generated by \eqref{step-21}, it follows from \eqref{descent} that $F(x^{k+1}) \le F(x^k)$. On  the other hand , if $x^{k+1}$ is generated by \eqref{step-22}, one can observe from the subroutine $\TPCG2$  that $F(x^{k+1}) \le F(x^k)$.

Secondly, we claim that the number of executions of \eqref{step-21} is at most $\L(n)$.
Indeed, let 
\[
\cK=\{k: x^{k+1} \ \mbox{is generated by} \ \eqref{step-21}\}.
\]
By the updating scheme of $\CG$ method 2, it can be observed that \eqref{measure} 
holds at $x^k$ for all $k\in\cK$. In addition, by the monotonicity of $\{F(x^k)\}$, it is 
clear that $F(x^k) \le F(x^0)$ for every $k\in\cK$. It then follows from Lemma 
\ref{lem:descent2} that $F(x^{k+1}) < F^*_{x^k}$ for all $k\in\cK$. By \eqref{Fxs}, we 
know that $F(x^{k+1}) \ge F^*_{x^{k+1}}$. In view of these relations and  the monotonicity of
 $\{F(x^k)\}$, one has  $F^*_{x^{j+1}} \le F(x^j) \le F(x^{i+1}) < F^*_{x^{i+1}}$ for all $i,j \in \cK$ and 
$j>i$. It follows from this relation and \eqref{Fxs} that $I_0(x^{i+1}) \notin \cI^*$ and $I_0(x^{i+1}) \not\subseteq I_0(x^{j+1})$ for all $i,j \in \cK$ and $j>i$, where $\cI^*$ is defined in \eqref{L}. By this and \eqref{L}, we see that $|\cK| \le \L(n)$ and thus this claim holds.

Thirdly, we claim that \eqref{step-22} cannot be executed at any two consecutive iterations. Suppose for contradiction that \eqref{step-22} is executed at iterations $k$ and $k+1$ for some $k\ge 0$. By the updating scheme of $\CG$ method 2, we then know that \eqref{measure} does not hold at $x^k$ and $x^{k+1}$. It follows from \eqref{step-22} and Theorem \ref{TPCG2-converge}  that
\beq \label{approx-opt}
\|\cP_{\cH(x^{k+1})}(Ax^{k+1}-b+c^k)\|_\infty \le \frac{\eps}{\max(\sqrt{n\eta},1)},
\eeq
where $\cH(\cdot)$ is defined in \eqref{Fxs}. Observe that all nonzero components of $x^{k+1}$ share the same sign as the corresponding ones of $x^k$. This together with \eqref{cx} implies that $c_i(x^{k+1};\tau) = c_i(x^k;\tau)=c^k_i$ for all $i\in I^\c_0(x^{k+1})$. Using this fact, \eqref{v-gQ} and the definition of $\cH(\cdot)$, we have
\[
\|\cP_{\cH(x^{k+1})}(Ax^{k+1}-b+c^k)\|_\infty=\|\cP_{\cH(x^{k+1})}(Ax^{k+1}-b+c^{k+1})\|_\infty =  \|\cP_{\cH(x^{k+1})}(v(x^{k+1}))\|_\infty.
\]
It follows from this, \eqref{approx-opt} and  the definition of $\cH(\cdot)$ that 
\beq \label{vI-20}
\left\|[v(x^{k+1})]_{I^\c_0(x^{k+1})}\right\|_\infty \le \frac{\eps}{\max(\sqrt{n\eta},1)} \ \le \ \eps.
\eeq
In view of this and the fact that \eqref{measure} does not hold at $k+1$, one has
\[
\ba{lcl}
\left\|[v(x^{k+1})]_{I_0(x^{k+1})}\right\|_\infty  &\le &  \left\|[v(x^{k+1})]_{I_0(x^{k+1})}\right\|
\ \le \ \sqrt{\eta} \left\|[v(x^{k+1})]_{\left(I_0(x^{k+1})\right)^\c}\right\|,  \\ [8pt]
& \le & \sqrt{n\eta} \left\|[v(x^{k+1})]_{I^\c_0(x^{k+1})}\right\|_\infty \ \le \  \frac{\eps\sqrt{n\eta}}{\max(\sqrt{n\eta},1)} \ \le \ \eps.
\ea
\]
This together with \eqref{vI-20} yields $\|v(x^{K+1})\|_\infty \le \eps$. Hence, $\CG$ method 2 terminates at $x^{k+1}$ and thus \eqref{step-22} will not be executed at iteration $k+1$, which contradicts the above supposition. Therefore, this claim holds. 

From the last claim above, one can see that \eqref{step-22} is executed at most  
once between every two adjacent executions of \eqref{step-21}. In view of this fact, the second claim above and the updating scheme of $\CG$ method 2, 
 we can conclude that it must terminate in at most $2\L(n)$ iterations.  

(ii) The proof of the second statement is similar to that of Theorem \ref{CG1-convergence}.
\end{proof}

\gap

{\bf Remark 5:} From the proof of Theorem \ref{CG2-convergence}, we know that the subroutine $\TPCG2$ is called  in $\CG2$ at most $\L(n)$ times. In view of this and Remark 3,  one can observe that when $\eps=0$, the number of PCG iterations executed within $\CG$ method 2 is at most $\L(n)(n+1)^2$. On the other hand, when $\eps>0$,  its number of PCG iterations depends on $\eps$ in $O(\log(1/\eps))$.

\vspace{.1in}

The above $\CG$ method is suitable for the case where the $\eta$ associated with \eqref{cg2-errbd} is known. From the proof of Theorem \ref{CG2-convergence}, it
can be observed that the error bound \eqref{cg2-errbd} with a known $\eta$ ensures 
that $I_0(x^i)$ is not a subset of $I_0(x^j)$ for all $I^0(x^i), I^0(x^j)\in\cC_\eta$ with 
$j>i$, where 
\beq \label{ceta}
\cC_\eta=\left\{I_0(x^k): x^k  \ \mbox{satisfies} \ \eqref{measure} \ \mbox{with the given} \  \eta.\right\}.
\eeq
When such $\eta$ is unknown, we can start with a guess of $\eta$, denoted by $\heta$. Then we run the subroutine $\CG$2 with $\eta$ replaced by $\heta$ until some $x^k$ is found with the property: (a) $\|v(x^k)\|_\infty \le \eps$ or (b) $x^k$ satisfies \eqref{measure} with $\eta$ replaced by $\heta$ and $I_0(x^k)$ is a supset of some member in $\cC_\heta$ that was previously generated, where $\cC_\heta$ is defined according to \eqref{ceta} by replacing $\eta$ by $\heta$. If (a) occurs, $x^k$ is a desired approximate solution of problem \eqref{l1-qp}. On  the other hand ,  if (b) occurs, it follows from the above observation that $\heta$ is clearly a wrong guess of $\eta$, and we need to increase $\heta$ and repeat the above process starting with $x^k$. These observations lead to the following variant of $\CG$ method 2.

\gap

\noindent {\bf A variant of $\CG$ method 2 for problem \eqref{l1-qp}: $y = \CG2_\v(A,b,\tau,x^0,\eta_0,\rho,\eps)$}

\vspace{.1in}

\noindent{\bf Input:} $A$, $b$, $\tau$, $x^0$, $\eta_0$, $\rho>1$, $\eps$.

\vspace{.1in}

\noindent Set $k=0$, $\heta=\eta_0$, $\cC=\emptyset$. 

\vspace{.1in}

\noindent {\bf Repeat}
\bi
\item[1)] If $\|v(x^k)\|_\infty \le \eps$, return $y=x^k$ and terminate.
\item[2)] $J^k_0 = I^0_0(x^k)$, $J^k_-=I_-(x^k) \cup I^-_0(x^k)$, $J^k_+=I_+(x^k) \cup I^+_0(x^k)$, $c^k = c(x^k;\tau)$.
\item[3)] If \eqref{measure} holds with $\eta=\heta$,
do 

\quad\quad\quad if $\cC \neq \emptyset$ and some member in $\cC$ is a subset of $I_0(x^k)$, set 
\beq \label{step-2v1}
\heta \leftarrow \rho \heta, \ \cC \leftarrow \emptyset, \ x^{k+1}=x^k;
\eeq
\quad\quad\quad else
\beqa
&& \cC \leftarrow \cC \cup \{I_0(x^k)\}, \nn\\
&& x^{k+1} = x^k-\alpha_k \vp(x^k), \quad \alpha_k = \frac{\|\vp(x^k)\|^2}{(\vp(x^k))^T A \vp(x^k)}; \label{step-2v2} 
\eeqa
else
\beq \label{step-2v3}
x^{k+1} = \TPCG2\left(A,b,c^k,J^k_0,J^k_-,J^k_+,x^k,\frac{\eps}{\max(\sqrt{n\heta},1)}\right).
\eeq 
\item[4)] $k \leftarrow k+1$.
\ei
{\bf Output:} $y$.

\gap

We now briefly discuss how to choose $\eta_0$ for the above method. 
When the $\eta$ associated with \eqref{cg2-errbd} is known, we simply choose $\eta_0=\eta$. It can be observed from the proof below that the variant of $\CG$ method 2 with such a choice of $\eta_0$ is identical to $\CG$ method 2. In addition, when $A$ is symmetric positive definite, we see from  Theorem \ref{errbdd-F} that $\eta=\kappa(A)$ satisfies the error bound \eqref{cg2-errbd}. Thus, it is reasonable to choose $\eta_0=\kappa(A)$ when 
the $\eta$ associated with \eqref{cg2-errbd} is unknown.   

We next show that the variant of $\CG$ method 2 terminates in a finite number of iterations, and moreover it terminates at an optimal solution of problem \eqref{l1-qp} when $\eps=0$.

\begin{theorem} \label{CG2v-convergence}
Under Assumption \ref{assump}, the following statements hold:
\bi
\item[(i)]
 The variant of $\CG$ method 2 terminates in at most 
\beq \label{N}
N = \max\left(\left\lceil \frac{\log \eta^* - \log \eta_0}{\log \rho}\right\rceil+1,1\right)(2^{n+1}-1)
\eeq
 iterations, where $\eta^*$ is the smallest $\eta$ satisfying \eqref{cg2-errbd}.
\item[(ii)] Let $y$ be the output of the variant of $\CG$ method 2. Then $s \in \partial F(y)$
for some $s$ with $\|s\|_\infty \le \eps$, and moreover, $y$ is an optimal solution of
problem \eqref{l1-qp} if $\eps=0$.
\ei
\end{theorem}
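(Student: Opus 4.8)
The plan is to mirror the architecture of the proof of Theorem~\ref{CG2-convergence}, decoupling two effects that the adaptive parameter $\heta$ now entangles: the progress made while $\heta$ is held fixed, and the number of times $\heta$ is inflated. First I would record monotonicity of the objective: a line-search update \eqref{step-2v2} strictly decreases $F$ by Lemma~\ref{lem:descent1}, a $\TPCG2$ update \eqref{step-2v3} does not increase $F$ by Theorem~\ref{TPCG2-converge}, and an inflation update \eqref{step-2v1} leaves $x^{k+1}=x^k$; hence $F(x^k)\le F(x^0)$ for all $k$, which is precisely the hypothesis under which the error bound \eqref{cg2-errbd} and Lemma~\ref{lem:descent2} become available. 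I would then split the run into \emph{phases}, a phase being a maximal block of iterations over which $\heta$ is constant, and bound the total count as (number of phases) times (iterations per phase).

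For the per-phase bound I would argue purely from the method's bookkeeping, deliberately \emph{without} invoking the error bound, since while $\heta<\eta^*$ the latter may fail. Within a phase, \eqref{step-2v2} is entered only when no current member of $\cC$ is a subset of $I_0(x^k)$, and it then appends $I_0(x^k)$ to $\cC$; therefore the index sets recorded at line-search steps satisfy $I_0(x^{k_i})\not\subseteq I_0(x^{k_j})$ whenever $k_i<k_j$, so they are pairwise distinct, and each is nonempty since \eqref{measure} forces $[v(x^k)]_{I_0(x^k)}\neq 0$. This limits the line-search steps of a phase to $2^n-1$. Next I would reprove, exactly as in the third claim of the proof of Theorem~\ref{CG2-convergence} but with $\eta$ replaced by the current $\heta$, that two $\TPCG2$ steps cannot occur consecutively: after a $\TPCG2$ step the tolerance $\eps/\max(\sqrt{n\heta},1)$ yields $\|[v(x^{k+1})]_{I^\c_0(x^{k+1})}\|_\infty\le\eps$, and if \eqref{measure} fails at $x^{k+1}$ then also $\|[v(x^{k+1})]_{I_0(x^{k+1})}\|_\infty\le\eps$, so step~1 would terminate before a second $\TPCG2$ step. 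As this holds for every $\heta$, interleaving at most $2^n-1$ line-search steps with non-adjacent $\TPCG2$ steps caps the productive iterations of a phase at $2(2^n-1)+1=2^{n+1}-1$.

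For the number of phases I would show that inflation stops once $\heta\ge\eta^*$. Since \eqref{cg2-errbd} holds for every constant at least $\eta^*$, when $\heta\ge\eta^*$ Lemma~\ref{lem:descent2} applies with $\eta:=\heta$ at each line-search iterate, giving $F(x^{k+1})<F^*_{x^k}$; combined with monotonicity as in the second claim of the proof of Theorem~\ref{CG2-convergence}, this yields $F^*_{x^k}<F^*_{x^i}$ for any later iterate $x^k$ and earlier line-search iterate $x^i$, whence $I_0(x^i)\not\subseteq I_0(x^k)$ (otherwise $\cH(x^k)\subseteq\cH(x^i)$ would force $F^*_{x^k}\ge F^*_{x^i}$). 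Consequently the subset test guarding \eqref{step-2v1} can never fire while $\heta\ge\eta^*$, so no further inflation occurs. As each inflation multiplies $\heta$ by $\rho>1$ from the starting value $\eta_0$, the number of inflations is at most $\lceil(\log\eta^*-\log\eta_0)/\log\rho\rceil$, making the number of phases at most $N/(2^{n+1}-1)$ with $N$ as in \eqref{N}; multiplying by the per-phase bound gives (i). Statement~(ii) is identical to Theorem~\ref{CG2-convergence}(ii): termination can occur only in step~1, where $\|v(y)\|_\infty\le\eps$ and $v(y)\in\partial F(y)$, so $s=v(y)$ works and $0\in\partial F(y)$ when $\eps=0$.

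The hard part will be this clean separation of roles. The per-phase termination must be established combinatorially, because for $\heta<\eta^*$ the error bound need not hold and Lemma~\ref{lem:descent2} is unavailable, so the non-inclusion property of the recorded index sets is guaranteed only by the explicit subset test and not by any descent estimate; the error bound re-enters solely to force inflations to cease, and there the delicate point is to apply the inclusion argument to the \emph{pre-inflation} iterate $x^k$, which satisfies \eqref{measure} yet has not been placed in $\cC$, in order to rule out its index set containing a recorded member. Finally, matching the stated count $N$ exactly requires care in how the inflation steps \eqref{step-2v1}, which make no progress in $x$, are charged against the per-phase budget, and I expect this bookkeeping, rather than any analytic estimate, to be the most fiddly ingredient.
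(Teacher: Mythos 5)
Your proposal follows essentially the same route as the paper's proof: monotonicity of $\{F(x^k)\}$, a per-$\heta$ (phase) decomposition in which the members of $\cC$ are distinct nonempty subsets of $\{1,\ldots,n\}$ (so at most $2^n-1$ executions of \eqref{step-2v2} per phase), non-adjacency of \eqref{step-2v3} steps established exactly as in the third claim of the proof of Theorem \ref{CG2-convergence} with $\eta$ replaced by $\heta$, and cessation of inflations once $\heta \ge \eta^*$ via Lemma \ref{lem:descent2} together with the comparison of the values $F^*_{x}$, giving at most $\max\left(\lceil(\log\eta^*-\log\eta_0)/\log\rho\rceil+1,1\right)$ phases of at most $1+2(2^n-1)=2^{n+1}-1$ counted steps each. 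The one issue you flag---how the no-progress inflation step \eqref{step-2v1} is charged against the per-phase budget---is resolved in the paper simply by the tally $1+2(2^n-1)$ per distinct $\heta$, a point the paper itself treats informally (``it is not hard to show''), so your accounting matches the published argument.
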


\begin{proof}
(i) Let $\eta^*$ be the smallest $\eta$ satisfying \eqref{cg2-errbd}. By the monotonicity of $\{F(x^k)\}$ and a similar argument as in the proof of Theorem \ref{CG2-convergence},  one can show that if $\heta \ge \eta^*$ at some iteration $K$, then $I_0(x^i) \not\subseteq I_0(x^k)$ for all $I_0(x^i) \in \cC$ with $i<k$ and every $k \ge K$ such that \eqref{measure} holds at $x^k$ with $\eta=\heta$. 
Thus $\heta$ will no longer be updated for all $k \ge K$. By this fact and the updating scheme of the variant of $\CG$ method 2,  it is not hard to show that $\heta$ can be updated by \eqref{step-2v1} in at most $\max(\lceil (\log \eta^* - \log \eta_0)/\log \rho\rceil,0)$ times. Hence, the number of distinct $\heta$ 
arising in this method is at most  $\max(\lceil (\log \eta^* - \log \eta_0)/\log \rho \rceil,0)+1$. Observe that if \eqref{measure} holds at some $x^k$, then $I_0(x^k) \neq \emptyset$.  In view of this and the updating scheme of $\cC$, one can see that if $\cC \neq \emptyset$, then 
 all members of $\cC$ are distinct nonempty subsets of $\{1,\ldots,n\}$.  It follows that   
for each  $\heta$, the number of members of $\cC$ is at most $2^n-1$ and hence 
the number of executions of \eqref{step-2v2} is at most $2^n-1$.  By this fact and a similar argument as in the proof of Theorem \ref{CG2-convergence}, one can show that for each $\heta$, the number of executions of \eqref{step-2v3} is also at most $2^n-1$. Thus for each $\heta$, the total 
number of executions of \eqref{step-2v1}, \eqref{step-2v2} and \eqref{step-2v3} is at most 
$1+2(2^n-1)=2^{n+1}-1$. The conclusion of the first statement immediately follows from these facts.

(ii) The proof of the second statement is similar to that of Theorem \ref{CG1-convergence}.
\end{proof}

\vspace{.05in}

{\bf Remark 6:} From the proof of Theorem \ref{CG2v-convergence}, we know that the subroutine $\TPCG2$ is called in $\CG2_v$ at most $\max(\lceil (\log \eta^* - \log \eta_0)/\log \rho \rceil+1,1)(2^n-1)$ times. In view of this and Remark 3,  one can observe that when $\eps=0$, the number of PCG iterations executed in $\CG2_v$ is at most $\max(\lceil (\log \eta^* - \log \eta_0)/\log \rho \rceil+1,1)(2^n-1)(n+1)^2$. On the other hand, when $\eps>0$,  the number of PCG iterations executed in this method depends on $\eps$ in $O(\log(1/\eps))$.

\subsection{The third generalized conjugate gradient method for \eqref{l1-qp}}
\label{3rd-cg}

Notice that the subroutine $\TPCG2$ is used in the variant of $\CG$ method 2. 
Given that $\TPCG2$ is generally more expensive than the subroutine $\TPCG1$, 
a natural question is whether one could replace $\TPCG2$ by $\TPCG1$ there. 
In what follows, we propose a third $\CG$ method by performing such a replacement  and also modifying the associated $J^k_0$, $J^k_-$ and $J^k_+$ accordingly. 

%

\gap

\noindent {\bf $\CG$ method 3 for problem \eqref{l1-qp}: $y = \CG3(A,b,\tau,x^0,\eta_0,\rho,\eps)$}

\vspace{.04in}

\noindent{\bf Input:} $A$, $b$, $\tau$, $x^0$,  $\eta_0$, $\rho>1$,  $\eps$.

\vspace{.04in}

\noindent Set $k=0$, $\heta=\eta_0$, $\cC=\emptyset$. 

\vspace{.04in}

\noindent {\bf Repeat}
\bi
\item[1)] If $\|v(x^k)\|_\infty \le \eps$, return $y=x^k$ and terminate.
\item[2)] $J^k_0 = I_0(x^k)$, $J^k_-=I_-(x^k)$, $J^k_+=I_+(x^k)$,
$c^k = c(x^k;\tau)$.
\item[3)] If \eqref{measure} holds with $\eta=\heta$,
do 

\quad\quad\quad if $\cC \neq \emptyset$ and some member in $\cC$ is a subset of $I_0(x^k)$, set 
\beq \label{step-31}
\heta \leftarrow \rho \heta, \ \cC \leftarrow \emptyset, \ x^{k+1}=x^k;
\eeq
\quad\quad\quad else
\beqa
&& \cC \leftarrow \cC \cup \{I_0(x^k)\}, \nn \\
&& x^{k+1} = x^k-\alpha_k \vp(x^k), \quad \alpha_k = \frac{\|\vp(x^k)\|^2}{(\vp(x^k))^T A \vp(x^k)}; \label{step-33}
\eeqa
else
\beq
 x^{k+1} = \TPCG1\left(A,b,c^k,J^k_0,J^k_-,J^k_+,x^k,\frac{\eps}{\max(\sqrt{n\heta},1)}\right)
 \label{subprob3}.
\eeq
\item[4)] $k \leftarrow k+1$.
\ei
{\bf Output:} $y$.

\vspace{.1in}

{\bf Remark 7:} The parameter $\eta_0$ for this method can be chosen similarly as for the variant of $\CG$ method 2. In particular, when the $\eta$ associated with \eqref{cg2-errbd} is known, we choose $\eta_0=\eta$. Otherwise, we can choose $\eta_0=\kappa(A)$. 

\vspace{.1in}

We next show that $\CG$ method 3 terminates in a finite number of iterations, and moreover it terminates at an optimal solution of problem \eqref{l1-qp} when $\eps=0$.

\begin{theorem} \label{CG3-convergence}
Under Assumption \ref{assump}, the following statements hold:
\bi
\item[(i)] $\CG$ method 3 terminates in at most 
\[
\max\left(\left\lceil \frac{\log \eta^* - \log \eta_0}{\log \rho}\right\rceil+1,1\right)(1+(n+2)(2^n-1))
\]
 iterations, where $\eta^*$ is the smallest $\eta$ satisfying \eqref{cg2-errbd}.
\item[(ii)] Let $y$ be the output of $\CG$ method 3. Then $s \in \partial F(y)$
for some $s$ with $\|s\|_\infty \le \eps$, and moreover, $y$ is an optimal solution of
problem \eqref{l1-qp} if $\eps=0$.
\ei
\end{theorem}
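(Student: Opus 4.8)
The plan is to follow the template of the proof of Theorem~\ref{CG2v-convergence}, isolating the two features that are genuinely new in $\CG$ method~3: the nonzero components are now updated by a single call to $\TPCG1$ (rather than $\TPCG2$), so such a call need not drive the face gradient small but may merely cross the boundary of the current face, and the index sets $J^k_0,J^k_-,J^k_+$ are taken directly from $I_0,I_-,I_+$. Statement~(ii) is immediate and identical to Theorem~\ref{CG1-convergence}(ii): the method returns only at step~1), so the output $y$ obeys $\|v(y)\|_\infty\le\eps$; since $v(y)\in\partial F(y)$ always, (ii) holds with $s=v(y)$, and $\eps=0$ forces $v(y)=0$, i.e.\ $0\in\partial F(y)$, which is optimality for \eqref{l1-qp}. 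Each $\TPCG1$ call terminates by Theorem~\ref{TPCG1-converge} (covering both the bounded and unbounded face subproblems), so the method is well defined.

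For (i) I would first record monotonicity $F(x^{k+1})\le F(x^k)$: step \eqref{step-31} leaves $x^k$ unchanged, step \eqref{step-33} is the exact line search of Lemma~\ref{lem:descent1}, and along the $\TPCG1$ iterates of \eqref{subprob3} the quadratic $Q(\cdot;x^k)$ is nonincreasing and coincides with $F$ on the feasible face (by the sign-consistency computation giving $c(x^k;\tau)^Tx=\tau\|x\|_1$ there). Next, exactly as in Theorem~\ref{CG2v-convergence}(i), once $\heta\ge\eta^*$ the error bound \eqref{cg2-errbd} together with Lemma~\ref{lem:descent2} forces every line-search iterate to strictly lower the face value $F^*_{x^k}$, so no previously recorded member of $\cC$ can be a subset of a later $I_0(x^k)$; hence the enlargement rule \eqref{step-31} never fires once $\heta\ge\eta^*$, and the number of distinct values of $\heta$ is at most $\max(\lceil(\log\eta^*-\log\eta_0)/\log\rho\rceil,0)+1$, which supplies the outer factor in the bound.

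It remains to bound, for each fixed value of $\heta$, the number of iterations. Line-search steps \eqref{step-33} record pairwise distinct nonempty subsets of $\{1,\dots,n\}$ into $\cC$, so there are at most $2^n-1$ of them, and \eqref{step-31} fires at most once. The crux is counting the $\TPCG1$ steps \eqref{subprob3}, which I would do by analyzing a maximal run of consecutive such steps. For a run to continue past an iterate $x^{k+1}$, condition \eqref{measure} must fail there; but if that $\TPCG1$ call had stopped at an approximate face-minimizer rather than at the boundary, then (using $c^k_i=c_i(x^{k+1};\tau)$ on $I^\c_0(x^{k+1})$ and \eqref{v-gQ}) one gets $\|[v(x^{k+1})]_{I^\c_0(x^{k+1})}\|_\infty\le\eps/\max(\sqrt{n\heta},1)$, and combining this with the failure of \eqref{measure} yields $\|v(x^{k+1})\|_\infty\le\eps$, forcing termination --- the same contradiction used in the third claim of the proof of Theorem~\ref{CG2-convergence}. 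Hence every interior iterate of a run arises from $\TPCG1$ crossing the face boundary, which strictly enlarges $I_0$; since $|I_0|$ ranges over $\{0,\dots,n\}$, each run has length at most $n+1$. A charging argument that assigns each run to the $\cC$-recording (line search) that terminates it then bounds the number of runs by $2^n-1$, and hence the $\TPCG1$ count by $(n+1)(2^n-1)$. Adding the $\le 2^n-1$ line searches and the single enlargement gives the per-$\heta$ total $1+(2^n-1)+(n+1)(2^n-1)=1+(n+2)(2^n-1)$, and multiplying by the number of $\heta$-values yields the claim.

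The main obstacle is precisely the $\TPCG1$ count: unlike in $\CG$ method~2, one step need not reach an approximate minimizer of the face problem, so the clean ``every other step releases a zero'' structure of Theorem~\ref{CG2-convergence} is lost and must be replaced by the run analysis. The two delicate points are (a) ruling out an \emph{interior} approximate-minimizer termination within a run, so that all interior steps are boundary crossings --- this is where the tolerance $\eps/\max(\sqrt{n\heta},1)$ chosen in \eqref{subprob3} is essential --- and (b) the bookkeeping that pairs each run with a later recording in $\cC$ without over-counting, which is what pins down the exact constant $1+(n+2)(2^n-1)$ rather than merely a bound of the same order.
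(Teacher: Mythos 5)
Your proposal matches the paper's proof essentially step for step: the same monotonicity claim; the same contradiction showing that $n+2$ consecutive executions of \eqref{subprob3} are impossible (an interior approximate-minimizer stop, via the sign consistency $c^k_i=c_i(x^{k+1};\tau)$ on $I^\c_0(x^{k+1})$ and the tolerance $\eps/\max(\sqrt{n\heta},1)$, would combine with the failure of \eqref{measure} to give $\|v(x^{k+1})\|_\infty\le\eps$ and force termination, so interior steps are boundary crossings that strictly enlarge $I_0$); and the same per-$\heta$ counts (at most $2^n-1$ executions of \eqref{step-33}, one of \eqref{step-31}, and $(n+1)(2^n-1)$ calls to $\TPCG1$, totalling $1+(n+2)(2^n-1)$) multiplied by the bound $\max(\lceil(\log\eta^*-\log\eta_0)/\log\rho\rceil,0)+1$ on the number of distinct $\heta$. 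Even your slight bookkeeping looseness in charging each run of $\TPCG1$ steps to a line search mirrors the paper's own ``between every two adjacent executions'' phrasing, so the argument is correct and essentially identical to the published proof.
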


\begin{proof}
(i) We first claim that $F(x^{k+1}) \le F(x^k)$ for all $k\ge 0$. Indeed, if 
$x^{k+1}$ is generated by \eqref{step-31}, $F(x^{k+1})=F(x^k)$. In 
addition, if $x^{k+1}$ is updated by \eqref{step-33}, it follows from Lemma \ref{lem:descent1} that $F(x^{k+1}) \le F(x^k)$. On  the other hand , if $x^{k+1}$ is generated by \eqref{subprob3}, one can observe from the subroutine $\TPCG1$ that 
$F(x^{k+1}) \le F(x^k)$. 

Secondly, we claim that \eqref{subprob3} cannot be executed at any $n+2$ 
consecutive iterations. Suppose for contradiction that \eqref{subprob3} is executed at iterations $i, i+1, \ldots, i+n+1$ iterations for some $i\ge 0$. This along with the updating scheme of $\CG$ method 3 implies that \eqref{measure} with $\eta=\heta$ does not hold at these $n+2$ iterations. We now show that the subroutine $\TPCG1$ executed in \eqref{subprob3}  must terminate at a boundary point $x^{k+1}$ 
of the feasible region of problem \eqref{sub-qp} with $c=c^k$, $J_0=J^k_0$, $J_-=J^k_-$ and $J_+=J^k_+$ for all $i \le k \le i+n$. Suppose not. By the termination 
criteria of $\TPCG1$, there exists some $i \le k\le i+n$ such that 
\beq \label{x-opt}
\|\cP_\cH(Ax^{k+1}-b+c^k)\|_\infty \le \frac{\eps}{\max(\sqrt{n\heta},1)},
\eeq
where
\[
\cH = \{x\in\Re^n: x_j = 0, \ j \in J^k_0\}.
\]
Observe from the subroutine $\TPCG1$ that for such $k$, $J^k_0 = I_0(x^k) \subseteq I_0(x^{k+1})$ and moreover the nonzero components of $x^{k+1}$ share the same sign as the corresponding ones of $x^k$, which implies 
\[
c_i(x^{k+1};\tau)=c_i(x^k;\tau) = c^k_i, \quad\forall i \notin I_0(x^{k+1}).
\]
Using these relations and \eqref{x-opt}, one can have
\[
\|\cP_\hcH(Ax^{k+1}-b+c(x^{k+1};\tau))\|_\infty \le \frac{\eps}{\max(\sqrt{n\heta},1)},
\]
where
\[
\hcH = \{x\in\Re^n: x_i = 0, \ i \in I_0(x^{k+1})\}.
\]
It follows from this and \eqref{v} that
\[
\left\|[v(x^{k+1})]_{I^\c_0(x^{k+1})}\right\|_\infty \le \frac{\eps}{\max(\sqrt{n\heta},1)} \le \eps.
\]
In view of this,  the fact that \eqref{measure} with $\eta=\heta$ 
does not hold at $x^{k+1}$, and a similar argument as in the proof of 
Theorem \ref{CG2-convergence}, one can show that  
$\|[v(x^{k+1})]_{I_0(x^{k+1})}\|_\infty  \le  \eps$. It then follows that  
$\|v(x^{k+1})\|_\infty \le \eps$, which implies that $\CG$ method 3 
terminates at $x^{k+1}$ and thus \eqref{subprob3} will no longer be executed 
at iteration $k+1$. This contradicts the second supposition above. 
Therefore, TPCG1 must terminate at a boundary point $x^{k+1}$ 
of the feasible region of problem \eqref{sub-qp} with $c=c^k$, $J_0=J^k_0$, $J_-=J^k_-$ and $J_+=J^k_+$ for all $i \le k \le i+n$. This together with the definition of $J^k_0$, $J^k_-$ and $J^k_+$ implies that $I_0(x^k) \subsetneq I_0(x^{k+1})$ and 
hence $|I_0(x^k)| < |I_0(x^{k+1})|$ for all $i \le k \le i+n$, which leads to $|I_0(x^{i+n+1})| \ge n+1$ and contradicts the trivial fact $|I_0(x^{i+n+1})| \le n$. 
Thus the above claim holds. 

From the second claim above, we can see that for each $\heta$, \eqref{subprob3} is executed at most $n+1$ times between every two adjacent executions of \eqref{step-33}. In addition, by the monotonicity of $\{F(x^k)\}$ and a similar argument as in the proof of Theorem \ref{CG2-convergence},  one can show that for each $\heta$, the number of executions of \eqref{step-33} is at most $2^n-1$. Therefore, for each $\heta$ the number of executions of  \eqref{subprob3} is at most $(n+1)(2^n-1)$. 
It follows that for each $\heta$, the total number of executions of \eqref{step-31}, \eqref{step-33} and 
\eqref{subprob3} is at most $1+(n+2)(2^n-1)$. Also, by a similar argument as in the 
proof of Theorem \ref{CG2v-convergence},  we know that  the number of 
distinct $\heta$ arising in this method is at most  $\max(\lceil (\log \eta^* - \log \eta_0)/\log \rho \rceil,0)+1$. The conclusion of the first statement immediately follows from these facts.

(ii) The proof of the second statement is similar to that of Theorem \ref{CG1-convergence}.
\end{proof}

\gap

{\bf Remark 8:} From the proof of Theorem \ref{CG3-convergence}, one knows that the subroutine $\TPCG1$ is called in $\CG3$ at most $(n+1)\max(\lceil (\log \eta^* - \log \eta_0)/\log \rho \rceil+1,1)(2^n-1)$ times. In view of this and Remark 2,  we can observe that when $\eps=0$, the number of PCG iterations executed in $\CG3$ is at most $\max(\lceil (\log \eta^* - \log \eta_0)/\log \rho \rceil+1,1)(2^n-1)(n+1)^2$. On the other hand, when $\eps>0$,  its number of PCG iterations  depends on $\eps$ in $O(\log(1/\eps))$.

\subsection{The fourth generalized conjugate gradient method for \eqref{l1-qp}}
\label{4th-cg}

In this subsection we propose the fourth $\CG$ method for problem \eqref{l1-qp}, which enhances $\CG$ method 3 by incorporating a  
proximal gradient scheme. In particular, we perform a proximal gradient step over a subspace immediately after executing the subroutine  $\TPCG1$, which makes the iterates cross orthants more rapidly and 
also shrinks some nonzero components of the iterates to zero. This 
strategy is similar to the one proposed by Dostal and Sch\"oberl \cite{Dostal2} for solving a box-constrained convex QP.


\gap

\noindent {\bf $\CG$ method 4 for problem \eqref{l1-qp}: $y = \CG4(A,b,\tau,x^0,t,\eta_0,\rho,\xi,\eps)$}

\vspace{.1in}

\noindent{\bf Input:} $A$, $b$, $\tau$,  $x^0$, $t\in (0,2/\|A\|)$, $\eta_0$, $\rho>1$, $0<\xi<1$, $\eps$. 

\vspace{.1in}

\noindent Set $k=0$, $\heta=\eta_0$, $\heps=\eps$, $\cC=\emptyset$. 

\vspace{.1in}

\noindent {\bf Repeat}
\bi
\item[1)] If $\|v(x^k)\|_\infty \le \eps$, return $y=x^k$ and terminate.
\item[2)] $J^k_0 = I_0(x^k)$, $J^k_-=I_-(x^k)$, $J^k_+=I_+(x^k)$,
$c^k = c(x^k;\tau)$.
\item[3)] If \eqref{measure} holds with $\eta=\heta$,
do 

\quad\quad\quad set $\heps \leftarrow \eps$;

\quad\quad\quad if $\cC \neq \emptyset$ and some member in $\cC$ is a subset of $I_0(x^k)$, set 
\beq \label{step-41}
\heta \leftarrow \rho \heta, \ \cC \leftarrow \emptyset, \ x^{k+1}=x^k;
\eeq
\quad\quad\quad else
\beqa
&& \cC \leftarrow \cC \cup \{I_0(x^k)\}, \label{step-42} \\
&& x^{k+1} = x^k-\alpha_k \vp(x^k), \quad \alpha_k = \frac{\|\vp(x^k)\|^2}{(\vp(x^k))^T A \vp(x^k)}; \label{step-43}
\eeqa
else
\beqa
&& y^{k+1} = \TPCG1\left(A,b,c^k,J^k_0,J^k_-,J^k_+,x^k,\frac{\sqrt{(2t^{-1}-\|A\|)\|A\|}\ \heps}{(t^{-1}+\|A\|)\sqrt{n\heta}}\right), \label{step-44} \\
&& x^{k+1} = \arg\min\left\{\frac12\|x-(y^{k+1}-t(A y^{k+1}-b))\|^2+t \tau\|x\|_1: x \in \cH(y^{k+1})\right\},  
\label{step-45} \\
&& \heps \leftarrow \xi \heps. \nn 
\eeqa
\item[4)] $k \leftarrow k+1$.
\ei
{\bf Output:} $y$.

\gap

{\bf Remark:} (a) When the $\eta$ associated with \eqref{cg2-errbd} is known, one can  choose $\eta_0=\eta$. 
Otherwise, one can choose $\eta_0=\kappa(A)$.

(b) It is not hard to see that subproblem \eqref{step-45} has a closed-form solution, which is given as follows:
\[
x^{k+1}_i = \left\{
\ba{ll}
\sgn(a_i) \max(|a_i|-t\tau,0)  & \mbox{if} \ i \notin I_0(y^{k+1}), \\ [5pt]
0 & \mbox{otherwise},
\ea\right. \quad i = 1,\ldots, n,
\]
where $a_i=y^{k+1}_i-t((A y^{k+1})_i-b_i)$ for $i = 1,\ldots, n$.

\gap

We next show that $\CG$ method 4 terminates in a finite number of iterations, and moreover it terminates at an optimal solution of problem \eqref{l1-qp} when $\eps=0$.

\begin{theorem} \label{CG4-convergence}
Under Assumption \ref{assump}, the following statements hold:
\bi
\item[(i)]
 $\CG$ method 4 terminates in at most 
\[
\max\left(\left\lceil \frac{\log \eta^* - \log \eta_0}{\log \rho}\right\rceil+1,1\right)(1+(M+1)(2^n-1)),
\]
 iterations, where $\eta^*$ is the smallest $\eta$ satisfying \eqref{cg2-errbd} and 
\beq \label{M}
M = \max\left(\left\lceil \frac{\max(\log \eta_0 - \log \eta^*,-\log\eta^*)}{2\log \xi}\right\rceil+n+1,n+1\right).
\eeq
\item[(ii)] Let $y$ be the output of $\CG$ method 4. Then $s \in \partial F(y)$
for some $s$ with $\|s\|_\infty \le \eps$, and moreover, $y$ is an optimal solution of
problem \eqref{l1-qp} if $\eps=0$.
\ei
\end{theorem}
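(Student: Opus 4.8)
The plan is to follow the template of Theorems \ref{CG2v-convergence} and \ref{CG3-convergence}, since $\CG$ method 4 differs from $\CG$ method 3 only by appending the proximal-gradient step \eqref{step-45} after each call to $\TPCG1$ and by geometrically contracting the inner tolerance through $\heps \leftarrow \xi \heps$. Statement (ii) needs no new argument: the method exits only through step~1), so the output $y$ satisfies $\|v(y)\|_\infty \le \eps$ with $v(y) \in \partial F(y)$; taking $s=v(y)$ proves the first part, and $v(y)=0$ (hence $y\in\cS^*$) when $\eps=0$, exactly as in Theorem \ref{CG1-convergence}(ii). For (i) I would first dispatch three routine ingredients. \emph{Monotonicity} $F(x^{k+1})\le F(x^k)$: equality for \eqref{step-41}; Lemma \ref{lem:descent1} for \eqref{step-43}; and for the else branch the chain $F(x^{k+1})\le F(y^{k+1})\le F(x^k)$, where $F(y^{k+1})\le F(x^k)$ holds because $q$ coincides with $F$ on the current face and is non-increasing along $\TPCG1$, while
\[
F(x^{k+1}) \le F(y^{k+1}) - \tfrac12\big(2t^{-1}-\|A\|\big)\,\|x^{k+1}-y^{k+1}\|^2
\]
is the standard proximal-gradient descent inequality, valid since $t\in(0,2/\|A\|)$ and $\nabla f$ is $\|A\|$-Lipschitz. \emph{Release count}: for each fixed $\heta$, \eqref{step-42}--\eqref{step-43} runs at most $2^n-1$ times, by the Lemma \ref{lem:descent2} argument of Theorem \ref{CG2-convergence}, which forces the sets $I_0(x^k)$ placed in $\cC$ to be distinct nonempty subsets of $\{1,\dots,n\}$. \emph{Distinct $\heta$}: at most $\max(\lceil(\log\eta^*-\log\eta_0)/\log\rho\rceil,0)+1$, since once $\heta\ge\eta^*$ the error bound \eqref{cg2-errbd} holds with $\heta$ and the Theorem \ref{CG2v-convergence} argument freezes $\heta$.

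The crux is to show that, for each fixed $\heta$, the else branch \eqref{step-44}--\eqref{step-45} runs at most $M$ times between two consecutive releases. I would start from the exact subgradient produced by the prox step: the optimality condition of \eqref{step-45} over $\cH(y^{k+1})$ gives, for every $i\in I^\c_0(x^{k+1})$,
\[
v_i(x^{k+1}) = \big[(A-t^{-1}I)(x^{k+1}-y^{k+1})\big]_i ,
\]
so that $\|[v(x^{k+1})]_{I^\c_0(x^{k+1})}\| \le (t^{-1}+\|A\|)\|x^{k+1}-y^{k+1}\|$, using $\|A-t^{-1}I\|\le t^{-1}+\|A\|$. When $\TPCG1$ stops by its tolerance rather than by crossing $\partial\Omega$, its stopping rule bounds $\|[v(y^{k+1})]_{I^\c_0(y^{k+1})}\|\le\sqrt{n}\,(\mbox{tol})$; feeding this into the error bound \eqref{cg2-errbd} (with the true constant $\eta^*$) bounds $F(y^{k+1})-F^*_{y^{k+1}}$, and since $x^{k+1}\in\cH(y^{k+1})$ gives $F(x^{k+1})\ge F^*_{y^{k+1}}$, the displayed descent inequality bounds $\|x^{k+1}-y^{k+1}\|$. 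The precise constant in the tolerance of \eqref{step-44} is engineered exactly so that these estimates telescope into
\[
\big\|[v(x^{k+1})]_{I^\c_0(x^{k+1})}\big\| \le \heps\sqrt{\eta^*/\heta}.
\]

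With this in hand I would bound the phase length. Within a maximal run of consecutive else steps the supports $I_0(x^k)$ are non-decreasing ($\TPCG1$ only creates zeros and the prox step lives in $\cH(y^{k+1})$), so at most $n+1$ of these steps can enlarge $I_0$, whether through a $\TPCG1$ boundary crossing or through the prox step driving a coordinate to zero. For every other (support-preserving) else step the displayed estimate applies, and combining it with the failure of \eqref{measure} at $x^{k+1}$, namely $\|[v(x^{k+1})]_{I_0(x^{k+1})}\|\le\sqrt{\heta}\,\|[v(x^{k+1})]_{I^\c_0(x^{k+1})}\|$, yields $\|v(x^{k+1})\|_\infty \le \heps\sqrt{\eta^*/\min(\heta,1)}$; as $\heps$ decays geometrically along the phase, this right-hand side drops below $\eps$ within a number of steps controlled by the ceiling term in \eqref{M}, triggering termination at step~1). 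Adding the $\le n+1$ support-enlarging steps gives the per-phase bound $M$. Finally I would assemble the three counts as in Theorem \ref{CG3-convergence}, namely (number of distinct $\heta$)$\,\times\,(1+(M+1)(2^n-1))$, which is the stated iteration bound.

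The main obstacle is the quantitative heart of the crux, namely verifying that the explicit tolerance in \eqref{step-44} telescopes into $\|[v(x^{k+1})]_{I^\c_0(x^{k+1})}\|\le\heps\sqrt{\eta^*/\heta}$. This requires tracking the soft-threshold displacement coordinatewise, separating the coordinates that \eqref{step-45} keeps on the current face from those it drives to zero, and checking that the descent factor $\tfrac12(2t^{-1}-\|A\|)$, the residual factor $(t^{-1}+\|A\|)$, and the error-bound factor $\eta^*/\|A\|$ cancel against the chosen tolerance. Matching the resulting geometric-decay count against the precise $\eta_0$- and $\eta^*$-dependent expression \eqref{M} — in particular tracking how the bound behaves over the range of $\heta$ values that actually occur — is the accompanying delicate bookkeeping, but it is forced once the displayed estimate is established.
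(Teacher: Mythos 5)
Your proposal is correct and follows essentially the same route as the paper's proof: monotonicity via the proximal descent inequality \eqref{F-descent1}, the per-$\heta$ counts of releases ($2^n-1$) and of distinct $\heta$ values, and the crucial run-length bound obtained by combining the engineered tolerance in \eqref{step-44} with the error bound \eqref{cg2-errbd} at $\eta^*$, the inequality $F(x^{k+1})\ge F^*_{y^{k+1}}$, and the optimality condition of \eqref{step-45} to reach exactly the paper's estimate \eqref{vI0c}, after which at most $n+1$ boundary terminations of $\TPCG1$ force the contradiction. Your coordinatewise identity $v_i(x^{k+1})=[(A-t^{-1}I)(x^{k+1}-y^{k+1})]_i$ on $I^\c_0(x^{k+1})$ is just a cleaner restatement of the paper's projected-subdifferential bound $\dist(0,\cP_\hcH(\nabla f(x^{k+1})+\tau\partial\|x^{k+1}\|_1))\le(t^{-1}+\|A\|)\|x^{k+1}-y^{k+1}\|$, so the two arguments coincide in substance.
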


\begin{proof}
(i) We first claim that $F(x^{k+1}) \le F(x^k)$ for all $k \ge 0$. By the same argument as in the proof of Theorem \ref{CG3-convergence}, one 
can see that this claim holds if $x^{k+1}$ is generated by \eqref{step-41} or \eqref{step-42}.  We now show that it also holds if $x^{k+1}$ is obtained by \eqref{step-45}. Indeed, by the definition of $f$,  it is not hard to observe that 
\beq \label{lip-ineq}
f(x^{k+1}) \le f(y^{k+1}) + \langle \nabla f(y^{k+1}), x^{k+1}-y^{k+1} \rangle +
\frac{\|A\|}{2} \|x^{k+1}-y^{k+1}\|^2.
\eeq
In addition, notice that the objective of \eqref{step-45} is strongly convex. Then one can see from \eqref{step-45} that for all $x \in \cH(y^{k+1})$, 
\beqa
\frac12\|x-(y^{k+1}-t(A y^{k+1}-b))\|^2+t \tau\|x\|_1 & \ge &  \frac12\|x^{k+1}-(y^{k+1}-t(A y^{k+1}-b))\|^2  \nn \\
&& +t \tau\|x^{k+1}\|_1 + \frac12 \|x-x^{k+1}\|^2.  \label{err-bdd}
\eeqa
Substituting $x=y^{k+1}$ into \eqref{err-bdd}, using $\nabla f(y^{k+1})=Ay^{k+1}-b$, and upon some manipulation, one has
\[
\langle \nabla f(y^{k+1}), x^{k+1}-y^{k+1}
\rangle \le \tau \|y^{k+1}\|_1 - \tau \|x^{k+1}\|_1 - t^{-1} \|x^{k+1}-y^{k+1}\|^2.
\]
Combining this inequality with \eqref{lip-ineq} and using the definition of $F$, we obtain that 
\beq \label{F-descent1}
F(y^{k+1}) \ge F(x^{k+1}) + \frac12\left(\frac2t-\|A\|\right) \|x^{k+1}-y^{k+1}\|^2.
\eeq
This together with $t\in(0,2/\|A\|)$ implies $F(x^{k+1}) \le F(y^{k+1})$. In addition, one can observe from  \eqref{step-44} that $F(y^{k+1}) \le F(x^k)$. It thus follows that $F(x^{k+1}) \le F(x^k)$.

Secondly, we claim that \eqref{step-44} cannot be executed at any $M+1$  consecutive iterations, where $M$ is defined in \eqref{M}.  
Suppose for contradiction that \eqref{step-44} is executed at iterations $i, i+1, \ldots, i+M$ iterations for some $i\ge 0$. This along with the updating scheme of $\CG$ method 4 implies that \eqref{measure} with $\eta=\heta$ does not hold at these $M+1$ iterations and thus $\heps$ is updated at them. By the updating scheme on $\heps$, it is not hard to verify 
that $\heps \le \min(\sqrt{\eta_0},1)\eps/\sqrt{\eta^*}$ at the iterations $i+M_1, \ldots, i+M$, where $M_1 = M- (n+1)$. We now show that the subroutine $\TPCG1$ executed in \eqref{step-44}  must terminate at a boundary point $x^{k+1}$ 
of the feasible region of problem \eqref{sub-qp} with $c=c^k$, $J_0=J^k_0$, $J_-=J^k_-$ and $J_+=J^k_+$ for all $i+M_1 \le k \le i+M-1$. Suppose not. By the termination 
criteria of $\TPCG1$, there exists some $i+M_1 \le k \le i+M-1$ such that 
\beq \label{y-opt}
\|\cP_\cH(Ay^{k+1}-b+c^k)\|_\infty \le \frac{\sqrt{(2t^{-1}-\|A\|)\|A\|}\ \heps}{(t^{-1}+\|A\|)\sqrt{n\heta}},
\eeq
where
\[
\cH = \{x\in\Re^n: x_j = 0, \ j \in J^k_0\}.
\]
 Observe from \eqref{step-44} that $J^k_0 = I_0(x^k) \subseteq I_0(y^{k+1})$ and moreover the nonzero components of $y^{k+1}$ share the same sign as the corresponding ones of $x^k$, which implies 
\[
c_i(y^{k+1};\tau)=c_i(x^k;\tau) = c^k_i, \quad\forall i \notin I_0(y^{k+1}).
\]
Using these relations, \eqref{y-opt} and  \eqref{v}, we have 
\beq \label{sub-opt}
\left\|[v(y^{k+1})]_{I^\c_0(y^{k+1})}\right\|_\infty  =  \|\cP_\hcH(Ay^{k+1}-b+c(y^{k+1};\tau))\|_\infty \le \frac{\sqrt{(2t^{-1}-\|A\|)\|A\|}\ \heps}{(t^{-1}+\|A\|)\sqrt{n\heta}},
\eeq 
where  
\beq \label{hcH}
\hcH = \{x\in\Re^n: x_i = 0, \ i \in I_0(y^{k+1})\}.
\eeq
By the monotonicity of $\{F(x^l)\}$, we know that $F(x^k) \le F(x^0)$, which along with 
  $F(y^{k+1}) \le F(x^k)$ implies $F(y^{k+1}) \le F(x^0)$. Using this relation, \eqref{cg2-errbd} with $\eta=\eta^*$ and \eqref{sub-opt}, we obtain that 
\beqa
F(y^{k+1}) - F^*_{y^{k+1}}  &\le &  \frac{\eta^*}{2\|A\|}  \|[v(x)]_{I^\c_0(y^{k+1})}\|^2 \ \le \ \frac{n\eta^*}{2\|A\|} \|[v(x)]_{I^\c_0(y^{k+1})}\|^2_\infty \nn \\ [8pt]
&\le &\frac{\eta^*(2t^{-1}-\|A\|)\heps^2}{2\heta(t^{-1}+\|A\|)^2}, \label{Fys}
\eeqa
where $\eta^*$ is the smallest $\eta$ satisfying \eqref{cg2-errbd}. Notice that $I_0(y^{k+1}) \subseteq I_0(x^{k+1})$, which along with \eqref{Fxs} implies that 
  $F(x^{k+1}) \ge F^*_{y^{k+1}}$. In view of this, \eqref{F-descent1} and \eqref{Fys}, one has 
\[
\frac12\left(\frac2t-\|A\|\right) \|x^{k+1}-y^{k+1}\|^2  \ \le\  F(y^{k+1}) - F(x^{k+1}) \ \le \ F(y^{k+1}) - F^*_{y^{k+1}} 
\ \le\  \frac{\eta^*(2t^{-1}-\|A\|)\heps^2}{2\heta(t^{-1}+\|A\|)^2}.
\]
It then follows that 
\[
\|x^{k+1}-y^{k+1}\| \le \frac{\heps\sqrt{\eta^*}}{\sqrt{\heta}(t^{-1}+\|A\|)}.
\] 
By the first-order optimality condition of \eqref{step-45}, one has
\[
0 \in \cP_\hcH(t^{-1}(x^{k+1}-y^{k+1})+\nabla f(y^{k+1})+\tau \partial \|x^{k+1}\|_1),
\]
where $\hcH$ is defined in \eqref{hcH}. It then follows from the last  
two relations that
\[
\ba{lcl}
\dist(0, \cP_\hcH(\nabla f(x^{k+1})+\tau \partial \|x^{k+1}\|_1)) & \le & \|t^{-1}(x^{k+1}-y^{k+1})+\nabla f(y^{k+1})-\nabla f(x^{k+1})\| \\ [8pt]
& \le & (t^{-1} + \|A\|) \|x^{k+1}-y^{k+1}\| \ \le \  \heps\sqrt{\eta^*/\heta}.
\ea
\]
This along with \eqref{v}, \eqref{hcH} and $I_0(y^{k+1}) \subseteq I_0(x^{k+1})$ implies that 
\beq \label{vI0c}
\left\|[v(x^{k+1})]_{I^\c_0(x^{k+1})}\right\| \le \heps\sqrt{\eta^*/\heta}.
\eeq 
By the above supposition, we know that \eqref{step-44} is executed at iteration $k+1$ 
and hence \eqref{measure} with $\eta=\heta$ does not hold at $x^{k+1}$. In view of this fact and \eqref{vI0c}, we further have 
\beq \label{vI0}
\left\|[v(x^{k+1})]_{I_0(x^{k+1})}\right\| \ \le \ \sqrt{\heta} \left\|[v(x^{k+1})]_{I^\c_0(x^{k+1})}\right\| \ \le \ \heps\sqrt{\eta^*}.
\eeq 
As shown above, $\heps \le \eps \min(\sqrt{\eta_0},1)/\sqrt{\eta^*}$ at iteration $k+1$. In view of this, $\heta \ge \eta_0$, \eqref{vI0c} and \eqref{vI0}, we have 
$\|v(x^{k+1})\|_\infty \le \eps$, which implies that $\CG$ method 4
terminates at $x^{k+1}$ and thus \eqref{step-44} will no longer be executed 
at iteration $k+1$. This contradicts the second supposition above. Therefore, 
TPCG1 must terminate at a boundary point $x^{k+1}$ 
of the feasible region of problem \eqref{sub-qp} with $c=c^k$, $J_0=J^k_0$, $J_-=J^k_-$ and $J_+=J^k_+$ for all $i+M_1 \le k \le i+M-1$. This together with the definitions of $J^k_0$, $J^k_-$ and $J^k_+$ implies that $I_0(x^k) \subsetneq I_0(x^{k+1})$ and 
hence $|I_0(x^k)| < |I_0(x^{k+1})|$ for all $i+M_1 \le k \le i+M-1$, which leads to $|I_0(x^{i+M})| \ge M-M_1=n+1$ and contradicts the trivial fact $|I_0(x^{i+M})| \le n$. 
Thus the above claim holds. 

From the second claim above, we can see that for each $\heta$, \eqref{step-44} is executed at most $M$ times between every two adjacent executions of \eqref{step-43}. By a similar argument as in the proof of Theorem \ref{CG3-convergence}, we know that  for each $\heta$, the number of executions of \eqref{step-43} is at most $2^n-1$. Thus, for each $\heta$ the number of executions of \eqref{step-44} is at most $M(2^n-1)$. It then follows that for each $\heta$, the total number of executions of \eqref{step-41}, \eqref{step-43} and \eqref{step-44} is at most $1+(M+1)(2^n-1)$.
In addition,  by a similar argument as in the proof of Theorem \ref{CG2v-convergence},  we know 
that  the number of distinct $\heta$ arising in this method is at most  $\max(\lceil (\log \eta^* - \log \eta_0)/\log \rho\rceil,0)+1$. The conclusion of the first statement immediately follows from these facts.

(ii) The proof the second statement is similar to that of Theorem \ref{CG1-convergence}.
\end{proof}

\gap

{\bf Remark 9:} From the proof of Theorem \ref{CG4-convergence}, we know that the subroutine $\TPCG1$ is called in $\CG4$ at most $M\max(\lceil (\log \eta^* - \log \eta_0)/\log \rho \rceil+1,1)(2^n-1)$ times, where $M$ is defined \eqref{M}. In view of this and Remark 2, one can observe that when $\eps=0$, the number of PCG iterations executed within $\CG4$ is at most $M\max(\lceil (\log \eta^* - \log \eta_0)/\log \rho \rceil+1,1)(2^n-1)(n+1)$. On the other hand, when $\eps>0$,  its number of PCG iterations depends on $\eps$ in $O(\log(1/\eps))$.

\section{The $l_1$ regularized least squares problem}
\label{l1-ls-prob}

In this section we consider a special class of problem \eqref{l1-qp} in the form of
\beq \label{l1-ls}
\bF^* = \min\limits_{x\in\Re^n} \bF(x) := \frac12 \|\bA x - \bb\|^2 + \tau \|x\|_1,
\eeq
which has important applications in compressed sensing and sparse regression. The $\CG$ methods proposed in Section \ref{algorithm} can be suitably applied to solve problem \eqref{l1-ls}. As shown in Section \ref{algorithm}, these methods are able to find an exact optimal solution of \eqref{l1-ls} within a finite number of iterations when the associated accuracy parameter $\eps$ is set to $0$, assuming no numerical errors. Despite this nice property, in practice it may be more interesting to find an approximate solution for two mains reasons. One is that only a subset of  real numbers can be represented precisely in computer and thus the truncation errors generally cannot be avoided.  Another reason is that even if an
exact optimal solution can be found, it may take many iterations to do that, which can be too expensive for large-scale problems. An
approximate solution usually suffices for practical purpose.

Given a tolerance parameter $\delta>0$, we are interested in applying the $\CG$ methods to find a $\delta$-optimal solution for problem \eqref{l1-ls}, that is,  a point $x_\delta$ such that $\bF(x_\delta)-\bF^* \le \delta$. Notice that $\bF^*$ is typically unknown. To terminate these methods properly, we need a suitable lower bound on $\bF^*$.
In what follows, we derive some lower bounds for $\bF^*$. Before proceeding, we first establish a technical lemma.

\begin{lemma} \label{ls-errbdd}
Let $x^*$ be an arbitrary optimal solution of problem \eqref{l1-ls}. Then for any $x\in\Re^n$, the following inequalities hold:
\beqa
&&\left|\|\bA x-\bb\|^2 - \|\bA x^*-\bb\|^2\right| \ \le \ 2\left(\sqrt{\bF(x)}+\sqrt{\bF^*}\right)\sqrt{\bF(x)-\bF^*}, \nn \\ [8pt]
&& \left|\|x\|_1 - \|x^*\|_1\right| \ \le \ \frac{1}{\tau} \left[\bF(x)-\bF^*+\left(\sqrt{\bF(x)}+\sqrt{\bF^*}\right)\sqrt{\bF(x)-\bF^*}\right]. \label{1norm-gap}
\eeqa
\end{lemma}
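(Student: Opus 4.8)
The plan is to reduce both bounds to a single quadratic-growth estimate,
\[
\bF(x) - \bF^* \ \ge\ \tfrac12 \|\bA(x - x^*)\|^2, \qquad \forall\, x\in\Re^n,
\]
and then to finish with elementary algebra. I expect this estimate to be the only substantive step; everything afterwards is bookkeeping with the difference of squares and the (reverse) triangle inequality.

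First I would prove the displayed estimate. Writing $h(y) = \tfrac12\|\bA y - \bb\|^2$, the quadratic nature of $h$ gives the \emph{exact} expansion $h(x) - h(x^*) = \langle \nabla h(x^*), x - x^*\rangle + \tfrac12\|\bA(x-x^*)\|^2$, while convexity of the $\ell_1$ term yields $\tau\|x\|_1 - \tau\|x^*\|_1 \ge \langle \tau s^*, x - x^*\rangle$ for any subgradient $s^* \in \partial\|x^*\|_1$. Since $x^*$ is optimal, $0 \in \partial\bF(x^*) = \nabla h(x^*) + \tau\,\partial\|x^*\|_1$, so we may pick $s^*$ with $\nabla h(x^*) + \tau s^* = 0$. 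Adding the two relations, the linear terms then cancel and the estimate follows.

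For the first inequality I would set $a = \|\bA x - \bb\|$ and $b = \|\bA x^* - \bb\|$ and factor $a^2 - b^2 = (a+b)(a-b)$. The factor $a+b$ is controlled by $\tfrac12 a^2 = \bF(x) - \tau\|x\|_1 \le \bF(x)$ (and likewise $\tfrac12 b^2 \le \bF^*$), giving $a + b \le \sqrt2\,(\sqrt{\bF(x)} + \sqrt{\bF^*})$. The factor $|a-b|$ is controlled by the reverse triangle inequality together with the quadratic-growth estimate, namely $|a-b| \le \|\bA(x-x^*)\| \le \sqrt2\,\sqrt{\bF(x)-\bF^*}$. Multiplying these two bounds yields exactly the claimed inequality for $\big|\,\|\bA x-\bb\|^2 - \|\bA x^*-\bb\|^2\,\big|$.

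For the second inequality I would exploit the exact identity obtained by merely writing out $\bF$: $2\tau(\|x\|_1 - \|x^*\|_1) = 2(\bF(x)-\bF^*) - (a^2 - b^2)$. Taking absolute values, using $\bF(x) \ge \bF^*$, and substituting the bound on $|a^2 - b^2|$ just established gives $\tau\,\big|\,\|x\|_1 - \|x^*\|_1\,\big| \le (\bF(x)-\bF^*) + (\sqrt{\bF(x)}+\sqrt{\bF^*})\sqrt{\bF(x)-\bF^*}$; dividing by $\tau>0$ then finishes the proof. The hard part is genuinely the quadratic-growth estimate, since it is the sole place where optimality of $x^*$ enters, and it hinges on the exactness of the second-order expansion of the quadratic $h$ together with the cancellation of the gradient term; once it is in hand, both inequalities are short.
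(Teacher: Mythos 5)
Your proof is correct, and while your final difference-of-squares bookkeeping coincides with the paper's, you reach the key quadratic-growth estimate $\bF(x)-\bF^* \ge \frac12\|\bA(x-x^*)\|^2$ by a genuinely different route. The paper obtains it by rewriting $\bF^* = \min_{u\in\range(\bA)}\phi(u)$ with $\phi(u) = \frac12\|u-\bb\|^2 + \tau\min_{\bA x = u}\|x\|_1$, observing that $\phi$ is $1$-strongly convex on $\range(\bA)$, verifying that its unique minimizer is $u^*=\bA x^*$, and then invoking the first-order optimality condition to get $\phi(u)-\bF^*\ge \frac12\|u-u^*\|^2$, whence $\bF(x)-\bF^*\ge\phi(\bA x)-\bF^*\ge \frac12\|\bA(x-x^*)\|^2$. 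You instead expand the quadratic part exactly, $h(x)-h(x^*) = \langle\nabla h(x^*),x-x^*\rangle + \frac12\|\bA(x-x^*)\|^2$, add the subgradient inequality for $\tau\|\cdot\|_1$ with the particular $s^*\in\partial\|x^*\|_1$ satisfying $\nabla h(x^*)+\tau s^*=0$ (which exists by $0\in\partial\bF(x^*)$ and the sum rule, $h$ being differentiable), and let the linear terms cancel. Your derivation is shorter and more elementary, makes transparent that optimality of $x^*$ enters only through this gradient cancellation, and works verbatim with $\|\cdot\|_1$ replaced by any convex regularizer; the paper's infimal-projection argument is heavier but yields as a byproduct the uniqueness of $u^*$, i.e.\ that $\bA x^*$ is the same for every optimal solution --- a fact not needed for this lemma but conceptually informative. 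The remaining steps in your proposal check out exactly as in the paper: $|a-b|\le\|\bA(x-x^*)\|\le\sqrt{2(\bF(x)-\bF^*)}$, $a+b\le\sqrt2\left(\sqrt{\bF(x)}+\sqrt{\bF^*}\right)$, and the identity $\tau(\|x\|_1-\|x^*\|_1)=(\bF(x)-\bF^*)-\frac12(a^2-b^2)$, with $\bF(x)\ge\bF^*$ handling the absolute value; like the paper, you implicitly use $\tau>0$, which the presence of $1/\tau$ in the statement already presupposes.
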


\begin{proof}
One can observe that
\beq
\bF^* = \min\limits_{x\in\Re^n} \frac12 \|\bA x - \bb\|^2 + \tau \|x\|_1 ]
= \min\limits_{u \in \range(\bA)} \underbrace{\frac12 \|u -\bb\|^2 +
\tau \min\limits_{\bA x = u} \|x\|_1}_{\phi(u)}. \label{u-opt}
\eeq
Notice that $\phi$ is strongly convex in $u$. Hence, the latter problem in \eqref{u-opt} has a unique optimal solution, denoted by $u^*$. By the definition of $\phi$, one has
\[
 \bF^* \ \le \ \phi(\bA x^*) = \frac12 \|\bA x^* -\bb\|^2 + \tau \min\limits_{\bA x = \bA x^*} \|x\|_1
 \ \le \  \frac12 \|\bA x^* -\bb\|^2 + \tau \|x^*\|_1 = \bF^*,
\]
which implies $\phi(\bA x^*)=\bF^*$ and hence $u^*=\bA x^*$. In addition, by the strong convexity of $\phi$ and
the first-order optimality condition of \eqref{u-opt}, we obtain that
\[
\phi(u) - \bF^*  \ge  \frac12 \|u-u^*\|^2, \quad\quad \forall u \in \range(\bA).
\]
In view of this relation, the definition of $\phi$, and $u^*=\bA x^*$, one has
\[
\bF(x) - \bF^* \ \ge \ \phi(\bA x) - \bF^* \ \ge \ \frac12 \|\bA(x-x^*)\|^2, \quad\quad \forall x \in \Re^n,
\]
which yields
\beq \label{diff-Ax}
\|\bA(x-x^*)\| \le \sqrt{2(\bF(x) - \bF^* )}.
\eeq
Notice from the definition of $\bF$ that
\[
 \|\bA x^*-\bb\| \ \le \ \sqrt{2\bF^*}, \quad \|\bA x-\bb\| \ \le \ \sqrt{2\bF(x)}, \ \ \ \forall x\in\Re^n.
\]
Using these relations, \eqref{diff-Ax} and the definition of $\bF$,  we have
\[
\ba{rcl}
\left|\|\bA x-\bb\|^2 - \|\bA x^*-\bb\|^2\right| &\le& (\|\bA x -\bb\| + \|\bA x^*-\bb\|) \|\bA(x-x^*)\| \\ [8pt]
& \le & 2\left(\sqrt{\bF(x)}+\sqrt{\bF^*}\right)\sqrt{\bF(x)-\bF^*}, \\ [8pt]
\left|\|x\|_1 - \|x^*\|_1\right| &=& \frac{1}{\tau} \left|(\bF(x) - \frac12\|\bA x-\bb\|^2) - (\bF^* - \frac12\|\bA x^*-\bb\|^2) \right|, \\ [10pt]
&\le & \frac{1}{\tau}  \left[\bF(x) - \bF^* + \frac12\left|\|\bA x-\bb\|^2 - \|\bA x^*-\bb\|^2 \right|\right], \\ [10pt]
&\le & \frac{1}{\tau}  \left[\bF(x) - \bF^* + \left(\sqrt{\bF(x)}+\sqrt{\bF^*}\right)\sqrt{\bF(x)-\bF^*} \right].
\ea
\]
\end{proof}

\gap

In the following propositions we derive two computable lower bounds for $\bF^*$. 

\begin{proposition} \label{lower-bdd1}
Let $\{x^k\}$ be a sequence of approximate solutions to problem \eqref{l1-ls}, and let
\beq \label{F-low1}
\bF_{\low_1}(x^k) := \bF(x^k) - \langle \bA^T(\bA x^k-\bb), x^k \rangle - \tau \|x^k\|_1 + \min\left(1-\frac{\|\bA^T(\bA x^k-\bb)\|_\infty}{\tau},0\right)\bF(x^k),
\eeq
where $\langle \cdot, \cdot \rangle$ denotes the inner product of two associated vectors. Then the following statements hold:
\bi
\item[(i)] $\bF^* \ge \bF_{\low_1}(x^k)$ for all $k$;
\item[(ii)] If $\bF(x^k) \to \bF^*$, then $\bF_{\low_1}(x^k) \to \bF^*$.
\ei
\end{proposition}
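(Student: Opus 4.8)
The plan is to prove (i) by combining the gradient inequality for the smooth part of $\bF$ with a H\"older bound on the resulting linear term, and to prove (ii) by invoking Lemma \ref{ls-errbdd} to pass each term of $\bF_{\low_1}$ to its limit at an optimal solution.

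For (i), set $g^k = \bA^T(\bA x^k - \bb)$ (the gradient of $\frac12\|\bA\cdot-\bb\|^2$ at $x^k$) and let $x^*$ be any optimal solution. Convexity of the quadratic gives $\frac12\|\bA x^*-\bb\|^2 \ge \frac12\|\bA x^k-\bb\|^2 + \langle g^k, x^*-x^k\rangle$; adding $\tau\|x^*\|_1$ and using $\frac12\|\bA x^k-\bb\|^2 = \bF(x^k)-\tau\|x^k\|_1$ yields
\[
\bF^* \ge \bF(x^k) - \tau\|x^k\|_1 - \langle g^k, x^k\rangle + \bigl(\langle g^k, x^*\rangle + \tau\|x^*\|_1\bigr).
\]
The remaining task is to bound the parenthesized term below by $\min(1-\|g^k\|_\infty/\tau,0)\bF(x^k)$. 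H\"older's inequality gives $\langle g^k,x^*\rangle+\tau\|x^*\|_1 \ge (\tau-\|g^k\|_\infty)\|x^*\|_1$, and I would split into two cases. If $\|g^k\|_\infty\le\tau$ this quantity is nonnegative and may be dropped, matching the $\min(\cdot,0)=0$ branch. If $\|g^k\|_\infty>\tau$ the coefficient is negative, and the key observation is $\tau\|x^*\|_1 \le \bF(x^*) = \bF^* \le \bF(x^k)$, whence $(\tau-\|g^k\|_\infty)\|x^*\|_1 \ge (1-\|g^k\|_\infty/\tau)\bF(x^k)$. This substitution of the computable upper bound $\bF(x^k)/\tau$ for the unknown $\|x^*\|_1$ is the one genuinely delicate point, and is what makes $\bF_{\low_1}$ a valid \emph{computable} lower bound.

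For (ii), since $\bF^*\ge\bF_{\low_1}(x^k)$ already holds by (i), it suffices to show $\bF_{\low_1}(x^k)\to\bF^*$. Because $\bF(x^k)\to\bF^*$, Lemma \ref{ls-errbdd} gives $\|\bA(x^k-x^*)\|\le\sqrt{2(\bF(x^k)-\bF^*)}\to 0$ together with $\|x^k\|_1\to\|x^*\|_1$. Hence $\bA x^k\to\bA x^*$, which in turn forces $g^k\to g^*:=\bA^T(\bA x^*-\bb)$, so $\|g^k\|_\infty\to\|g^*\|_\infty$ and $\langle g^k,x^k\rangle=\langle \bA x^k-\bb,\bA x^k\rangle\to\langle \bA x^*-\bb,\bA x^*\rangle=\langle g^*,x^*\rangle$. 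I would stress that $x^k$ itself need not converge---only $\bA x^k$ does---which matters since $\bA$ may be rank deficient; this is precisely why I route $\langle g^k,x^k\rangle$ through $\bA x^k$. Passing each term to its limit yields $\bF_{\low_1}(x^k)\to\bF_{\low_1}(x^*)$.

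It then remains to verify $\bF_{\low_1}(x^*)=\bF^*$, which closes the argument. From the optimality condition $-g^*\in\tau\,\partial\|x^*\|_1$ I read off $\|g^*\|_\infty\le\tau$, so the $\min(\cdot,0)$ term vanishes, and $\langle g^*,x^*\rangle=-\tau\|x^*\|_1$ (since $g^*_i=-\tau\,\sgn(x^*_i)$ wherever $x^*_i\neq 0$). Substituting into the definition of $\bF_{\low_1}$ and using $\bF(x^*)=\bF^*$ collapses the expression to $\bF^*$. Overall I expect (i) to carry the conceptual weight through the $\|x^*\|_1\le\bF(x^k)/\tau$ substitution, while (ii) is mainly a continuity/limit bookkeeping argument anchored on Lemma \ref{ls-errbdd}.
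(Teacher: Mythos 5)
Your proposal is correct and takes essentially the same route as the paper: part (i) rests on the gradient inequality for the quadratic together with the bound $\|x^*\|_1 \le \tau^{-1}\bF(x^k)$ (you apply it pointwise at $x^*$ with a case split on whether $\|\bA^T(\bA x^k-\bb)\|_\infty \le \tau$, while the paper packages the identical case analysis as the closed-form minimum of $\langle \bA^T(\bA x^k-\bb), x\rangle + \tau\|x\|_1$ over the ball $\{x : \|x\|_1 \le \tau^{-1}\bF(x^k)\}$ containing $x^*$). Part (ii) matches the paper's limit argument via Lemma \ref{ls-errbdd} and routing $\langle \bA^T(\bA x^k-\bb), x^k\rangle$ through $\bA x^k$, with your elementwise derivation of $\langle \bA^T(\bA x^*-\bb), x^*\rangle = -\tau\|x^*\|_1$ from the subdifferential being an equivalent substitute for the paper's positive-homogeneity argument.
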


\begin{proof} Let $x^*$ be an arbitrary optimal solution of \eqref{l1-ls}.

(i) Using the definition of $\bF$, we have
\[
\tau \|x^*\|_1 \le \bF(x^*) \le \bF(x^k),
\]
which implies
\beq \label{bdd-soln}
\|x^*\|_1 \le \tau^{-1} \bF(x^k), \quad\quad \forall k.
\eeq
It then follows that  $x^* \in \Delta_k :=\{x\in\Re^n: \|x\|_1 \le \tau^{-1} \bF(x^k)\}$.
By the convexity of $\|\bA \cdot - \bb\|^2/2$, one
has
\[
\bF(x) = \frac12 \|\bA x-\bb\|^2+\tau \|x\|_1 \ \ge \ \frac12 \|\bA x^k-\bb\|^2 + \langle \bA^T(\bA x^k-\bb), x- x^k \rangle + \tau \|x\|_1,
\]
Using this and the fact $x^*\in\Delta_k$, we have 
\[
\ba{lcl}
\bF^* &=&  \min\limits_{x\in \Delta_k} \bF(x) \ \ge \ \min\limits_{x\in \Delta_k} \left\{\frac12 \|\bA x^k-\bb\|^2 + \langle \bA^T(\bA x^k-\bb), x- x^k \rangle + \tau \|x\|_1\right\}, \\ [15pt]
&=& \frac12 \|\bA x^k-\bb\|^2 - \langle \bA^T(\bA x^k-\bb), x^k \rangle + \min\limits_{x\in \Delta_k} \left\{ \langle \bA^T(\bA x^k-\bb), x \rangle + \tau \|x\|_1\right\}, \\ [15pt]
&=& \bF(x^k) - \langle \bA^T(\bA x^k-\bb), x^k \rangle - \tau \|x^k\|_1 +\min\limits_{x\in \Delta_k} \left\{ \langle \bA^T(\bA x^k-\bb), x \rangle + \tau \|x\|_1\right\}, \\ [15pt]
&=& \bF(x^k) - \langle \bA^T(\bA x^k-\bb), x^k \rangle - \tau \|x^k\|_1 + \min\left(1-\frac{\|\bA^T(\bA x^k-\bb)\|_\infty}{\tau},0\right)\bF(x^k),
\ea
\]
which together with \eqref{F-low1} implies that statement (i) holds.

(ii) Suppose $\bF(x^k) \to \bF^*$. Recall that $x^*$ is an optimal solution of \eqref{l1-ls}. By the first
optimality condition of \eqref{l1-ls} at $x^*$, one has 
\beq \label{1st-opt1}
0 \in \bA^T(\bA x^*-\bb)+ \tau\partial\|x^*\|_1
\eeq
and hence $\|\bA^T(\bA x^*-\bb)\|_\infty \le \tau$.
In view of \eqref{diff-Ax} and the assumption $\bF(x^k) \to \bF^*$, we have $\|\bA(x^k-x^*)\| \to 0$ and hence
$\bA x^k \to \bA x^*$, which implies
\beqa
&&\langle \bA^T(\bA x^k-\bb), x^k \rangle = \langle \bA x^k-\bb, \bA x^k \rangle \ \to \
\langle \bA x^*-\bb, \bA x^* \rangle = \langle \bA^T(\bA x^*-\bb),  x^* \rangle, \label{lim1} \\ [5pt]
&& \min\left(1-\frac{\|\bA^T(\bA x^k-\bb)\|_\infty}{\tau},0\right) \ \to \
\min\left(1-\frac{\|\bA^T(\bA x^*-\bb)\|_\infty}{\tau},0\right) \ = \ 0, \label{lim2}
\eeqa
where the last equality is due to $\|\bA^T(\bA x^*-\bb)\|_\infty \le \tau$. In addition, by the the assumption $\bF(x^k) \to \bF^*$ and \eqref{1norm-gap}, we have $\|x^k\|_1 \to \|x^*\|_1$. Also, by \eqref{1st-opt1}, one can observe that $x^*$ is an optimal solution to the problem
\[
\min\limits_x \langle \bA^T(\bA x^*-\bb), x\rangle + \tau \|x\|_1.
\]
Notice that the objective of this problem is positive homogeneous. Hence,  its optimal value is $0$. It then follows that 
\[
\langle \bA^T(\bA x^*-\bb), x^*\rangle + \tau \|x^*\|_1 = 0.
\]
Using this relation, $\bF(x^k) \to \bF^*$, $\|x^k\|_1 \to \|x^*\|_1$, \eqref{lim1}, \eqref{lim2} and taking limits on both sides
of \eqref{F-low1}, we have $\bF_{\low_1}(x^k) \to \bF^*$.
\end{proof}

\gap

As seen from Proposition \ref{lower-bdd1}, $\bF_{\low_1}(x^k)$ is a suitable lower bound for $\bF^*$. We next derive another lower bound.

\begin{proposition} \label{lower-bdd2}
(i) Let $\{x^k\}$ be a sequence of approximate solutions to problem \eqref{l1-ls}, $v$ be defined in \eqref{v} with $f(x)=\|\bA x-\bb\|^2/2$, and let 
\beq \label{F-low2}
\bF_{\low_2}(x^k) := \bF(x^k) (1-\tau^{-1}\|v(x^k)\|_\infty)-\langle v(x^k), x^k \rangle.
\eeq
The following statements hold:
\bi
\item[(i)] $\bF^* \ge \bF_{\low_2}(x^k)$ for all $k$.
\item[(ii)] If $\{x^k\}$ is bounded and $v(x^k) \to 0$, then $\bF_{\low_2}(x^k) \to \bF^*$.
\ei
\end{proposition}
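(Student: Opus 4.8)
The plan is to prove statement (i) by a first-order (subgradient) linearization argument that closely mirrors the proof of Proposition~\ref{lower-bdd1}, and then to deduce statement (ii) from (i) by a squeeze argument. Throughout, let $x^*$ be an arbitrary optimal solution of \eqref{l1-ls}, and recall from \eqref{v} that $v(x^k)\in\partial\bF(x^k)$.

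For statement (i), I would first reuse the bound \eqref{bdd-soln}, namely $\tau\|x^*\|_1\le\bF(x^*)=\bF^*\le\bF(x^k)$, which gives $\|x^*\|_1\le\tau^{-1}\bF(x^k)$. Since $v(x^k)\in\partial\bF(x^k)$ and $\bF$ is convex, the subgradient inequality evaluated at $x^*$ yields
\[
\bF^*=\bF(x^*)\ \ge\ \bF(x^k)+\langle v(x^k),\,x^*-x^k\rangle.
\]
I would then bound the cross term by H\"older's inequality together with the norm bound above, obtaining $\langle v(x^k),x^*\rangle\ge-\|v(x^k)\|_\infty\|x^*\|_1\ge-\tau^{-1}\bF(x^k)\|v(x^k)\|_\infty$. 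Substituting this into the previous display gives
\[
\bF^*\ \ge\ \bF(x^k)-\langle v(x^k),x^k\rangle-\tau^{-1}\bF(x^k)\|v(x^k)\|_\infty,
\]
whose right-hand side is exactly $\bF_{\low_2}(x^k)$ by \eqref{F-low2}. Equivalently, one could minimize the linearization $\bF(x^k)+\langle v(x^k),x-x^k\rangle$ over the $\ell_1$-ball $\Delta_k=\{x\in\Re^n:\|x\|_1\le\tau^{-1}\bF(x^k)\}$ as in Proposition~\ref{lower-bdd1}, since $x^*\in\Delta_k$ and the minimum of the linear part over $\Delta_k$ equals $-\tau^{-1}\bF(x^k)\|v(x^k)\|_\infty$.

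For statement (ii), the central observation is the sandwich $\bF_{\low_2}(x^k)\le\bF^*\le\bF(x^k)$, where the left inequality is statement (i) and the right one holds because $x^k$ is feasible for \eqref{l1-ls}. From \eqref{F-low2} a direct computation gives the gap
\[
\bF(x^k)-\bF_{\low_2}(x^k)=\tau^{-1}\bF(x^k)\|v(x^k)\|_\infty+\langle v(x^k),x^k\rangle,
\]
and I would show it tends to $0$: since $\{x^k\}$ is bounded and $\bF$ is continuous, $\{\bF(x^k)\}$ is bounded, so $\tau^{-1}\bF(x^k)\|v(x^k)\|_\infty\to0$ as $v(x^k)\to0$; moreover $|\langle v(x^k),x^k\rangle|\le\|v(x^k)\|\,\|x^k\|\to0$ by Cauchy--Schwarz and boundedness of $\{x^k\}$. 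Combining with the sandwich gives $0\le\bF^*-\bF_{\low_2}(x^k)\le\bF(x^k)-\bF_{\low_2}(x^k)\to0$, whence $\bF_{\low_2}(x^k)\to\bF^*$.

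I do not anticipate a genuine obstacle here; the only point requiring care is the cross-term estimate in (i), i.e.\ recognizing that the worst-case pairing of $v(x^k)$ with the feasible point $x^*$ is controlled exactly by $\|v(x^k)\|_\infty\|x^*\|_1$ (H\"older), which is precisely what makes the bound collapse to the stated closed form. It is worth noting that the squeeze in (ii) makes the error bound results of Section~\ref{err-bdds} unnecessary for this proposition: boundedness of $\{\bF(x^k)\}$ together with $v(x^k)\to0$ already forces the gap to vanish, without invoking any quantitative growth estimate for $\bF-\bF^*$.
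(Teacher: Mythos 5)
Your proof of part (i) is essentially the paper's own argument: the subgradient inequality $\bF^*\ge\bF(x^k)+\langle v(x^k),x^*-x^k\rangle$ at an optimal $x^*$, the H\"older bound $\langle v(x^k),x^*\rangle\ge-\|v(x^k)\|_\infty\|x^*\|_1$, and the estimate $\|x^*\|_1\le\tau^{-1}\bF(x^k)$ recycled from \eqref{bdd-soln}; your version is in fact slightly more careful, since the paper's display pairs $\|v(x^k)\|_\infty$ with $\|x^*\|$ where the dual norm argument requires $\|x^*\|_1$. For part (ii), however, you take a genuinely different and more elementary route. The paper first invokes the error bound of Theorem \ref{errbdd-F} (applicable because boundedness of $\{x^k\}$ keeps $\{\bF(x^k)\}$ bounded, placing the sequence in a sublevel set $\cS(\delta)$) to conclude $\bF(x^k)-\bF^*\le\eta\|v(x^k)\|\to0$, and only then passes to the limit in \eqref{F-low2}, using $v(x^k)\to0$ and the boundedness of $\{x^k\}$ to dispose of the terms $\tau^{-1}\bF(x^k)\|v(x^k)\|_\infty$ and $\langle v(x^k),x^k\rangle$. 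You instead sandwich $\bF_{\low_2}(x^k)\le\bF^*\le\bF(x^k)$ and show directly that the computable gap $\bF(x^k)-\bF_{\low_2}(x^k)=\tau^{-1}\bF(x^k)\|v(x^k)\|_\infty+\langle v(x^k),x^k\rangle$ vanishes, which needs only continuity of $\bF$ and Cauchy--Schwarz. This bypasses the machinery of Section \ref{err-bdds} entirely, and your closing observation is accurate and worth keeping: the squeeze recovers $\bF(x^k)\to\bF^*$ as a byproduct, since $0\le\bF(x^k)-\bF^*\le\bF(x^k)-\bF_{\low_2}(x^k)\to0$. What the paper's route buys is quantitative information --- a rate for $\bF(x^k)-\bF^*$ in terms of $\|v(x^k)\|$ tied to the error-bound constant $\eta$, consistent with how the error bounds are used elsewhere in the paper --- whereas your argument buys self-containedness: it shows that for this proposition the convergence of the lower bound to $\bF^*$ is already encoded in the definition \eqref{F-low2} together with part (i), with no growth estimate on $\bF-\bF^*$ needed.
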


\begin{proof}
(i) Let $x^*$ be an arbitrary optimal solution of \eqref{l1-ls}. In view of \eqref{v}, one knows that $v(x^k) \in \partial \bF(x^k)$. Using this relation and \eqref{bdd-soln}, we obtain that
\[
\ba{lcl}
\bF^* & = & \bF(x^*) \ge \bF(x^k) + \langle v(x^k), x^*-x^k \rangle  \ = \ \bF(x^k) - \langle v(x^k), x^k \rangle + \langle v(x^k), x^*\rangle \\ [8pt]
&\ge & \bF(x^k) -\langle v(x^k), x^k \rangle- \|v(x^k)\|_\infty  \|x^*\|\ \ge \ \bF(x^k) (1-\tau^{-1}\|v(x^k)\|_\infty)-\langle v(x^k), x^k \rangle,
\ea
\]
and hence statement (i) holds.

(ii) Assume that $\{x^k\}$ is bounded and $v(x^k) \to 0$. Then $\{\bF(x^k)\}$ is bounded. It follows from this and Lemma \ref{errbdd-F} with $F=\bF$ and $F^*=\bF^*$ that there exists some $\eta>0$ such that $\bF(x^k)-\bF^* \le \eta \|v(x^k)\|$ for all $k$. This along with $v(x^k) \to 0$ and the definition of $\bF^*$ implies $\bF(x^k) \to \bF^*$. The conclusion of this statement immediately follows from this relation, \eqref{F-low2}, $v(x^k) \to 0$ and the boundedness of $\{x^k\}$.
\end{proof}

\gap

We next propose a practical termination criterion for the $\CG$ methods 
by using the above two lower bounds on $\bF^*$. This criterion may also be 
useful for any other methods for solving problem \eqref{l1-ls}.  

\begin{theorem} \label{term-criterion}
Let $x^0$ be an arbitrary initial point for the $\CG$ methods. Given any $\delta \ge 0$, let
\beq \label{eps}
\eps = \tau \delta/(2\bF(x^0)).
\eeq
Let $\{x^k\}$ be the sequence generated by the $\CG$ methods applied to problem 
\eqref{l1-ls} starting at $x^0$ with the above $\eps$ as the input accuracy parameter. Suppose that these methods are terminated once 
 $\|v(x^k)\|_\infty \le \eps$ or
\beq \label{F-gap1}
\bF(x^k) - \max(\bF_{\low_1}(x^k),\bF_{\low_2}(x^k)) \le \delta
\eeq
holds. Then the $\CG$ methods terminate within a finite number of iterations at a $\delta$-optimal solution $x^k$ of \eqref{l1-ls}, that is, $\bF(x^k)-\bF^* \le \delta$.
\end{theorem}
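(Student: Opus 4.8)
The plan is to establish the two assertions separately: first that the methods stop after finitely many iterations, and then that whichever of the two stopping tests triggers, the returned point satisfies $\bF(x^k)-\bF^*\le\delta$. For the finite-termination part I would argue that adjoining the disjunctive test \eqref{F-gap1} to the stopping rule cannot delay termination. The iterates generated are exactly those of the underlying $\CG$ method until one of the two conditions is met, and we halt at the first $k$ for which either holds; since each $\CG$ variant already terminates after finitely many iterations once it reaches $\|v(x^k)\|_\infty\le\eps$ (Theorems \ref{CG1-convergence}, \ref{CG2-convergence}, \ref{CG2v-convergence}, \ref{CG3-convergence}, \ref{CG4-convergence}), the augmented method halts no later, hence finitely.

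For $\delta$-optimality I would split on the triggering condition. The case where \eqref{F-gap1} fires is immediate: by Propositions \ref{lower-bdd1}(i) and \ref{lower-bdd2}(i) both $\bF_{\low_1}(x^k)\le\bF^*$ and $\bF_{\low_2}(x^k)\le\bF^*$, so $\max(\bF_{\low_1}(x^k),\bF_{\low_2}(x^k))\le\bF^*$, whence $\bF(x^k)-\bF^*\le \bF(x^k)-\max(\bF_{\low_1}(x^k),\bF_{\low_2}(x^k))\le\delta$.

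The substance is the case where $\|v(x^k)\|_\infty\le\eps$ triggers termination. Here I would again use the valid lower bound $\bF_{\low_2}(x^k)\le\bF^*$ and bound the gap $\bF(x^k)-\bF_{\low_2}(x^k)$ explicitly. From the formula \eqref{F-low2}, $\bF(x^k)-\bF_{\low_2}(x^k)=\tau^{-1}\|v(x^k)\|_\infty\bF(x^k)+\langle v(x^k),x^k\rangle$. I would control the inner product by H\"older, $\langle v(x^k),x^k\rangle\le\|v(x^k)\|_\infty\|x^k\|_1$, and bound the factor $\|x^k\|_1$ through $\tau\|x^k\|_1\le\bF(x^k)$, giving $\bF(x^k)-\bF_{\low_2}(x^k)\le 2\tau^{-1}\|v(x^k)\|_\infty\bF(x^k)$. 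Invoking the objective monotonicity $\bF(x^k)\le\bF(x^0)$ (which each convergence proof supplies) and substituting the calibrated $\eps=\tau\delta/(2\bF(x^0))$ from \eqref{eps}, I obtain $\bF(x^k)-\bF^*\le\bF(x^k)-\bF_{\low_2}(x^k)\le 2\tau^{-1}\eps\bF(x^0)=\delta$.

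The delicate point is precisely this last case: the term $\langle v(x^k),x^k\rangle$ in $\bF_{\low_2}$ carries no definite sign, so bounding it demands both H\"older and the uniform monotonicity $\bF(x^k)\le\bF(x^0)$, which I would confirm holds across all four $\CG$ variants. Once monotonicity is secured, the choice $\eps=\tau\delta/(2\bF(x^0))$ is exactly what makes the constant $2\tau^{-1}\eps\bF(x^0)$ collapse to $\delta$, so no further slack is available and the calibration must be tracked carefully rather than absorbed into an unspecified constant.
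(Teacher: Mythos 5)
Your proposal is correct and follows essentially the same route as the paper's proof: the identical case split on which test fires, the same appeal to Propositions \ref{lower-bdd1}(i) and \ref{lower-bdd2}(i) for the \eqref{F-gap1} case, and in the $\|v(x^k)\|_\infty\le\eps$ case the same chain combining H\"older's inequality, $\tau\|x^k\|_1\le \bF(x^k)$, the monotonicity $\bF(x^k)\le\bF(x^0)$, and the calibration \eqref{eps} to get $2\tau^{-1}\eps\,\bF(x^0)=\delta$. The only cosmetic difference is that you bound the gap $\bF(x^k)-\bF_{\low_2}(x^k)$ directly, whereas the paper states the equivalent lower bound $\bF_{\low_2}(x^k)\ge \bF(x^k)(1-2\tau^{-1}\|v(x^k)\|_\infty)$.
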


\begin{proof}
In view of Theorems \ref{CG1-convergence}-\ref{CG4-convergence}, one can see that at least one of the above termination criteria must be satisfied at some iteration $k$. Also, one can observe that $\bF(x^k) \le \bF(x^0)$. We next show that $\bF(x^k)-\bF^* \le \delta$ holds by considering two cases.

Case 1): $\|v(x^k)\|_\infty \le \eps$ holds. By the definition of $\bF$, one has 
$\tau \|x^k\|_1 \le \bF(x^k)$, which yields $\|x^k\|_1 \le \tau^{-1} \bF(x^k)$. 
 Using this relation and \eqref{F-low2}, we obtain that 
 \[
\bF_{\low_2}(x^k) \ge   \bF(x^k) (1-\tau^{-1}\|v(x^k)\|_\infty)-\|v(x^k)\|_\infty \|x^k\|_1 \ge \bF(x^k) (1-2\tau^{-1}\|v(x^k)\|_\infty).
 \] 
 It follows from this, $\bF^* \ge \bF_{\low_2}(x^k)$, $\|v(x^k)\|_\infty \le \eps$,  $\bF(x^k) \le \bF(x^0)$ and \eqref{eps} that 
\[
\bF(x^k) - \bF^* \ \le \  2\tau^{-1} \bF(x^k) \|v(x^k)\|_\infty \ \le \  2\tau^{-1} \bF(x^0) \|v(x^k)\|_\infty \ \le \  2\tau^{-1} \bF(x^0) \eps \ = \  \delta.
\]

Case 2): \eqref{F-gap1} holds. It follows from Propositions \ref{lower-bdd1} and \ref{lower-bdd2}
that
\[
\max(\bF_{\low_1}(x^k),\bF_{\low_2}(x^k)) \ge \bF^*,
\]
which together with \eqref{F-gap1} implies $\bF(x^k) - \bF^* \le \delta$.
\end{proof}

\section{Numerical results} 
\label{result}

In this section we conduct numerical experiments to test the performance of the $\CG$ methods proposed in Section \ref{algorithm} and compare them with some closely related methods, which are  the fast iterative shrinkage-thresholding algorithm (FISTA) \cite{Beck} with a constant stepsize $1/L$, the interleaved ISTA-CG method (iiCG) \cite{Byrd}, and the nonmonotone proximal gradient method (NPG) \cite{WrNoFi09} without continuation. It shall be mentioned that four iiCG algorithms were proposed in \cite{Byrd} that differ in the choice of stepsize (fixed or variable) and proximal step (ISTA or reduced ISTA). As observed in our experiment, the iiCG algorithm with the ISTA step and the variable stepsize determined by 
the subroutine ISTA-BB-LS described in \cite{Byrd} outperforms the others. Therefore, we only include this one in our numerical comparison. In addition, extensive numerical experiments conducted in \cite{Byrd} demonstrate that the iiCG methods are competitive with the other state-of-the-art codes including PSSgb \cite{Sch10}, N83 \cite{BeCaGr11}, pdNCG \cite{FoGo14}, SALSA \cite{AfBiFi10}, TWIST \cite{BiFi07}, FPC\_AS \cite{HaYiZh07}, l1\_ls \cite{KiKoLuBo07} and YALL1 \cite{DeYiZh11}. Therefore, 
we will not compare our methods with them in this paper.

 We choose $x^0=0$ as the initial point for all methods and terminate them once the termination criterion \eqref{F-gap1} with some $\delta$ is met. It then follows from Theorem \ref{term-criterion} that the approximate solution obtained by them is a $\delta$-optimal solution of \eqref{l1-ls}. For the $\CG$ methods, we set $t=2/(\|\bA\|^2+10^{-4})$, $\eta_0=\kappa^2(\bA)$, $\rho=10$, $\xi=0.5$ and $\eps=\tau \delta/(2\bF(x^0))$, where $\tau$ and $\bF$ are given in \eqref{l1-ls}. For FISTA, iiCG and NPG, we choose the same parameters as mentioned in \cite{Beck,Byrd,WrNoFi09} except $M$. In particular, we set $L=\|\bA\|^2$ (i.e., the Lipschitz constant of the gradient of $\|\bA x -\bb\|^2/2$) for FISTA, $c=10^{-4}$ and $\xi=0.005$ for iiCG, and $\sigma=10^{-2}$, $\alpha_{\max}=1/\alpha_{\min}=10^{30}$ and $\eta=2$ for NPG. For the purpose of comparison of these methods, we do not include the time for evaluating these parameters in the CPU time reported below. The codes of all methods are written in Matlab. All computations are performed by Matlab R2015b running on a Dell Optiplex 9020 personal computer with a 3.40-GHz Intel Core i7-4770 processor and 32 GB of RAM. 
  
In the first experiment we test the performance of the $\CG$ methods, 
particularly, $\CG2_v$, $\CG3$ and $\CG4$ and also compare them with FISTA, iiCG  and NPG for solving problem \eqref{l1-ls} in which $\bA$ is well-conditioned, namely, with a small $\kappa(\bA)$.  We randomly generate $\bA \in \Re^{m \times n}$ and  $\bb\in \Re^m$ in the same manner as described in $l_1$-magic \cite{CanRom05}. In particular, given $\sigma >0$ and positive integers $m$, $n$, $s$ with $m < n$ and $s< n$, we first generate a matrix $W\in\Re^{n\times m}$ with entries randomly chosen 
from a standard normal distribution. We then compute an orthonormal basis, denoted by $B$, for the range space 
of $W$, and set $\bA=B^T$. In addition, we randomly generate a vector $\tx \in \Re^n$ with 
only $s$ nonzero components that are $\pm 1$, and generate a vector $v\in\Re^{m}$ 
with entries randomly chosen from a standard normal distribution. Finally, we set $\bb = \bA\tx+\sigma v$. In 
particular, we choose $\sigma=10^{-5}$ for all instances. In addition, we 
set $\tau=0.1$ for problem \eqref{l1-ls}, and $M=5$ for iiCG and NPG that is the same as in \cite{Byrd,WrNoFi09}.

In Table \ref{cg-res1} we present the computational results obtained by those methods based on the termination criterion \eqref{F-gap1} with $\delta=10^{-2}$. The parameters $m$, $n$ and $s$ of each instance are listed in the first three columns, respectively. We observe from the experiment that the 
approximate solutions found by these methods have almost same cardinality and nearly equal objective value. We only report in the rest of columns of Table \ref{cg-res1} the average cardinality of their approximate solutions and also the CPU times (in seconds) of each method.     
 From Table \ref{cg-res1}, one can see that $\CG4$ consistently outperforms  $\CG2_\v$ and $\CG3$ in terms of CPU time. Though $\CG4$ is slower than FISTA, it is competitive with iiCG and NPG.   


\begin{table}[t!]
\caption{Comparison on the methods for \eqref{l1-ls} with well-conditioned $\bA$}
\centering
\label{cg-res1}
\begin{tabular}{|rrr||c||cccccc|}
\hline
\multicolumn{3}{|c||}{Problem} &  \multicolumn{1}{c||}{Cardinality} & 
 \multicolumn{6}{c|}{CPU Time} \\
\multicolumn{1}{|c}{m} & \multicolumn{1}{c}{n} &  \multicolumn{1}{c||}{s}
& \multicolumn{1}{c||}{} & \multicolumn{1}{c}{\sc $\CG2_\v$} 
& \multicolumn{1}{c}{\sc $\CG3$} & \multicolumn{1}{c}{\sc $\CG4$} 
& \multicolumn{1}{c}{\sc FISTA}
& \multicolumn{1}{c}{\sc iiCG} & \multicolumn{1}{c|}{\sc NPG}  \\
\hline
120 & 512 & 20 &  24   & 0.01 & 0.01 & 0.01 & 0.00 & 0.01 & 0.01  \\
240 & 1024 & 40 & 56  & 0.06 & 0.05 & 0.03 & 0.00 & 0.03 & 0.02 \\
360 & 1536 & 60 & 77   & 0.13 & 0.18 & 0.07 &0.01 & 0.07 &0.07  \\
480 & 2048 & 80 & 113   & 0.28 & 0.50 & 0.17 &0.02 & 0.14 &0.14 \\
600 & 2560 & 100 & 137 & 0.52 & 1.23 & 0.26&0.05 & 0.26 & 0.24 \\
720 & 3072 & 120 & 155  & 0.82 & 2.41 & 0.46 &0.08& 0.42 &0.34  \\
840 & 3584 & 140& 188   & 1.58 & 3.47 & 0.57 &0.14&0.54 &0.47  \\
960 & 4096 & 160& 212   & 1.67 & 5.19 & 0.73 &0.18 & 0.79 &0.65 \\
1080 & 4608 & 180& 247   & 2.93 & 7.17 & 1.10 & 0.25& 1.03 & 0.88  \\
1200 & 5120 & 200& 269   & 3.10 & 9.90 & 1.34 & 0.32 & 1.34 & 1.19  \\
\hline
\end{tabular}
\end{table}

We next compare the performance of the aforementioned methods for 
solving problem \eqref{l1-ls} in which $\bA$ is ill-conditioned, namely, with a large $\kappa(\bA)$. The data $\bA$ and $\bb$ are similarly generated as above except that $\bA$ is set to $B^TD$ rather than $B^T$, where $D$ is an $n \times n$ diagonal matrix whose $i$th diagonal entry is $\min(i^2,10^6)$ for all $i$. In addition, we set $\tau=1$ for \eqref{l1-ls}, and $M=15$ for iiCG and NPG that 
generally gives better performance than the other choices of $M$ in this experiment.  We present in Table \ref{cg-res2}  the computational results obtained by those 
methods based on the termination criterion with $\delta=10^{-2}$. As in the 
first experiment, the approximate solutions found by those methods have 
almost same cardinality and nearly equal objective value. We thus 
only report the average cardinality of these approximate solutions and also the 
CPU times (in seconds) of each method. One can see from Table \ref{cg-res2} 
that $\CG2_\v$ consistently outperforms the other methods in terms of CPU 
time and appears to be more favorable for solving problem \eqref{l1-ls} with 
ill-conditioned $\bA$. 

\begin{table}[t!]
\caption{Comparison on the methods for \eqref{l1-ls} with ill-conditioned $\bA$}
\centering
\label{cg-res2}
\begin{tabular}{|rrr||c||cccccc|}
\hline
\multicolumn{3}{|c||}{Problem} &  \multicolumn{1}{c||}{Cardinality} & 
 \multicolumn{6}{c|}{CPU Time} \\
\multicolumn{1}{|c}{m} & \multicolumn{1}{c}{n} &  \multicolumn{1}{c||}{s}
& \multicolumn{1}{c||}{} & \multicolumn{1}{c}{\sc $\CG2_\v$} 
& \multicolumn{1}{c}{\sc $\CG3$} & \multicolumn{1}{c}{\sc $\CG4$} 
& \multicolumn{1}{c}{\sc FISTA}
& \multicolumn{1}{c}{\sc iiCG} & \multicolumn{1}{c|}{\sc NPG}  \\
\hline
120 & 512 & 20 &  36   & 0.17 & 0.39 & 0.23 & 0.36 & 0.60 & 6.23  \\
240 & 1024 & 40 & 57  & 0.85 & 2.02 & 1.22 & 1.06& 2.94 & 16.88  \\
360 & 1536 & 60 & 77  & 2.48 & 6.81 & 3.35 & 2.72& 3.37 & 7.02  \\
480 & 2048 & 80 & 114  & 6.19 & 22.27 & 9.66 & 8.26& 11.40 & 50.74  \\
600 & 2560 & 100 & 140 & 16.72 & 45.82 & 20.63 & 29.17& 32.00 & 251.68 \\
720 & 3072 & 120 & 154 & 27.67 & 89.63 & 39.82 & 55.43& 86.16 & 171.91  \\
840 & 3584 & 140& 186  & 49.55 & 148.23 & 67.83 & 98.76& 201.14 & 84.37  \\
960 & 4096 & 160& 212  & 96.08 & 233.86 & 110.54 & 153.72& 298.99 & 635.62  \\
1080 & 4608 & 180& 261  & 159.04 & 351.05 & 174.62 & 230.58 &406.29 & 463.51 \\
1200 & 5120 & 200& 285  & 227.87 & 468.09 & 272.06 & 320.31& 720.73 &6198.07 \\
\hline
\end{tabular}
\end{table}

\section{Concluding remarks}
\label{conclude}

In this paper we proposed generalized CG (GCG) methods for solving $\ell_1$ regularized convex QP.  When the tolerance parameter $\eps$ is set to $0$,  our GCG methods terminate at an optimal solution in a finite number of iterations, assuming no numerical errors.  We also show that our methods are capable of finding an approximate solution of the problem by allowing some inexactness on the execution of the CG subroutine. Numerical results demonstrate that our methods are very favorable for solving ill-conditioned problems. 

It can be observed that the main computation of our GCG methods lies in 
executing PCG iterations over a sequence of subspaces. Given any $\eps>0$,  one can see from Remarks 4-9 that the number of PCG iterations of these methods for finding an approximate solution $x$ satisfying $\|v(x)\| \le \sqrt{\eps}$ depends on $\eps$ in $O(\log(1/\eps))$. It follows from this and Theorem \ref{errbdd-F} that 
the number of PCG iterations for finding an $\eps$-optimal solution by 
 these methods depends on $\eps$ in $O(\log(1/\eps))$. 
Therefore, the overall operation cost of the GCG methods for finding an $\eps$-optimal solution depends on $\eps$ in $O(\log(1/\eps))$, which is superior to the accelerated proximal gradient (APG) method \cite{Beck,Nest13} whose operation cost  depends on $\eps$ in $O(1/\sqrt{\eps})$. In addition, at each iteration 
the main computation of APG is on full-dimensional matrix-vector products while that of PCG is on lower-dimensional matrix-vector products.    


Our GCG methods can be extended to solve the box-constrained convex QP  \eqref{bQP} by properly modifying the definitions of $v(\cdot)$, $\vp(\cdot)$, $I_0(\cdot)$, $I_+(\cdot)$, $I_-(\cdot)$, $I^0_0(\cdot)$,  $I^+_0(\cdot)$, $I^-_0(\cdot)$ and also the subroutines $\TPCG1$ and $\TPCG2$. The similar convergence results as in this paper can also be established. Due to the length limitation, such an extension is left in a separate paper. 

\appendix

\section{Appendix: The CG method for convex QP}

In this appendix we study CG method for solving (possibly not strongly) convex quadratic programming:
\beq \label{qp}
f^* = \inf\limits_{x\in\Re^n} f(x) := \frac12 x^T A x - b^T x,
\eeq
where $A \in \Re^{n\times n}$  is symmetric positive semidefinite, $b\in\Re^n$ and $f^* \in [-\infty,\infty)$. It is well-known that \eqref{qp} has at least an optimal solution and $f^*$ is finite if $b\in\range(A)$,  and $f^*=-\infty$ otherwise. The standard CG method (e.g., see \cite[Chapter 5]{NoWr06}) is proposed mainly for the case where $A$ is symmetric positive definite. In order to make the CG method applicable to the general convex QP \eqref{qp}, one has to modify the termination criterion of the standard CG method, which is usually in terms of the gradient of the objective of \eqref{qp}. The resulting CG method is presented as follows. 

 \gap

\noindent {\bf CG method for \eqref{qp}:}

\vspace{.1in}

\noindent{\bf Input:} $A$, $b$, $x^0$.

\vspace{.1in}

\noindent Set $r^0 = Ax^0-b$, $p^0=-r^0$, $k=0$.

\vspace{.1in}

\noindent {\bf while} $Ap^k \neq 0$
\beqa
&& \alpha_k = \frac{\|r^k\|^2}{(p^k)^T A p^k}; \label{alphak} \\ [5pt]
&& x^{k+1} = x^k + \alpha_k p^k; \label{xk} \\
&& r^{k+1} = r^k + \alpha_k A p^k; \label{rk} \\ [5pt]
&& \beta_{k+1} = \frac{\|r^{k+1}\|^2}{\|r^k\|^2}; \label{betak} \\ [5pt]
&& p^{k+1} = -r^{k+1} + \beta_{k+1} p^k; \label{pk} \\
&& k \leftarrow k+1. \nn
\eeqa
{\bf end (while)}

\gap

%
%
The convergence of the CG method for \eqref{qp} has been well studied  when $A$ is symmetric positive definite (e.g., see \cite{NoWr06} and the references therein). Nevertheless, for the case where $A$ is symmetric positive semidefinite, it has only been partially studied in the literature (e.g., see \cite{KaNa72,Ka88,Ha01}). As a self-contained reference, we next provide a more comprehensive study on the convergence of the CG method for problem \eqref{qp} with $A$ being symmetric positive \textit{semidefinite}.

\begin{theorem} \label{cg-prop}
The following statements hold for the above CG method:
\bi
\item[(i)] The CG method is well defined.
\item[(ii)] Suppose that the CG method has not yet terminated at the $k$th iterate, that is, $Ap^k \neq 0$  for some $k \ge 0$. Then the following properties hold:
\beqa 
(r^i)^T r^j =0, \quad\quad \mbox{for}\  i \neq j, \ i,j=0, 1, \ldots, k,  \nn\\
\Span\{r^0, r^1, \ldots, r^k\} = \Span\{r^0,Ar^0,\ldots,A^k r^0\},   \nn  \\
\Span\{p^0, p^1, \ldots, p^k\} = \Span\{r^0,Ar^0,\ldots,A^k r^0\},   \label{p-span} \\
(p^i)^T A p^j=0, \quad\quad \mbox{for}\  i \neq j, \ i,j=0, 1, \ldots, k, \label{p-orth} 
\eeqa  
where $\Span$ denotes the subspace spanned by the associated vectors. 
\ei
\end{theorem}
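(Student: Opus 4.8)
The plan is to prove both statements by a single induction on the iteration index $k$, carrying all four displayed identities of part (ii) together with two auxiliary relations that make the induction close: the defining recursion $p^k=-r^k+\beta_k p^{k-1}$ and the orthogonality $(r^k)^Tp^{k-1}=0$, the latter yielding $(r^k)^Tp^k=-\|r^k\|^2$. I would open with the elementary observation that, since $A$ is symmetric positive semidefinite, $(p^k)^TAp^k=0$ holds if and only if $Ap^k=0$; consequently, as long as the while-loop has not terminated (that is, $Ap^k\neq 0$), the denominator in \eqref{alphak} is strictly positive, so $\alpha_k$ is well defined. The base case $k=0$ is immediate: the two span identities both read $\Span\{r^0\}=\Span\{r^0\}$, while the orthogonality and conjugacy conditions are vacuous, and $p^0=-r^0$ makes the auxiliary relations trivial.

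For the inductive step I would assume the loop has continued through iterates $0,\ldots,k$ (so $Ap^0,\ldots,Ap^{k-1}\neq 0$, and each $r^j\neq 0$ for $j\le k$) and that $Ap^k\neq 0$, and then establish the properties at level $k+1$. The span identities follow by substituting $r^{k+1}=r^k+\alpha_k Ap^k$ and $p^{k+1}=-r^{k+1}+\beta_{k+1}p^k$ into the inductive hypothesis $\Span\{r^0,\ldots,r^k\}=\Span\{r^0,Ar^0,\ldots,A^kr^0\}=\Span\{p^0,\ldots,p^k\}$: the update adds a vector lying in $\Span\{Ar^0,\ldots,A^{k+1}r^0\}$, giving the inclusion, and a dimension count upgrades it to equality, since the residuals $r^0,\ldots,r^{k+1}$ are mutually orthogonal and nonzero, hence linearly independent, so the $(k+2)$-dimensional span on the left must coincide with the right. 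Residual orthogonality $(r^{k+1})^Tr^j=0$ for $j\le k$ is obtained by expanding $r^{k+1}=r^k+\alpha_k Ap^k$ and invoking both the inductive residual orthogonality and the conjugacy $(p^k)^TAp^j=0$, together with the definition of $\alpha_k$ for the diagonal term $j=k$; conjugacy $(p^{k+1})^TAp^j=0$ for $j\le k$ follows in the same spirit from the recursion for $p^{k+1}$, the choice of $\beta_{k+1}$, and the span identities that place $Ap^j$ in a Krylov subspace to which $r^{k+1}$ has just been shown orthogonal.

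The one point that goes beyond the classical positive-definite argument, and which I expect to be the main obstacle, is the well-definedness of the division by $\|r^k\|^2$ in \eqref{betak}: here the loop terminates on $Ap^k=0$ rather than on $r^k=0$, so one cannot simply identify a zero residual with termination. I would dispatch this by showing the loop cannot continue with a vanishing residual. Indeed, if $r^k=0$ then $\beta_k=\|r^k\|^2/\|r^{k-1}\|^2=0$ — legitimate because $r^{k-1}\neq 0$ by the inductive hypothesis — whence $p^k=-r^k+\beta_k p^{k-1}=0$ and therefore $Ap^k=0$, contradicting the assumption that the loop has not terminated at iterate $k$. Hence $r^k\neq 0$ whenever $Ap^k\neq 0$, which secures the denominators in both \eqref{alphak} and \eqref{betak} and thereby completes the verification of statement (i) in lockstep with the inductive construction underlying statement (ii).
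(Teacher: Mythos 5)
Your proposal is correct and takes essentially the same route as the paper: your treatment of well-definedness (the positive-semidefinite equivalence $(p^k)^TAp^k = 0 \Leftrightarrow Ap^k = 0$ for the denominator in \eqref{alphak}, together with the contradiction $r^k = 0 \Rightarrow \beta_k = 0 \Rightarrow p^k = -r^k + \beta_k p^{k-1} = 0 \Rightarrow Ap^k = 0$ for the denominator in \eqref{betak}) is exactly the paper's claim \eqref{ap-r}, and your induction for part (ii) is the classical argument of \cite[Theorem 5.3]{NoWr06}, which the paper invokes verbatim by citation. The only cosmetic differences are that you fold (i) and (ii) into a single induction and obtain the Krylov span equalities by a dimension count on the mutually orthogonal nonzero residuals (valid here because the hypothesis $Ap^{k}\neq 0$ at each level forces the corresponding residual to be nonzero) rather than by verifying both inclusions directly; neither changes the substance.
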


\begin{proof}
(i) We first claim that 
\beq \label{ap-r}
\mbox{if} \ Ap^k \neq 0\ \mbox{for some} \ k \ge 0, \ \mbox{then} \ r^k \neq 0. 
\eeq
Indeed, since $p^0=-r^0$, it is clear that $Ap^0\neq 0$ yields $r^0 \neq0$. Suppose for contradiction that $Ap^k \neq 0$ for some $k>0$ but $r^k=0$. It then follows from \eqref{betak} 
that $\beta_k =\|r^k\|^2/\|r^{k-1}\|^2=0$. This together with \eqref{pk} and $r^k=0$ 
gives $p^k = -r^k+\beta_k p^{k-1}=0$, which contradicts the assumption $Ap^k \neq 0$ 
and thus \eqref{ap-r} holds. In addition, since $A$ is symmetric positive semidefinite, one can see 
that $(p^k)^TAp^k \neq 0$ if and only if $Ap^k \neq 0$. By this fact, \eqref{ap-r} and an inductive argument, it is not hard to see that the CG method is well defined.  

(ii) The proof of statement (ii) is identical to that of  \cite[Theorem 5.3]{NoWr06}.
\end{proof}

\gap

The following result shows that the CG method terminates in a finite number of iterations and produces either an optimal solution of \eqref{qp} or a direction along which $f$ is unbounded below.  

\begin{theorem} \label{cg-unbdd} 
Let $\ell =\rank(A)$. The following properties hold for the CG method:
\bi
\item[(i)] If $b\in\range(A)$, the CG method terminates at some iteration $0\le k \le \ell$ and $x^k$ is an optimal solution of \eqref{qp};
\item[(ii)] If $b\notin \range(A)$,  the CG method terminates at some iteration $0\le k \le \ell+1$ and $f(x^k+\alpha p^k) \to -\infty$ as $\alpha \to \infty$.  
\ei
\end{theorem}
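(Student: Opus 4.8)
The plan is to organize everything around the terminal index, call it $k^*$, the first $k$ with $Ap^k=0$ at which the \textbf{while} loop stops, and to extract all structural information from the orthogonality and Krylov-span relations of Theorem \ref{cg-prop}(ii), which are guaranteed at every \emph{non-terminal} index. First I would record three elementary facts. (1) Since $x^{k+1}=x^k+\alpha_k p^k$ and $r^{k+1}=r^k+\alpha_k Ap^k$, an easy induction gives $r^k=Ax^k-b=\nabla f(x^k)$, so $x^k$ solves \eqref{qp} iff $r^k=0$. (2) Using $\alpha_k=\|r^k\|^2/((p^k)^TAp^k)$ in $r^{k+1}=r^k+\alpha_kAp^k$, one checks inductively that $(r^{k})^Tp^{k}=-\|r^k\|^2$ and $(r^{k+1})^Tp^k=0$ at every executed iteration, the base case being $(r^0)^Tp^0=-\|r^0\|^2$; crucially $(r^{k^*})^Tp^{k^*}=-\|r^{k^*}\|^2$ then holds even at the terminal index, because its derivation only uses that iteration $k^*-1$ has run. (3) By \eqref{ap-r}, every residual produced strictly before termination is nonzero.

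For the iteration counts I would invoke Theorem \ref{cg-prop}(ii) at the last non-terminal index $k^*-1$ (legitimate since $Ap^{k^*-1}\neq0$): the residuals $r^0,\dots,r^{k^*-1}$ are then mutually orthogonal and nonzero, hence independent, so $k^*=\dim\Span\{r^0,\dots,r^{k^*-1}\}=\dim\Span\{r^0,Ar^0,\dots,A^{k^*-1}r^0\}$. The argument now splits on where $r^0$ lives. In case (i), $b\in\range(A)$ forces $r^0=Ax^0-b\in\range(A)$, and $r^{k+1}=r^k+\alpha_kAp^k$ keeps every residual in $\range(A)$; the Krylov span thus lies in $\range(A)$, giving $k^*\le\ell$. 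In case (ii), $r^0\notin\range(A)$ whereas $Ar^0,\dots,A^{k^*-1}r^0\in\range(A)$, so the span has dimension at most $\ell+1$ and $k^*\le\ell+1$. Running one index past the claimed bound would produce too many orthogonal nonzero residuals for the ambient Krylov space, so the loop cannot run forever and $k^*$ exists.

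To finish case (i) I would show the terminal residual vanishes. Writing $p^{k^*}=-r^{k^*}+\beta_{k^*}p^{k^*-1}$ with $r^{k^*}\in\range(A)$ and $p^{k^*-1}\in\Span\{r^0,\dots,A^{k^*-1}r^0\}\subseteq\range(A)$ (by \eqref{p-span} at $k^*-1$) gives $p^{k^*}\in\range(A)$; since $A$ is injective on $\range(A)$ (for symmetric PSD $A$ one has $\range(A)\cap\Null(A)=\{0\}$), $Ap^{k^*}=0$ forces $p^{k^*}=0$, whence $0=(r^{k^*})^Tp^{k^*}=-\|r^{k^*}\|^2$ and $r^{k^*}=0$, so $x^{k^*}$ is optimal. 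For case (ii) I would instead use that $Ap^{k^*}=0$ kills the quadratic term: expanding along $p^{k^*}$,
\[
f(x^{k^*}+\alpha p^{k^*})=f(x^{k^*})+\alpha\,(r^{k^*})^Tp^{k^*}+\frac12\alpha^2 (p^{k^*})^TAp^{k^*}=f(x^{k^*})-\alpha\|r^{k^*}\|^2,
\]
and since $r^{k^*}\neq0$ here (otherwise $b=Ax^{k^*}\in\range(A)$, a contradiction), the right-hand side tends to $-\infty$ as $\alpha\to\infty$.

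The main obstacle, and the motivation for the bookkeeping above, is that Theorem \ref{cg-prop}(ii) certifies orthogonality and the Krylov identity only where $Ap^k\neq0$, i.e.\ strictly before termination; the delicate part is transferring just enough to the terminal index $k^*$ by direct computation rather than by quoting the theorem at $k^*$ — specifically $(r^{k^*})^Tp^{k^*}=-\|r^{k^*}\|^2$ and, in case (i), $p^{k^*}\in\range(A)$. The second point requiring care is keeping the two Krylov-dimension bounds ($\ell$ versus $\ell+1$) cleanly separated by tracking whether $r^0$ carries a nonzero $\Null(A)$-component.
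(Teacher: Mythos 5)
Your proof is correct and follows essentially the same route as the paper: a Krylov-space dimension count based on Theorem \ref{cg-prop}(ii) to get the bounds $k\le\ell$ and $k\le\ell+1$, plus the identity $(r^k)^Tp^k=-\|r^k\|^2$ at the terminal index (valid because only iteration $k-1$ need have run) to conclude optimality in case (i) and unboundedness along $p^k$ in case (ii). The only cosmetic differences are that the paper counts the $A$-conjugate, hence linearly independent, directions $p^0,\dots,p^k$ rather than your orthogonal nonzero residuals, and in case (i) it gets $r^k=0$ directly from $(r^k)^Tp^k=\xi^TAp^k=0$ (writing $r^k=A\xi$) together with $(r^k)^Tp^{k-1}=0$, instead of your slightly longer detour through $p^{k^*}\in\range(A)$ and the injectivity of $A$ on its range.
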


\begin{proof}
(i) Assume that $b\in\range(A)$. Suppose for contradiction that the CG method does not terminate for all $0 \le k \le \ell$. It then follows that $Ap^k \neq 0$ and hence $p^k \neq 0$ 
for every $0 \le k \le \ell$. From \eqref{p-orth} we also know that $\{p^i\}^{\ell}_{i=0}$ 
are conjugate with respect to $A$. Thus $\{p^i\}^{\ell}_{i=0}$  are linearly independent. 
In view of this and \eqref{p-span}, one can see that the dimension of $\Span\{r^0,Ar^0,\ldots,A^{\ell} r^0\}$ is $\ell+1$. On  the other hand , by the assumption $b\in\range(A)$, 
we observe that $r^0 = Ax^0-b \in\range(A)$. It yields $\Span\{r^0,Ar^0,\ldots,A^{\ell} r^0\} \subseteq \range(A)$, which along with $\ell=\rank(A)$ implies that the dimension of $\Span\{r^0,Ar^0,\ldots,A^{\ell} r^0\}$ is at most $\ell$. This  leads to a contradiction. Thus the CG method terminates at some $0\le k\le \ell$ with $Ap^k=0$. We next show that $x^k$ is an optimal solution of \eqref{qp}. Since \eqref{qp} is a convex problem, it suffices to show $\nabla f(x^k)=0$. Notice that $r^k=Ax^k - b$, which along with the assumption $b\in\range(A)$ implies that $r^k=A\xi$ for some vector $\xi$. By this and $Ap^k=0$, we have 
$(r^k)^Tp^k=\xi^T Ap^k=0$. Since the exact line search is performed at each 
iteration of CG, one has $(r^k)^T p^{k-1}=0$. In addition, it follows from \eqref{pk} that 
$r^k = - p^k+\beta_k p^{k-1}$. Using these relations, we have 
$
\|r^k\|^2 = (r^k)^T (- p^k+\beta_k p^{k-1})=0.
$
Hence, $\nabla f(x^k)=r^k=0$ as desired.

 (ii) Assume that $b\notin \range(A)$. Suppose for contradiction that the CG method does not terminate for all $0 \le k \le \ell+1$. By a similar argument as in 
statement (i), one knows that $\{p^i\}^{\ell+1}_{i=0}$  are linearly independent. Using this and \eqref{p-span}, we see that the dimension of $\Span\{r^0,Ar^0,\ldots,A^{\ell+1} r^0\}$ is $\ell+2$. In addition, notice that  
$\Span\{r^0,Ar^0,\ldots,A^{\ell+1} r^0\} \subseteq \Span\{r^0 \cup \range(A)\}$. 
This and $\ell=\rank(A)$ imply that the dimension of $\Span\{r^0,Ar^0,\ldots,A^{\ell+1} r^0\}$ is at most $\ell+1$, which leads to a contradiction. Thus the CG method terminates at some $0\le k\le \ell+1$ with $Ap^k=0$. It remains to show that $f(x^k+\alpha p^k) \to -\infty$ as $\alpha \to \infty$. By the same argument as above, one has $(r^k)^T p^{k-1}=0$. Also, by the assumption $b\notin \range(A)$, we know that $r^k \neq 0$. In addition,  notice from \eqref{pk} that $p^k = -r^k + \beta_k p^{k-1}$. It follows from these relations that $(r^k)^T p^k = -\|r^k\|^2 <0$. In view of this and $Ap^k=0$, we have
\[
f(x^k+\alpha p^k) = f(x^k) + \alpha(r^k)^T p^k  + \frac12 \alpha^2 (p^k)^T A p^k = f(x^k) + \alpha(r^k)^T p^k 
\ \to -\infty \ \mbox{as} \ \alpha \to \infty.
\]
\end{proof}

\gap

We next study some further convergence properties of the CG method under the following assumption. 
\begin{assumption} \label{assump-qp}
 $b \in \range(A)$.
\end{assumption}

This Assumption implies that problem \eqref{qp} has at least an optimal solution and $f^*$ is finite. As shown above, the CG method terminates at an optimal solution of \eqref{qp} in a finite number of iterations. We next show that the convergence of CG 
may depend on the eigenvalue distribution and a certain condition number of $A$. 
To this end,  let $\ell=\rank(A)$ and $\lambda_1\ge \lambda_2 \ge\cdots \ge\lambda_\ell>0$ be all nonzero eigenvalues of $A$. In addition, let $W\in \Re^{n\times \ell}$ and $V\in \Re^{n\times (n-\ell)}$
such that
$$A=[W \ V]\left[\ba{cc}
\hA  & 0 \\
0 & 0
\ea\right] \left[\ba{c}
W^T \\
V^T
\ea\right], \quad [W \ V]^T[W \ V]=I, \quad \hat{A}={\rm diag}(\lambda_1,\ldots,\lambda_\ell),$$
which is a standard eigenvalue decomposition of $A$. 
Clearly, $A=W\hat{A}W^T$ and $\hat{A}=W^TAW$. Moreover, it is not hard to observe that  
\beq \label{w-proj}
A=AWW^T, \quad\quad WW^T x = x \quad \forall x \in \range(A).
\eeq
Let $\{r^k\}$, $\{p^k\}$ and $\{x^k\}$ be generated by the above CG method. Define
\beq \label{wvec}
\hb = W^T b, \quad
\hr^k = W^T r^k, \quad \hp^k = W^T p^k, \quad \hx^k = W^T x^k.
\eeq


\begin{lemma} \label{prop-Wvec}
Let $\hA$, $\{\alpha_k\}$, $\{\beta_k\}$, $\{r^k\}$, $\{\hr^k\}$, $\{\hx^k\}$ and $\{\hp^k\}$
be defined above. Under Assumption \ref{assump-qp}, the following relations hold 
for any $k \ge 0$ such that $r^k \neq 0$:
\bi
\item[(i)] $\alpha_k = \frac{\|\hr^k\|^2}{(\hp^k)^T \hA \hp^k}$;
\item[(ii)] $\hx^{k+1} = \hx^k + \alpha_k \hp^k$;
\item[(iii)] $\hr^{k+1} = \hr^k + \alpha_k \hA \hp^k$;
\item[(iv)] $\beta_{k+1} = \frac{\|\hr^{k+1}\|^2}{\|\hr^k\|^2}$;
\item[(v)] $\hp^{k+1} = -\hr^{k+1} + \beta_{k+1} \hp^k$.
\ei
\end{lemma}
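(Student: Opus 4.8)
The plan is to observe that each of the five reduced recurrences is obtained from the corresponding full-space recurrence of the CG method simply by applying $W^T$ and exploiting the factorization $A=W\hA W^T$ together with $W^TW=I$. Two structural facts make this work. First, every residual $r^k$ and every search direction $p^k$ lies in $\range(A)$. Second, on $\range(A)$ the linear map $W^T$ preserves norms, since $WW^Tx=x$ there by \eqref{w-proj}, so that $\|W^Tx\|^2=x^TWW^Tx=\|x\|^2$ for all $x\in\range(A)$.

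First I would establish by induction that $r^k,p^k\in\range(A)$ for all $k$. For the base case, $r^0=Ax^0-b$ lies in $\range(A)$ because $Ax^0\in\range(A)$ and $b\in\range(A)$ by Assumption \ref{assump-qp}, and then $p^0=-r^0\in\range(A)$. For the inductive step, \eqref{rk} gives $r^{k+1}=r^k+\alpha_k Ap^k$, a sum of elements of $\range(A)$, and \eqref{pk} gives $p^{k+1}=-r^{k+1}+\beta_{k+1}p^k\in\range(A)$. In particular the norm identity above yields $\|\hr^k\|=\|r^k\|$ for all $k$.

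Next I would record the two algebraic identities flowing from $A=W\hA W^T$ and $W^TW=I$: namely $W^TAp^k=\hA W^Tp^k=\hA\hp^k$, and $(p^k)^TAp^k=(W^Tp^k)^T\hA(W^Tp^k)=(\hp^k)^T\hA\hp^k$. With these and the norm identity in hand, the five claims follow by direct substitution. Relation (i) combines $\alpha_k=\|r^k\|^2/((p^k)^TAp^k)$ with $\|r^k\|=\|\hr^k\|$ and the quadratic-form identity; (ii) and (v) come from applying the linear map $W^T$ to \eqref{xk} and \eqref{pk}; (iii) from applying $W^T$ to \eqref{rk} and invoking $W^TAp^k=\hA\hp^k$; and (iv) from $\beta_{k+1}=\|r^{k+1}\|^2/\|r^k\|^2$ with the norm identity applied at indices $k$ and $k+1$.

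The verification is entirely mechanical, so there is no genuine obstacle beyond bookkeeping; the single point that must not be overlooked is the range-space membership established in the first step. It is precisely what makes $W^T$ norm-preserving on the relevant vectors, and hence what is responsible for the appearance of $\|\hr^k\|$ in (i) and (iv). Without Assumption \ref{assump-qp} the base case $r^0\in\range(A)$ can fail and these two identities would break down, even though (ii), (iii) and (v) would still hold by linearity alone.
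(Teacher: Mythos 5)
Your reduction is correct and is essentially the argument the paper gives: apply $W^T$ to the recurrences \eqref{alphak}--\eqref{pk} and use $A=W\hA W^T$, $W^TW=I$, together with $r^k\in\range(A)$ to obtain $\|\hr^k\|=\|r^k\|$. Two small differences are worth noting. First, the paper gets $r^k\in\range(A)$ in one line from the CG invariant $r^k=Ax^k-b$ and Assumption \ref{assump-qp}, rather than by induction on \eqref{rk} and \eqref{pk}; your induction is fine, but the membership $p^k\in\range(A)$ that you carry along is never needed, since $W^TAp^k=\hA\hp^k$ and $(p^k)^TAp^k=(\hp^k)^T\hA\hp^k$ already follow from $A=W\hA W^T$ (equivalently $A=AWW^T$, the first identity in \eqref{w-proj}) for an arbitrary vector $p^k$.

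Second, and this is the one genuine omission: you never verify that the quantities appearing in (i)--(v) exist when $r^k\neq 0$. The while loop runs only while $Ap^k\neq 0$, so for $\alpha_k$, $x^{k+1}$, $r^{k+1}$, $\beta_{k+1}$, $p^{k+1}$ to be generated at all --- and for the denominator $(\hp^k)^T\hA\hp^k=(p^k)^TAp^k$ in (i) to be nonzero --- you must show that $r^k\neq 0$ implies $Ap^k\neq 0$. This is where Assumption \ref{assump-qp} enters a second time: by Theorem \ref{cg-unbdd}(i), if $Ap^k=0$ then the method terminates with $x^k$ optimal, whence $r^k=\nabla f(x^k)=0$; combined with \eqref{ap-r} this gives $Ap^k\neq 0$ if and only if $r^k\neq 0$ under the assumption, which is exactly how the paper opens its proof of this lemma. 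Adding this one observation closes the gap; everything else in your write-up, including the closing remark that only (i) and (iv) rely on the norm identity and hence on $b\in\range(A)$, matches the paper's proof.
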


\begin{proof}
We observe from \eqref{ap-r} and Theorem \ref{cg-unbdd} (i) that under 
Assumption \ref{assump-qp},  $Ap^k \neq 0$ if and only if $r^k \neq 0$. It follows that if $r^k \neq 0$ for some $k\ge 0$, the CG method does not terminate at iteration $k$ and $(\alpha_k,x^{k+1},r^{k+1},\beta_{k+1},p^{k+1})$ must be generated. Thus 
$\hx^{k+1}$, $\hr^{k+1}$ and $\hp^{k+1}$ are well defined. 
It is known for the CG method that $r^k = Ax^k-b$, which along with 
Assumption \ref{assump-qp} implies that $r^k \in \range(A)$. It then follows from \eqref{w-proj} that $WW^T r^k = r^k$. 
Using this relation, $A=W\hA W^T$ and \eqref{wvec}, we have
\beqa
&& \|\hr^k\|^2 = (W^Tr^k)^T (W^T r^k) = (r^k)^T (WW^T r^k) = (r^k)^T r^k = \|r^k\|^2, \label{hr} \\ [5pt]
&& (\hp^k)^T \hA \hp^k = (W^Tp^k)^T \hA  (W^T p^k) = (p^k)^T (W\hA W^T) p^k =  (p^k)^T A p^k, \nn
\eeqa
which together with \eqref{alphak} yields statement (i). Pre-multiplying
\eqref{xk} by $W^T$ and using \eqref{wvec}, one can see that statement (ii) holds. In addition, pre-multiplying \eqref{rk} by $W^T$ and using 
\eqref{w-proj}, \eqref{wvec} and $\hat{A}=W^TAW$, we have 
\[
\hr^{k+1} = W^T r^{k+1} = W^T r^k + \alpha_k W^T Ap^k = \hr^k + \alpha_k (W^T AW)(W^T p^k) = \hr^k + \alpha_k \hA \hp^k 
\]
and thus statement (iii) holds. Statement (iv) follows from \eqref{betak} and \eqref{hr}. Finally, statement (v) follows from
\eqref{wvec} and the pre-multiplication of \eqref{pk} by $W^T$.
\end{proof}

\gap

We are now ready to establish some further convergence properties for the CG method for solving
problem \eqref{qp}.

\begin{theorem} \label{CG-convergence}
Let $\{x^k\}$ be generated by the CG method applied to problem \eqref{qp} and 
$\ell=\rank(A)$. 
 Under Assumption \ref{assump-qp}, the following properties hold:
\bi
\item[(i)] If $A$ has $\hell$ distinct nonzero eigenvalues, the CG method terminates at an optimal
solution of problem \eqref{qp} in at most $\hell$ iterations;
\item[(ii)] $f(x^k) -f^* \le \left(\frac{\lambda_k-\lambda_\ell}{\lambda_k+\lambda_\ell}\right)^2 \left(f(x^0)-f^*\right)$ for all $1 \le k \le \ell$;
\item[(iii)] $f(x^k)-f^* \le 4 \left(\frac{\sqrt{\kappa(A)}-1}{\sqrt{\kappa(A)}+1}\right)^{2k}\left(f(x^0)-f^*\right)$ for all $k \ge 0$, where $\kappa(A)$ is defined in \eqref{kappa}. 
\ei
\end{theorem}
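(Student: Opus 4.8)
The plan is to reduce the positive-\emph{semidefinite} problem \eqref{qp} to a strongly convex one on the range space of $A$, where the classical CG convergence theory applies verbatim. The engine for this reduction is Lemma \ref{prop-Wvec}: it shows that the projected quantities $\hx^k=W^Tx^k$, $\hr^k=W^Tr^k$, $\hp^k=W^Tp^k$ obey exactly the CG recursions \eqref{alphak}--\eqref{pk} with $A$ replaced by the positive \emph{definite} matrix $\hA={\rm diag}(\lambda_1,\dots,\lambda_\ell)$ and $b$ replaced by $\hb=W^Tb$. Hence $\{\hx^k\}$ is precisely the CG sequence for the strongly convex QP $\min_z \hf(z):=\tfrac12 z^T\hA z-\hb^Tz$ started at $\hx^0=W^Tx^0$, and by \eqref{hr} its residuals have the same norms as those of the original method, so the two methods terminate at the same iteration (using $Ap^k\neq0\iff r^k\neq0$, which follows from \eqref{ap-r}, Theorem \ref{cg-unbdd}(i) and the proof of Lemma \ref{prop-Wvec}).

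The second preliminary step is to verify that the objective gap is preserved exactly under this reduction. Letting $x^*$ be any optimal solution of \eqref{qp} (so $Ax^*=b$ under Assumption \ref{assump-qp}) and $\hx^*:=W^Tx^*$, I would use $A=W\hA W^T$ to write $f(x^k)-f^*=\tfrac12(x^k-x^*)^TA(x^k-x^*)=\tfrac12\|\hx^k-\hx^*\|_{\hA}^2$. The identity $A=AWW^T$ from \eqref{w-proj} then gives $\hA\hx^*=W^TAWW^Tx^*=W^TAx^*=\hb$, so $\hx^*$ is the unique minimizer of $\hf$ and $f(x^k)-f^*=\hf(\hx^k)-\hf^*$. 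Together with the first step this means every statement to be proved for $\{x^k\}$ is equivalent to the corresponding classical statement for CG applied to the positive definite $\hA$, whose largest and smallest eigenvalues are $\lambda_1$ and $\lambda_\ell$, so that $\kappa(\hA)=\lambda_1/\lambda_\ell=\kappa(A)$ by \eqref{kappa}.

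It then remains to invoke the standard CG error analysis for $\hA$, which I would phrase through the minimax-polynomial bound $\hf(\hx^k)-\hf^*\le\min_{P}\max_{1\le i\le\ell}P(\lambda_i)^2\,(\hf(\hx^0)-\hf^*)$, the minimum taken over polynomials $P$ of degree $k$ with $P(0)=1$. For (i), choosing $P$ to vanish at the $\hell$ distinct eigenvalues of $\hA$ forces the bound to zero at $k=\hell$, so CG terminates with $\hx^{\hell}=\hx^*$, i.e.\ at an optimal solution of \eqref{qp}. For (ii), I would take $P$ to vanish at the $k-1$ largest eigenvalues $\lambda_1,\dots,\lambda_{k-1}$ and to equal the optimal degree-one polynomial with $P(0)=1$ on the remaining spectrum $\{\lambda_k,\dots,\lambda_\ell\}\subseteq[\lambda_\ell,\lambda_k]$, which contributes the factor $(\lambda_k-\lambda_\ell)/(\lambda_k+\lambda_\ell)$. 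For (iii), I would use the rescaled Chebyshev polynomial on $[\lambda_\ell,\lambda_1]$, yielding the familiar factor $4\big((\sqrt{\kappa(A)}-1)/(\sqrt{\kappa(A)}+1)\big)^{2k}$.

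The main obstacle I anticipate is not any single computation but getting the reduction in the first two steps airtight: in the semidefinite setting one must be careful that $\hx^*=W^Tx^*$ really is the reduced optimizer and that the seminorm $\|\cdot\|_A$ collapses cleanly to the genuine $\hA$-norm on the range space, both of which hinge on the projection identities \eqref{w-proj}. Once that equivalence is in place, parts (ii) and (iii) are the classical Chebyshev estimates and may be cited from the standard analysis, with (ii) requiring only the slightly less routine polynomial that annihilates the top $k-1$ eigenvalues before applying the single-step bound.
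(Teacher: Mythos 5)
Your proposal is correct and follows essentially the same route as the paper: both reduce the semidefinite problem to the strongly convex reduced QP in $\hx = W^T x$ via Lemma \ref{prop-Wvec}, verify that the objective gaps are preserved (the paper through the identities $f(x)=\hf(W^Tx)$ and $f^*=\hf^*$, you through $\hx^*=W^Tx^*$ and the $\hA$-norm identity, which amount to the same thing), and use $\kappa(\hA)=\kappa(A)$. The only cosmetic difference is that the paper invokes the classical convergence results for positive definite CG (Theorems 5.4 and 5.5 and equation (5.36) of \cite{NoWr06}) as black boxes, whereas you sketch their standard minimax/Chebyshev-polynomial proofs.
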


\begin{proof}
Let $W$, $\hA$, $\hb$, $\{\alpha_k\}$, $\{\hx^k\}$, $\{\hr^k\}$, $\{\beta_k\}$ and $\{\hp^k\}$
be defined above. 
Recall that for the CG method, $Ap^k \neq 0$ if and only if $r^k\neq 0$. 
This along with \eqref{hr} implies that $Ap^k \neq 0$ if and only if $\hr^k\neq 0$. 
In view of \eqref{w-proj}, \eqref{wvec}, $\hA=W^TA W$ and $r^0=Ax^0-b$, one has
\[
\hr^0 = W^T r^0 = W^T (Ax^0-b) = W^T(A W W^T x^0 - b ) = (W^TA W) W^T x^0 - W^T b = \hA \hx^0 - \hb. 
\]
Using these and statements (i)-(v) of Lemma \ref{prop-Wvec}, we  observe that $\{\alpha_k\}$, $\{\hx^k\}$,
$\{\hr^k\}$, $\{\beta_k\}$ and $\{\hp^k\}$ can be viewed as the sequences generated by the CG method applied to
the problem
\beq \label{reduced-qp}
\hf^* = \min\limits_{\hx \in \Re^{\ell}} \hf(\hx) := \frac12 \hx^T \hA \hx - \hb^T \hx.
\eeq
Claim that the following relations hold for problems \eqref{qp} and \eqref{reduced-qp}.
\bi
\item[(a)] $f(x) = \hf(W^Tx)$ for all $x\in\Re^n$;
\item[(b)] $f(W\hx) = \hf(\hx)$ for all $\hx\in\Re^\ell$;
\item[(c)] $f^*=\hf^*$.
\ei
Indeed, since $b\in\range(A)$, it follows from \eqref{w-proj} that $WW^Tb=b$.  Using this, $A=W\hA W^T$ and \eqref{wvec}, we see that for any $x\in\Re^n$, 
\[
\ba{rcl}
(W^Tx)^T \hA (W^Tx)=x^T Ax, \quad \quad
\hb^T (W^Tx) = (b^TWW^T) x= b^T x,
\ea
\]
which along with the definitions of $f$ and $\hf$ imply that property (a) holds. In addition, by \eqref{wvec}, $\hA=W^TA W$ and the definitions of $f$
and $\hf$, one has for any $\hx\in\Re^\ell$,
\[
f(W\hx) = \frac12 \hx^T (W^T A W) \hx + (W^T b)^T \hx =  \frac12 \hx^T \hA \hx + \hb^T \hx = \hf(\hx)
\]
and hence property (b) holds. Combining properties (a) and (b), it is not 
hard to see that property (c) holds.

Notice that $\hA$ is symmetric positive definite. Thus $\{\hx^k\}$ is the sequence generated by the CG method for solving the strongly convex quadratic program \eqref{reduced-qp} with $\hx^0=W^Tx^0$. It follows that the
convergence results of the CG method for strongly convex QP, which are presented in \cite[Theorems 5.4 and 5.5 and equation (5.36)]{NoWr06}, hold for $\{\hx^k\}$. Let  $\hx^*$ be the unique optimal solution of \eqref{reduced-qp}. We are now ready to prove statements (i)-(iii) by using this observation and the above properties (a)-(c).


(i) Notice that \cite[Theorem 5.4]{NoWr06} holds for $\{\hx^k\}$. Then there exists some $0 \le \tk \le \hell$ such that $\hr^k=0$, which along with \eqref{hr} implies $r^k=0$. It follows that $Ap^k=0$ and $x^k$ is an optimal solution of \eqref{qp}. 
Hence, the CG method terminates at an optimal solution of \eqref{qp} in at most $\hell$ iterations.

(ii) 
Applying \cite[Theorem 5.5]{NoWr06} to problem \eqref{reduced-qp}, one has
\[
\|\hx^k -\hx^*\|^2_\hA \le \left(\frac{\lambda_k-\lambda_\ell}{\lambda_k+\lambda_\ell}\right)^2 \|\hx^0 -\hx^*\|^2_\hA, \quad \forall 1 \le k \le \ell,
\]
where $\|\hx\|_\hA=\sqrt{\hx^T \hA \hx}$ for any $\hx\in\Re^\ell$. Observe that 
$\hf(\hx) - \hf^* = \|\hx -\hx^*\|^2_\hA/2$ for all $\hx$. 
It thus follows that
\[
\hf(\hx^k) -\hf^* \le \left(\frac{\lambda_k-\lambda_\ell}{\lambda_k+\lambda_\ell}\right)^2 (\hf(\hx^0)-\hf^*), \quad
\forall \,\, 1 \le k \le \ell.
\]
 The conclusion of this statement follows from this relation, $\hx^k=W^T x^k$ and
the above properties (a) and (c).

(iii) Applying \cite[equation (5.36)]{NoWr06} to problem \eqref{reduced-qp}, we have
\[
\|\hx^k -\hx^*\|^2_\hA \le 4 \left(\frac{\sqrt{\kappa(\hA)}-1}{\sqrt{\kappa(\hA)}+1}\right)^{2k}\|\hx^0 -\hx^*\|^2_\hA, \quad \forall k \ge 0.
\]
Notice that $A$ shares the same nonzero eigenvalues with $\hA$. It follows from \eqref{kappa} that $\kappa(A)=\kappa(\hA)$. The conclusion of this statement then follows from these relations and a similar 
argument as for statement (ii).
\end{proof}

\section*{Acknowledgment}

The authors are grateful to Guoyin Li and Ting Kei Pong for bringing their attention to the reference \cite{Li95} that is greatly helpful in deriving the results of Section 
\ref{err-bdds}.



\begin{thebibliography}{9}


  
\bibitem{AfBiFi10}
M.~V.~Afonso, J.~M.~Bioucas-Dias, and M.~A.~T.~Figueiredo. 
\newblock Fast image recovery using variable splitting and constrained optimization. \newblock {\em IEEE T. Signal Proces.}, 19(9):2345--2356, 2010.

\bibitem{Beck}
A.~Beck and M.~Teboulle. A fast iterative shrinkage-thresholding algorithm for
linear inverse problems. {\em SIAM J. Imaging Sci.}, 2:183--202, 2009.

  
\bibitem{BeCaGr11}
S.~R.~Becker, E.~J.~Cand\`{e}s, and M.~C.~Grant.
\newblock Templates for convex cone problems with applications to sparse
signal recovery. 
\newblock {\em Math. Prog. Comput.}, 3(3):165--218, 2011.

  
\bibitem{BiFi07}
J.~M.~Bioucas-Dias and M.~A.~T.~Figueiredo. 
\newblock A new TwIST: two-step iterative Shrinkage/Thresholding algorithms
for image restoration.
\newblock {\em IEEE T. Signal Proces.}, 16(12):2992--3004, December 2007.

 \bibitem{BDE}
A.~M.~Bruckstein, D.~L.~Donoho, and M.~Elad.
\newblock From sparse solutions of systems of equations
to sparse modeling of signals and images.
\newblock {\em SIAM Rev.}, 51:34--81, 2009.

\bibitem{Byrd}
R.~H.~Byrd, J.~Nocedal, and S.~Solntsev. 
\newblock An algorithm for quadratic $\ell_1$-regularized optimization with a flexible active-set strategy. 
\newblock {\em Optim. Method Softw.}, 30(6):1213--1237, 2015.

\bibitem{CanRom05}
E.~Cand\`{e}s and J.~Romberg.
\newblock {$\ell_1$}-magic : Recovery of sparse signals via convex programming.
\newblock User's guide, Applied {$\&$} Computational Mathematics, California
  Institute of Technology, Pasadena, CA 91125, USA, October 2005.
\newblock Available at www.l1-magic.org.

\bibitem{ChDoSa98}
S.~Chen, D.~Donoho, and M.~Saunders. 
\newblock Atomic decomposition by basis pursuit.
\newblock {\em SIAM J. Sci. Comput.}, 20:33-61, 1998.

\bibitem{DeYiZh11}
W.~Deng, W.~Yin, and Y.~Zhang. 
\newblock Group sparse optimization by alternating direction method. 
\newblock TR11-06, Department of Computational and Applied Mathematics, Rice University, 2011.

\bibitem{Dono95}
D.~L.~Donoho.
\newblock De-noising by soft-thresholding.
\newblock {\em IEEE Trans. Inform. Theory}, 41(3):613--627, 1995.

    \bibitem{Dostal1}
    Z.~Dostal. 
    \newblock Box constrained quadratic programming with proportioning
    and projections. 
    \newblock {\em SIAM J. Optim.}, 7:871--887, 1997.

    \bibitem{Dostal2}
    Z.~Dostal and J.~Sch\"oberl. 
\newblock Minimizing quadratic functions subject to bound
    constrained with the rate of convergence and finite termination. 
\newblock {\em Comp. Optim. Appl.}, 30:23--43, 2005.


\bibitem{FoGo14}
K.~Fountoulakis and J.~Gondzio. 
\newblock A second-order method for strongly-convex l1-regularization problems.
\newblock Technical Report ERGO-14-005, 2014.

\bibitem{Gold}
T.~Goldstein and S.~Osher. 
\newblock The split Bregman method for $\ell_1$-regularized problems. 
{\em SIAM J. Imaging Sci.,} 2:323--343, 2009.

\bibitem{HaYiZh07}
E.~T.~Hale, W.~Yin, and Y.~Zhang.
\newblock Fixed-point continuation applied to compressed sensing: Implementation and numerical experiments.
\newblock {\em J. Comput. Math}, 28(2):170-194, 2010.

\bibitem{HaTiFr01}
T.~Hastie, R.~Tibshirani, and J.~H.~Friedman.
\newblock The elements in statistical learning: data mining, inference, and prediction, Springer-Verlag, New York, 2001.


\bibitem{Ha01}
K.~Hayami.
\newblock On the behavior of the conjugate residual method for singular systems.
\newblock NII technical report, National Institute of Informatics, Japan, 2001.

\bibitem{Ka88}
E.~F.~Kaasschieter.
\newblock Preconditioned conjugate gradients for solving singular systems.
\newblock {\em J. Comput. Appl. Math.}, 24:265--275, 1988. 

\bibitem{KaNa72}
W.~J.~Kammerer and M.~Z.~Nashed.
\newblock On the convergence of the conjugate gradient method for singular linear operator equations.
\newblock {\em SIAM J. Numer. Anal.}, 9(1):165--181, 1972.

\bibitem{KiKoLuBo07}
S.-J.~Kim, K.~Koh, M.~Lustig, S.~Boyd, and D.~Gorinevsky. 
\newblock An interior-point method for large-scale $\ell_1$-regularized least squares. 
\newblock {\em IEEE T. Signal Proces.}, 1(4):606--617, 2007.
    
 \bibitem{Li95}
W.~Li. 
\newblock Error bounds for piecewise convex quadratic programs and applications. 
\newblock{\em SIAM J. Control Optim.}, 33(5):1510--1529, 1995.



\bibitem{Ulbrich}
A.~Milzarek and M.~Ulbrich.
\newblock A semismooth Newton method with multidimensional filter globalization for $l_1$-optimization.
\newblock SIAM J. Optim., 24: 298-333, 2014.

\bibitem{Nest13}
Yu.~Nesterov.
\newblock Gradient methods for minimizing composite functions.
\newblock {\em Math. Prog.}, 140(1): 125--161, 2013.

    \bibitem{NoWr06}
    J. Nocedal and S. J. Wright.
    \newblock {\em Numerical Optimization}.
    \newblock  Springer, 2nd, New York, 2006.

\bibitem{Oleary}
D.~P.~O'Leary. A generalized conjugate gradient algorithm for solving a class of quadratic programming problems. {\em Linear Alg. Appl.}, 34: 371--399, 1980.


\bibitem{Steihaug}
T.~Steihaug. 
\newblock The conjugate gradient method and trust regions in large scale
optimization. 
\newblock {\em SIAM J. Numer. Anal.}, 20: 626--637, 1983.


\bibitem{Toint}
P.~L.~Toint. 
\newblock Towards an efficient sparsity exploiting Newton method for minimization,
\newblock {\em in Sparse Matrices and Their Uses}, I.~S.~Duff, ed., London and New York,  Academic Press, 57--88, 1981.

\bibitem{Sch10}
M.~Schmidt. 
\newblock Graphical Model Structure Learning with $\ell_1$-Regularization.
\newblock PhD thesis, University of British Columbia, 2010.

\bibitem{SrNoWr11}
S.~Sra, S.~Nowozin, and S.~J.~Wright. 
\newblock Optimization for Machine Learning. MIT Press, 2011.

 \bibitem{WrNoFi09}
S.~J.~Wright, R.~Nowak, and M.~A.~T.~Figueiredo.
\newblock Sparse reconstruction by separable approximation.
\newblock {\em IEEE T. Signal Proces.}, 57:2479--2493, 2009.

\bibitem{XiZh13}
L.~Xiao and T.~Zhang.
\newblock A proximal-gradient homotopy method for the sparse least-squares problem.
\newblock {\em SIAM J. Optim.}, 23(2):1062--1091, 2013.

\bibitem{JYang}
J.~Yang and Y.~Zhang. 
\newblock Alternating direction algorithm for $\ell_1$-problems in compressive sensing.
\newblock {\em SIAM J. Sci. Comput.}, 33:250--278, 2001.


\bibitem{Toh}
S.~Yun and K.-C.~Toh. A coordinate gradient descent method for $l_1$-regularized convex minimization.
{\em Comput. Optim. Appl.}, 48:273--307, 2011.


\end{thebibliography}
\end{document}